\documentclass{amsart}  \textwidth=14.5cm \oddsidemargin=1cm
\usepackage{
    amsmath,
    amsfonts,
    amssymb,
    amsthm,
    amscd,
    comment,
    enumitem,
    etoolbox,
    textcomp,   
    gensymb,    
    mathtools,
    mathdots,
    stmaryrd    
}
\usepackage[dvipsnames]{xcolor}
\usepackage{tipa}

\usepackage[T1]{fontenc}
\usepackage{dsfont}             
\usepackage[colorlinks=true, linkcolor=blue, citecolor=blue, urlcolor=blue, breaklinks=true]{hyperref}

\usepackage{wasysym}
\usepackage{xypic,makecell,hhline}

\DeclareFontFamily{OT1}{pzc}{}
\DeclareFontShape{OT1}{pzc}{m}{it}{<-> s * [1.10] pzcmi7t}{}
\DeclareMathAlphabet{\mathpzc}{OT1}{pzc}{m}{it}

\usepackage{zref-clever}

\zcsetup{
  cap,                      
  nameinlink=false,         
  lastsep = {, and }        
}

\zcRefTypeSetup{assumption}{
    Name-sg = Assumption ,
    name-sg = assumption ,
    Name-pl = Assumptions ,
    name-pl = assumptions ,
}
\zcRefTypeSetup{cor}{
    Name-sg = Corollary ,
    name-sg = corollary ,
    Name-pl = Corollaries ,
    name-pl = corollaries ,
}
\zcRefTypeSetup{defin}{
    Name-sg = Definition ,
    name-sg = definition ,
    Name-pl = Definitions ,
    name-pl = definitions ,
}
\zcRefTypeSetup{enumi}{
    Name-sg = {} ,
    name-sg = {} ,
    Name-pl = {} ,
    name-pl = {} ,
}
\zcRefTypeSetup{equation}{
    Name-sg = {} ,
    name-sg = {} ,
    Name-pl = {} ,
    name-pl = {} ,
}
\zcRefTypeSetup{eg}{
    Name-sg = Example ,
    name-sg = example ,
    Name-pl = Examples ,
    name-pl = examples ,
}
\zcRefTypeSetup{lem}{
    Name-sg = Lemma ,
    name-sg = lemma ,
    Name-pl = Lemmas ,
    name-pl = lemmas ,
}
\zcRefTypeSetup{prop}{
    Name-sg = Proposition ,
    name-sg = proposition ,
    Name-pl = Propositions ,
    name-pl = propositions ,
}
\zcRefTypeSetup{rem}{
    Name-sg = Remark ,
    name-sg = remark ,
    Name-pl = Remarks ,
    name-pl = remarks ,
}
\zcRefTypeSetup{theo}{
    Name-sg = Theorem ,
    name-sg = theorem ,
    Name-pl = Theorems ,
    name-pl = theorems ,
}

\zcsetup{
  countertype = {
    enumi=item, enumii=item, enumiii=item, enumiv=item
  },
  counterresetby = {
    enumii=enumi, enumiii=enumii, enumiv=enumiii
  }
}
\zcRefTypeSetup{item}{
    Name-sg = {} ,
    name-sg = {} ,
    Name-pl = {} ,
    name-pl = {} ,
}

\AddToHook{env/assumption/begin}{\zcsetup{countertype={theo=assumption}}}
\AddToHook{env/cor/begin}{\zcsetup{countertype={theo=cor}}}
\AddToHook{env/defin/begin}{\zcsetup{countertype={theo=defin}}}
\AddToHook{env/eg/begin}{\zcsetup{countertype={theo=eg}}}
\AddToHook{env/lem/begin}{\zcsetup{countertype={theo=lem}}}
\AddToHook{env/prop/begin}{\zcsetup{countertype={theo=prop}}}
\AddToHook{env/rem/begin}{\zcsetup{countertype={theo=rem}}}

\newcommand{\cref}[1]{\zcref{#1}}
\newcommand{\Cref}[1]{\zcref[S]{#1}}

\makeatletter
\def\namedlabel#1#2{\begingroup
    (#2)%
    \def\@currentlabel{(#2)}%
    \phantomsection\label{#1}\endgroup
}
\makeatother

\newcommand\C{\mathbb{C}}
\newcommand\N{\mathbb{N}}

\newcommand\Z{\mathbb{Z}}
\newcommand\kk{\Bbbk}

\newcommand{\ck}{\mathfrak{k}}

\newcommand{\ns}{\text{ns}}

\newcommand{\Ub}{\Ualg_{\bu}}
\newcommand{\wt}{\text{wt}}

\newcommand{\Fi}{\mathcal F_\io}

\newcommand{\Rem}{\text{Rem}}
\newcommand{\tUi}{\widetilde{\Ualg}^\io}
\newcommand{\tB}{\tilde{B}}

\newcommand{\fg}{\mathfrak{g}}
\newcommand{\fh}{\mathfrak{h}}
\newcommand{\fgl}{\mathfrak{gl}}

\newcommand{\osp}{\mathfrak{osp}}

\newcommand{\Ieven}{I_{\overline{0}}}
\newcommand{\Iodd}{I_{\overline{1}}}

\newcommand{\io}{\imath}
\newcommand{\va}{\varsigma}

\newcommand{\Ualg}{\bU}
\newcommand{\Ui}{\Ualg^\imath}

\newcommand{\Uio}{\Ualg^{\imath0}}

\DeclareMathOperator{\End}{End}

\DeclareMathOperator{\id}{id}

\usepackage{tikz}
\usetikzlibrary{arrows.meta,patterns}
\usetikzlibrary{decorations.markings,decorations.pathreplacing}
\usepackage{tikz-cd}

\tikzset{multi/.style={very thick}}
\tikzset{
    anchorbase/.style={
        >=To,
        line cap = round, line join = round,
        baseline={([yshift=-0.5ex]current bounding box.center)},
    }
}
\tikzset{
    centerzero/.style={
        >=To,
        line cap = round, line join = round,
        baseline={([yshift=-0.5ex](#1))},
        },
    centerzero/.default={0,0}
}
\tikzset{   
    wipe/.style={
        white,
        line cap = butt,    
        line width=4pt
    }
} 
\tikzset{
    over/.style={
        preaction={
            draw=white,
            line cap = butt,    
            line width=4pt,
            -{},                
        }
    },
}
\tikzset{->-/.style={decoration={
    markings,
    mark=at position #1 with {\arrow{>}}},postaction={decorate}}
}
\tikzset{-<-/.style={decoration={
    markings,
    mark=at position #1 with {\arrow{<}}},postaction={decorate}}
}

\tikzset{
    circ/.style={
        draw,
        circle,
        minimum size=0.7em,
        inner sep=0pt,
    }
}
\tikzset{
    circslash/.style={
        draw,
        circle,
        minimum size=0.7em,
        inner sep=0pt,
        append after command={
            \pgfextra{
                \draw (\tikzlastnode.south west) -- (\tikzlastnode.north east);
            }
        }
    }
}
\tikzset{
    circcross/.style={
        draw,
        circle,
        minimum size=0.7em,
        inner sep=0pt,
        append after command={
            \pgfextra{
                \draw (\tikzlastnode.south west) -- (\tikzlastnode.north east);
                \draw (\tikzlastnode.south east) -- (\tikzlastnode.north west);
            }
        }
    }
}
\tikzset{
    circblack/.style={
        draw=black,
        fill=black,
        circle,
        minimum size=0.7em,
        inner sep=0pt,
    }
}

\newtheorem{theo}{Theorem}[section]

\newtheorem{prop}[theo]{Proposition}
\newtheorem{lem}[theo]{Lemma}

\theoremstyle{definition}

\newtheorem{defin}[theo]{Definition}
\newtheorem{rem}[theo]{Remark}
\newtheorem{eg}[theo]{Example}

\numberwithin{equation}{section}
\allowdisplaybreaks

\setenumerate[1]{label=(\alph*)}          

\newtoggle{comments}
\newtoggle{details}
\newtoggle{detailsnote}

\toggletrue{comments}   
\toggletrue{details}   

\iftoggle{comments}{%

    \newcommand{\question}[1]{
        \ \\
        {\color{blue}
            \textbf{Question:} #1
        }
        \ \\
    }
    }{%
        \newcommand{\acomments}[1]{}
        \newcommand{\ycomments}[1]{}
        \newcommand{\question}[1]{}
    }

\iftoggle{details}{%
    \newcommand{\details}[1]{
        \ \\
        {\color{OliveGreen}
            \textbf{Details:} #1
        }
        \\
    }
}{%
    \newcommand{\details}[1]{\ignorespaces}
}

\newcommand{\I}{I}

\newcommand{\bu}{\bullet}
\newcommand{\bU}{\mathbf{U}}

\newcommand{\gl}{\mathfrak{gl}}

\newcommand{\Vset}{I_V}
\newcommand{\qbinom}[2]{\begin{bmatrix} #1\\#2 \end{bmatrix} }

\begin{document}

\title{A Serre-type presentation for $\imath$quantum supergroups of type A}

\author{Weideng Cui}
\address[Weideng Cui]{School of Mathematics, Shandong University, Jinan, Shandong 250100, China} 
\email{cwdeng@amss.ac.cn}

\author{Yaolong Shen}
\address[Yaolong Shen]{School of Mathematical Sciences, Key Laboratory of MEA (Ministry of Education)
\& Shanghai Key Laboratory of PMMP, East China Normal University, Shanghai 200241,
China}
\email{yaolongshen3555@gmail.com}

\maketitle

\begin{abstract}
We establish a Serre-type presentation for $\imath$quantum enveloping superalgebras arising from quantum supersymmetric pairs of type A. We first prove, in arbitrary type, that evaluating a quantum Serre polynomial
on the coideal generators produces a correction of strictly smaller weight
in the natural filtration using the projection technique. We then compute these remainder terms for all type A super Satake diagrams.  In particular, we determine the new degree-four relations associated with isotropic odd simple roots of the relevant local Satake diagrams.
\end{abstract}

\tableofcontents

\section{Introduction}

Quantum symmetric pairs are quantum analogues of symmetric pairs of Lie
algebras. Algebraically, they consist of a quantum group $\Ualg$ together
with a distinguished right coideal subalgebra $\Ui$, usually called an
$\imath$quantum group.  The finite-type theory was developed systematically
by Letzter \cite{Let99,Let02} and extended to the Kac--Moody setting by Kolb
\cite{Kol14}. It has since become a meeting point of quantum groups and
representation theory: $\imath$quantum groups
support analogues of canonical bases, Schur dualities, intertwining
operators, Hall algebra realizations, categorifications, and current
presentations; see \cite{Wan23} for a survey. Explicit generators and
relations are basic to all of these directions. They provide
the starting point for studying its representations and symmetries.

The super setting introduces phenomena that are absent from the ordinary
theory. A Lie superalgebra may admit many inequivalent choices of simple
roots, and the parity of those roots affects both the signs and the form of
the defining relations. For $\fg=\fgl(m|n)$, the quantum enveloping
superalgebra associated with an arbitrary Dynkin diagram was constructed by
Yamane \cite{Yam94,Yam99}. Besides the usual quadratic and cubic quantum
Serre relations, its presentation contains higher-order relations centered
at isotropic odd simple roots. 

Quantum supersymmetric pairs of type~sAIII were introduced in \cite{CL23,She25},
where an $\imath$Schur duality with the Hecke algebra of type~B and the
corresponding quasi $K$-matrix were established. The construction was
subsequently extended in \cite{SW25} to all basic Lie superalgebras; see also \cite{AMS25}. Further developments based on quantum supersymmetric pairs can be found in \cite{L25,L26,SSS25,SZ26}. In these frameworks a super Satake diagram
$(\I=\I_\circ\sqcup\I_\bu,\tau)$ determines a right coideal subalgebra
$\Ui\subseteq\Ualg$ generated by the Levi part, a toral subalgebra, and
elements
\[
 B_i=F_i+\va_iT_{w_\bu}(E_{\tau i})K_i^{-1}+\kappa_iK_i^{-1},
 \qquad i\in\I_\circ.
\]
The quantum Iwasawa decomposition and a PBW-type basis theorem proved in
\cite{SW25} imply the existence of a presentation, but they do not by
themselves identify the lower-order corrections to the Serre relations.
Determining those corrections explicitly is the problem addressed in this
paper.

The underlying difficulty can be stated succinctly. If $p$ is a quantum
Serre polynomial for $\Ualg$, then $p(\underline F)=0$, whereas
$p(\underline B)$ need not vanish. Its leading filtered component does
vanish, so $p(\underline B)$ is a linear combination of monomials of strictly
smaller weight.  In the ordinary theory, Letzter and Kolb developed a
projection technique that extracts these lower-weight terms from the
coproduct \cite{Let02,Kol14}. We adapt this mechanism to quantum
supersymmetric pairs and use it systematically. For a Serre polynomial of
weight $\lambda$, the resulting formula is
\[
 p(\underline B)
 =-(\id\otimes\epsilon)
   \bigl(\id\otimes(P_{-\lambda}\circ\pi_{0,0})\bigr)
   \bigl(\Delta(p(\underline B))
          -p(\underline B)\otimes K_{-\lambda}\bigr).
\]
Only a small portion of the coproduct can survive this projection, which
turns the presentation problem into a finite local calculation.

Our first result, \cref{presentation}, is a presentation theorem valid for
quantum supersymmetric pairs of arbitrary type. Starting from the algebra
freely generated over $\Ub^+\Uio$ by symbols $\tB_i$, we show that
the kernel of the canonical map to $\Ui$ is generated by elements \cref{Serre1}, \cref{Serre2} and
\[
 p(\tB_{i_1},\tB_{i_2},\ldots, \tB_{i_k})
   -\widetilde{\Rem}^{p}_{i_1,i_2,\ldots,i_k}
\]
for every quantum Serre polynomial $p$. Each remainder has strictly lower
weight, and its formal expression is independent of the admissible
parameters $\kappa_i$. Thus an explicit Serre presentation reduces exactly
to determining these remainders.

We carry out that determination for every super Satake diagram of type~A.
These diagrams comprise the families with $\tau=\id$, including the
orthosymplectic fixed-point case, and the type~sAIII families with
$\tau\ne\id$. For any choice of parities on the type~A Dynkin diagram,
there are four relevant quantum Serre polynomials: the square $p_0(x_i)=x_i^2$ at an
isotropic odd root; the supercommutator $p_1(x_i,x_j)=[x_i,x_j]$ for
nonadjacent nodes; the usual cubic polynomial $p_2(x_i,x_j)$ at an even node $i$;
and the degree-four polynomial
\[
 p_3(x_i,x_j,x_k)
   =[[[x_i,x_j]_{q_j},x_k]_{q_k},x_j]
 \]
when $j$ is isotropic odd and $i\sim j\sim k$. The remainders for $p_0$, $p_1$, and $p_2$ are given in
\cref{prop:remp0,prop:remp1,prop:remp2}. These relations extend the familiar even formulas and at the same time account for the signs and isotropic roots of the super setting. 

The principal new calculation is the degree-four relation in
\cref{prop:remp3}. We classify the possible three-node local Satake
diagrams around an isotropic odd root and compute the remainder in every
case. It vanishes whenever the three-node string has no relevant overlap
with its $\tau$-image, and the complete list of nonzero cases falls into two
types. For type~sAIII, the correction is a linear combination of
lower-degree $q$-commutators multiplied by the coideal parameters and toral
elements.  For diagrams with black boundary nodes, it is a signed scalar
multiple of a neighboring coideal generator times a toral factor. These
relations have no purely even analogue. Together with the preceding three
families, they yield a complete Serre-type presentation for all type~A
$\imath$quantum enveloping superalgebras.

The paper is organized as follows. In \cref{sec:QSP} we review type~A quantum
enveloping superalgebras, classify the relevant super Satake diagrams, and
recall the construction and basis theorem for $\Ui$. In \cref{sec:iQSP} we prove
the general presentation theorem, develop the projection formula, and
determine successively the remainders for $p_0,p_1,p_2$, and $p_3$.

\subsection*{Acknowledgment}
We would like to thank Weinan Zhang for useful discussions. YS is partially supported by the Fields Institute and the Science and Technology Commission of Shanghai Municipality (No. 22DZ2229014).

\section{Quantum supersymmetric pairs of type A}
\label{sec:QSP}
In this section we review the construction of the quantum supersymmetric pairs of type A. Throughout this section, we work over the field $\kk = \C(q)$, where $q$ is an indeterminate. When working with superspaces, we denote the parity of a homogeneous element $v$ by $|v| \in \Z_2$. When we write expressions involving $|v|$, we implicitly assume that $v$ is homogeneous. For an element $a\in \Z_2$, we define 
\[
    (-1)^{a}
    =
    \begin{cases}
        1 & \text{if $a=\overline{0}$}, \\
        -1 & \text{if $a=\overline{1}$}.
    \end{cases}
\]
For a statement $P$, we define
\[
    \delta_P
    :=
    \begin{cases}
        1 & \text{if $P$ is true}, \\
        0 & \text{if $P$ is false}.
    \end{cases}
\]
In particular, we have the usual Kronecker delta $\delta_{ij}=\delta_{ij} := \delta_{i=j}$.   

\subsection{Quantum enveloping superalgebras}
Let $V = V_{\overline{0}} \oplus V_{\overline{1}}$ be a vector superspace over $\C$ with $\dim V_{\overline{0}} = m$, $\dim V_{\overline{1}} = n$.  We fix a homogeneous basis of $V$
\[
    v_i,\qquad i \in \Vset := \left\{1,2,\ldots,m+n \right\}.
\]
The superspace $\End(V)$, equipped with the supercommutator, is a Lie superalgebra, which is called the general linear Lie superalgebra and is denoted by $\fg = \fgl(V) = \fgl(m|n,\C)$.  The Lie superalgebra $\fg$ has a homogeneous basis given by the matrix units $E_{ij}$, $i,j \in \Vset$, where the parity of $E_{ij}$ is 
\[
    |E_{ij}| = |v_i| + |v_j|.
\]
Let $\fh$ be the Cartan subalgebra of $\fg$ consisting of diagonal matrices. Then the dual space $\fh^*$ has a basis $\epsilon_i$, $i \in \Vset$, defined by $\epsilon_i(E_{jj}) = \delta_{ij}$ for $j \in \Vset$. We define a symmetric bilinear form on the weight lattice $P = \oplus_{i \in \Vset} \Z\epsilon_i$ by
\[
    (\epsilon_i, \epsilon_j) =
    (-1)^{|v_i|}\delta_{ij}.
\]
We often write $|\epsilon_i|:=|v_i|$ for any $i\in \Vset$.

Now we set
\begin{equation} \label{Idef}
    I := \left\{ 1,2,\ldots, m+n-1 \right\}
\end{equation}
and denote the set of simple roots by
\begin{equation} \label{simpleroots}
    \Pi= \left\{ \alpha_i= \epsilon_{i}-\epsilon_{i+1} \mid i \in I \right\}.
\end{equation}
For any $i\in I$, we define the parity of $\alpha_i$ by $|\alpha_i|=|\epsilon_i|+|\epsilon_{i+1}|$, and we set $|i| :=|\alpha_i|$. The Dynkin diagram associated to this choice is of the form
\begin{equation} \label{dynkin}
    \begin{tikzpicture}[centerzero,semithick]
        \node (1) [circslash, 
            label={[font=\scriptsize]below:$1$}] at (0,0) {};
        \node (2) [circslash, 
            label={[font=\scriptsize]below:$2$}] at (1.5,0){};
        \node (3) at (3,0) {$\cdots$} ;
        \node (4) [circslash, 
            label={[font=\scriptsize]below:$m+n-2$}] at (4.5,0){};
        \node (5) [circslash, 
            label={[font=\scriptsize]below:$m+n-1$}] at (6,0){};
        \path (1) edge (2)
              (2) edge (3)
              (3) edge (4)
              (4) edge (5);
    \end{tikzpicture}
    \ , \quad \text{where} ~
    \begin{tikzpicture}[centerzero,semithick]
        \node [circslash, 
            label={[font=\scriptsize]below:$i$}] at (0,0) {};
    \end{tikzpicture}
    =
    \begin{cases}
        \begin{tikzpicture}[centerzero,semithick]
            \node [circ, 
                label={[font=\scriptsize]below:$i$}] at (0,0) {};
        \end{tikzpicture}
        & \text{if } |\alpha_i| = \overline{0},
        \\
        \begin{tikzpicture}[centerzero,semithick]
            \node [circcross, 
                label={[font=\scriptsize]below:$i$}] at (0,0) {};
        \end{tikzpicture}
        & \text{if } |\alpha_i| = \overline{1}.
    \end{cases}
\end{equation}

We define the coweight lattice $P^\vee = \oplus_{i\in \Vset} \Z \epsilon^\vee_i$, and we have the $\Z$-bilinear pairing
\[
    \langle \cdot,\cdot \rangle \colon P^\vee \times P\to \Z, \qquad
    \langle \epsilon^\vee_i,\epsilon_j \rangle = \delta_{ij}.
\]
The set of simple coroots is given by
\begin{equation} \label{hdef}
    \Pi^\vee = \{ h_i  \mid i \in I \},\qquad \text{where} ~
	h_i := \epsilon_{i}^\vee - (-1)^{|\alpha_i|} \epsilon_{i+1}^\vee.
\end{equation}
We let
\[
    X := \Z \Pi
    \qquad \text{and} \qquad
    Y := \Z \Pi^\vee
\]
denote the root lattice and coroot lattice, respectively. The generalized Cartan matrix associated to the above data is
\begin{equation}
\label{Cmatrix}
    A=(a_{ij})_{i,j\in I}, \qquad \text{where} ~
    a_{ij} = \langle h_i, \alpha_j \rangle
    = \left( 1 + (-1)^{|\alpha_i|} \right) \delta_{ij} - \delta_{i,j+1} - (-1)^{|\alpha_i|} \delta_{i,j-1}.
\end{equation}
Note that $A$ is symmetrizable, since
\begin{align} \label{eq:di}
    (\alpha_i, \alpha_j) = d_i a_{ij},\qquad \text{where} ~
    d_i := (-1)^{|\epsilon_i|}, \qquad
    \forall i,j \in I.
\end{align}

It will also be convenient for us to introduce the following notation. We will say that $i\neq j\in \I$ are {\em connected} if $a_{ij}\neq 0$ and write $i\sim j$. Likewise, we say that $i\neq j\in \I$ are {\em not connected} if $a_{ij}=0$ and write $i\nsim j$.

Set
\[
    q_i := q^{d_i}, \qquad \text{for} ~i \in I.
\]
Denote the quantum integers and quantum binomial coefficients by
\[
    [a]_{q_{i}} := \frac{q_{i}^a-q_{i}^{-a}}{q_{i}-q_{i}^{-1}},
    \qquad
    \qbinom{a}{k} := \frac{[a]_{q_{i}} [a-1]_{q_{i}} \cdots [a-k+1]_{q_{i}}}{[k]_{q_{i}}[k-1]_{q_{i}}\cdots [1]_{q_{i}}},
    \qquad \text{for} ~a \in \Z,\ k \in \N.
\] 

Following \cite[Th.~10.5.1]{Yam94}, we define the quantum enveloping superalgebra $\Ualg = \bU_q(\fg)$ to be the unital associative superalgebra over $\C(q)$ with generators
\[
    E_i, F_i, \quad \text{for} ~i \in I, \qquad
    K_h,\quad \text{for} ~h \in P^\vee,
\]
and parities
\[
    |E_i| = |F_i| = |\alpha_i|,\quad \text{for} ~i \in I, \qquad
    |K_h| = 0,\quad \text{for} ~h \in P^\vee,
\]
subject to the following relations for $h,h' \in P^\vee$, $i,j \in I$:
\begin{gather}
    K_0 = 1,\qquad
    K_h K_h = K_{h+h'},
	\\ \label{KEF}
    K_h E_i = q^{\langle h, \alpha_i \rangle} E_i K_h,\qquad
    K_h F_i = q^{- \langle h, \alpha_i \rangle} F_i F_h,
    \\ \label{skunk}
    [E_i,F_j]
	= \delta_{ij} \frac{K_i - K_i^{-1}}{q_i-q_i^{-1}},\qquad
	\text{where } K_i := K_{h_i}^{d_i},
    \\ \label{square}
    [E_i,E_j] = 0,\qquad
	[F_i,F_j] = 0,\qquad
	\text{if } a_{ij}=0
    \\ \label{racoon}
    [E_i,[E_i,E_j]_{q_i^{-1}}]_{q_i}= 0 = [F_i,[F_i,F_j]_{q_i^{-1}}]_{q_i},\qquad
	\text{if } |a_{ij}|=1,\ |\alpha_i|=0,
    \\ \label{Serre}
        [[[E_{j-1},E_j]_{q_j},E_{j+1}]_{q_{j+1}}],E_j]=0,\quad
        [[[F_{j-1},F_j]_{q_j},F_{j+1}]_{q_{j+1}}],F_j]=0,
        \text{ if }  1<j<m+n-1,\ |\alpha_j|=1,
\end{gather}
where
\[
    [x,y]_a := xy-(-1)^{|x| |y|} a yx, \quad \text{for} ~a \in \C(q),
    \qquad \text{and} \qquad [x,y] := [x,y]_1.
\]
Then $\Ualg$ is a Hopf superalgebra with the coproduct $\Delta$, counit $\varepsilon$, and antipode $S$ given as follows: for $i \in I$ and $h \in P^\vee$,
\begin{gather} \label{comult}
	\Delta(E_i) = E_i \otimes 1 + K_i \otimes E_i,\quad
	\Delta(F_i) = F_i \otimes K_i^{-1} + 1 \otimes F_i,\quad
	\Delta(K_h) = K_h \otimes K_h,
	\\ \label{counit}
	\varepsilon(E_i) = 0,\qquad
	\varepsilon(F_i) = 0,\qquad
	\varepsilon(K_h) = 1,
	\\ \label{antipode}
	S(E_i) = -K_i^{-1} E_i,\qquad
	S(F_i) = -F_i K_i,\qquad
	S(K_h) = K_{-h}.
\end{gather}

For each $i \in I$ such that $|\alpha_i|=0$, we have an automorphism of superalgebras $T_i \colon \Ualg \to \Ualg$ given by
\begin{equation} \label{Tidef}
	\begin{gathered}
		T_i (E_i) = -F_i K_{i},\qquad
		T_i (F_i) = - K_{i}^{-1}E_i,
		\\
		T_i(E_j) = E_j,\qquad T_i(F_j)=F_j, \qquad \text{for} ~ |j-i|>1,
		\\
		T_i(E_j) = E_i E_j - q^{(\alpha_i,\alpha_j)} E_j E_i,\qquad
		T_i(F_j) = F_i F_j - q^{-(\alpha_i,\alpha_j)} F_j F_i,
		\qquad \text{for} ~ |j-i|=1,
		\\
		T_i (K_h) = K_{s_i(h)},\quad \text{for} ~h \in Y,
	\end{gathered}
\end{equation}
where $s_i$ is the reflection corresponding to the simple root $\alpha_i$. The $T_i$'s are super versions of the automorphisms $T_{i,1}''$ of \cite[\S 37.1.3]{Lus10} and satisfy braid group relations; see \cite[Prop.~7.4.1]{Yam99} and \cite[Th.~3.4]{She25}. Let $\psi \colon \Ualg\to \Ualg$ denote the $\C$-algebra automorphism, often called the \emph{bar involution}, determined by
\begin{equation} \label{Ualgbar}
    \psi(E_i) = E_i,\quad 
    \psi(F_i) = F_i,\quad
    \psi(K_i) = K_i^{-1},\quad
    \psi(q) = q^{-1}.
\end{equation}

As in \cite[Lem. 2.1.4]{Yam94}, the multiplication gives a vector superspace isomorphism (a triangular decomposition of $\Ualg$)
\begin{equation} \label{eq:triangle}
	\Ualg^+ \otimes \Ualg^0 \otimes \Ualg^- \cong \Ualg,
\end{equation}
where  $\Ualg^{+}$ (resp. $\Ualg^{-}$) denotes the subalgebra of $\Ualg$ generated by $E_j $ (resp. $F_j $), $j \in I$, and $\Ualg^{0}$ denotes the subalgebra of $\Ualg$ generated by $K_\mu$, $\mu \in P^\vee$. Let $\C(q)[X]$ be the group algebra of the root lattice $X$. There exists an inclusion of algebras $\C(q)[X]\hookrightarrow {\Ualg^0}$ such that $\alpha_i\mapsto K_i$, and hence we may also write
\[K_\beta:=\prod_{i\in \I}K_i^{n_i},
\qquad
\text{for } \beta=\sum_{i\in \I}n_i\alpha_i \in X.
\]
As in \cite[Sect. 4.3]{SW25}, the triangular decomposition \cref{eq:triangle} induces an isomorphism of vector spaces
\begin{equation}
    \Ualg^+\otimes \Ualg^0\otimes S(\Ualg^-)\cong \Ualg.
\end{equation}
This leads to a direct sum decomposition 
\begin{equation}\label{eq:projectionP}
    \Ualg=\bigoplus_{h \in P^\vee}\Ualg^+K_h S(\Ualg^-).
\end{equation}
For any $h\in P^\vee$, let 
\begin{gather}
\label{eq:Pprojection}
P_h:\Ualg \longrightarrow \Ualg^+K_hS(\Ualg^-)
\end{gather}
denote the projection with respect to \cref{eq:projectionP}. We also use the symbol $P_\lambda$ for $\lambda\in X$ to denote the projection $P_\lambda:\Ualg \to \Ualg^+K_{\lambda} S(\Ualg^-)$ as above.
We have that 
\begin{equation}
\label{eq:deltaP}
    \Delta\circ P_h(x)=(\id\otimes P_h)\Delta(x),\ \forall h\in P^\vee, x\in \Ualg.
\end{equation}

On the other hand, let $X^+:=\Z_{\geq 0}X$. We denote
 \[ 
 \Ualg^\pm_\beta:=\{u\in \Ualg^\pm\mid K_iuK_i^{-1}=q^{(\alpha_i,\beta)}u,\ \forall i\in \I\},
 \qquad \text{ for any } \beta\in X.
 \]
Then, with respect to the decomposition 
\begin{equation}
    \Ualg=\bigoplus_{\alpha,\beta\in X^+}\Ualg_\alpha^+\Ualg^0\Ualg_{-\beta}^-,
\end{equation}
we have the following projections
\begin{align}
    \label{eq:piprojection}
    \pi_{\alpha,\beta}:\Ualg\longrightarrow \Ualg_\alpha^+\Ualg^0\Ualg_{-\beta}^-.
\end{align} 
Moreover, for any $\mu,\gamma\in \fh^*$, we write $\mu>\gamma$ if $\mu-\gamma\in X^+$ and $\mu\neq \gamma$.

\subsection{Super Satake diagrams of type A}
 In this subsection, we review the notion of super Satake diagrams given in \cite[Sect. 2.2]{SW25}. Let \(\I=\Ieven \sqcup \Iodd\), where $\Ieven$ (resp. $\Iodd$) consists of $i\in \I$ such that $|i|=\overline{0}$ (resp. $|i|=\overline{1}$). 
 
For a subset $\I_\bu\subset \Ieven$, we denote by \(\fg_\bu\) the semisimple Lie subalgebra of \(\fg\) associated with \(\I_\bu\), and by $\Phi_{\bu}\subset \Phi$ the corresponding root system, where $\Phi$ is the root system associated to the Cartan subalgebra $\fh$. Denote by \(W_\bu\) the Weyl group of \(\fg_\bu\) with the longest element \(w_\bu\). We regard \(\fg_\bu\) as a Levi subalgebra of \(\fg\), and therefore we have the positive coroots for \(\fg_\bu\) as part of coroots for \(\fg\). Let $\rho_\bu^\vee$ denote half the sum of the positive coroots in $\Phi_{\bu}$. 

\begin{defin} (\cite[Def. 2.3]{SW25})
\label{def:superad}
A pair $(\I=\I_\circ \sqcup\I_\bu,\tau)$ consisting of subsets $\I_\bu\subset \Ieven$, $I_\circ=\I\backslash \I_\bu$, and a parity-preserving permutation $\tau$ of $\I$ is called {\em super admissible} if the following conditions are satisfied:
\begin{enumerate}
    \item[(1)] $\tau^2=\id$,
    \item[(2)] 
     $\tau(\I_\bu)=\I_\bu$, and the action of $\tau$ on $\I_\bu$ coincides with the action of $-w_\bu$,
    \item[(3)]
    If $j\in \Ieven\cap \I_\circ$ and $\tau(j)=j$, then $\langle \rho_\bu^\vee,\alpha_j\rangle\in \Z$, 
    \item[(4)]
    If $j\in  \Iodd\cap\I_\circ$ and $\tau(j)=j$, then
$\langle \rho_\bu^\vee,\alpha_j\rangle\neq 0$.
\end{enumerate}
\end{defin}
\begin{rem}
Our \cref{def:superad} differs slightly from \cite[Def.~2.3]{SW25}.  
Indeed, since we only consider type~A Lie superalgebras, non-isotropic odd simple roots do not occur in our setting.
\end{rem}
We shall encode super admissible pairs by diagrams, called \emph{super Satake diagrams}, and we will use the terms ``super admissible pairs'' and ``super Satake diagrams'' interchangeably. In these diagrams, we will use \(\newmoon\) to represent simple even roots in \(\I_\bu\).

For later use, we clarify that subdiagrams of super Satake diagrams are understood in a strict sense. Namely, if \(\Gamma\) is a subdiagram of a super Satake diagram \(\Gamma'\) and \(i\in \I_\circ\) is a white node of \(\Gamma\), then \(\Gamma\) is required to contain every connected component of black nodes of \(\Gamma'\) that is connected to \(i\).

Various examples of super Satake diagrams of general type can be found in \cite{SW25}. In type~A, there are only two cases, according to whether \(\tau=\id\) or not. When $\tau=\id$, all possible super Satake diagrams of type A are explored in detail in \cite[Ex. 2.16]{SW25}. We briefly recall the possibilities here. For this purpose, we introduce the following shorthand notations for local diagrams.
{\rm
\[
{}_l\XBox :=\xy
(-5,0)*{\fullmoon};(0,0)*{\fullmoon}**\dir{-};(5,0)*{\cdots}**\dir{-};(10,0)*{\fullmoon}**\dir{-};(15,0)*{\otimes}**\dir{-};
(-5,-4)*{\scriptstyle l};(0,-4)*{\scriptstyle l-1};(10,-4)*{\scriptstyle 2};(15,-4)*{\scriptstyle 1};
\endxy\ ,
\quad
\XBox_k :=\xy
(-5,0)*{\otimes};(0,0)*{\fullmoon}**\dir{-};(5,0)*{\cdots}**\dir{-};(10,0)*{\fullmoon}**\dir{-};(15,0)*{\fullmoon}**\dir{-};
(-5,-4)*{\scriptstyle 1};(0,-4)*{\scriptstyle 2};(10,-4)*{\scriptstyle k-1};(15,-4)*{\scriptstyle k};
\endxy\ ,
\quad
\davidsstar_t :=\xy
(-5,0)*{\otimes};(0,0)*{\fullmoon}**\dir{-};(5,0)*{\cdots}**\dir{-};(10,0)*{\fullmoon}**\dir{-};(15,0)*{\otimes}**\dir{-};
(0,-4)*{\scriptstyle 1};(10,-4)*{\scriptstyle t-1};
\endxy
\]
}for $l,k,t\geq 1$. For example, we have {\rm$\davidsstar_1= \xy
(0,0)*{\otimes};(5,0)*{\otimes}**\dir{-};
\endxy$} and {\rm$\XBox_1={}_1\XBox= \otimes$}. We also set {\rm$\XBox_0={}_0\XBox=\varnothing$} (which are to be removed) and {\rm$\davidsstar_0= \fullmoon$} (the even split rank one Satake diagram). 

Consider the following diagram
{\rm{
\begin{equation}
    \label{eq:splitA}
\xy
(0,0)*{{}_l\XBox};(8,0)*{\newmoon}**\dir{-};(16,0)*{\davidsstar_{n_1}}**\dir{-};(24,0)*{\newmoon}**\dir{-};(32,0)*{\davidsstar_{n_2}}**\dir{-};(40,0)*{\newmoon}**\dir{-};(47,0)*{\cdots}**\dir{-};(56,0)*{\davidsstar_{n_a}}**\dir{-};(64,0)*{\newmoon}**\dir{-};(72,0)*{\XBox_k}**\dir{-};
\endxy
\end{equation} }}
with the underlying Lie superalgebra \[\fg=\gl(l|2|n_1|2|n_2|2|\cdots|n_a|2|k) \cong \gl(l +n_1 + \cdots + n_a +k|2a+2)\] and $\tau=\id$. Notably, in this case, the fixed point subalgebra is $\osp(l +n_1 + \cdots + n_a +k|2a+2)$.

When \(\tau\neq \id\), the corresponding super Satake diagrams are of type sAIII, which is the most fundamental case and has been extensively studied so far; see \cite{She25,SZ26}. In this case, the super Satake diagrams are of the following forms 
\begin{equation} \label{AIIIdiagram}
    \begin{tikzpicture}[anchorbase,scale=1, semithick]
        \node (-4) [circslash, 
            ] at (0,1) {};
        \node (-3)  at (2,1) {$\cdots$};
        \node (-2) [circslash, 
            ] at (4,1) {};
        \node (-1) [circblack, 
            ] at (5,0.7) {};
        \coordinate (0) at (5,0);
        \node (1) [circblack, 
            ] at (5,-0.7) {};
        \node (2) [circslash, 
           ] at (4,-1) {};
		\node (3)  at (2,-1){$\cdots$};
        \node (4) [circslash, 
           ] at (0,-1) {};
        \path (-4) edge (-3)
            (-3) edge (-2)
            (-2) edge (-1)
            (-1) edge (0)
            (0) edge (1)
            (1) edge (2)
            (2) edge (3)
            (3) edge (4);
        \path (-4) edge[dashed,bend right,<->] (4)
            (-2) edge[dashed,bend right,<->] (2)
            (-3) edge[dashed,bend right,<->] (3);
        \node[fill=white, inner xsep=1pt, inner ysep=-3pt] at (0) {$\raisebox{1.3ex}{$\vdots$}$};
	\end{tikzpicture},\qquad
    \begin{tikzpicture}   [anchorbase,scale=1, semithick]     
    \node (-4) [circslash, 
            ] at (0,.5) {};
        \node (-3)  at (2,.5) {$\cdots$};
        \node (-2) [circslash, 
            ] at (4,.5) {};
        \node (0) [circ
            ] at (5,0) {};
        \node (2) [circslash, 
           ] at (4,-.5) {};
		\node (3)  at (2,-.5){$\cdots$};
        \node (4) [circslash, 
           ] at (0,-.5) {};
        \path (-4) edge (-3)
            (-3) edge (-2)
            (-2)  edge (0)
            (0)  edge (2)
            (2) edge (3)
            (3) edge (4);
        \path (-4) edge[dashed,bend right,<->] (4)
            (-2) edge[dashed,bend right,<->] (2)
            (-3) edge[dashed,bend right,<->] (3);
    \end{tikzpicture}
\end{equation}
where
\(
\begin{tikzpicture}[centerzero,semithick]
    \node [circslash] at (0,0) {};
\end{tikzpicture}
\)
represents a simple root in \(\I_\circ\) of either parity, \(\fullmoon\) represents an even simple root in \(\I_\circ\), and \(\newmoon\) represents a simple root in \(\I_\bu\), which is required to be even by \cref{def:superad}. We also note that, in the diagram on the right-hand side of \cref{AIIIdiagram}, the \(\tau\)-fixed node must be an even simple root by \cref{def:superad}(3) and (4). To summarize, \cref{eq:splitA} and \cref{AIIIdiagram} together exhaust all possible super Satake diagrams of type~A.
\begin{eg}
	Suppose $m=4$, $n=2$. All the possible super Satake diagrams with $\tau \neq \id$ are as follows:
	\begin{gather*}
      \begin{tikzpicture}
            \node (-2) [circcross,
                label={below:{$1$}}] at (-2,0) {};
            \node (-1) [circ,
                label={below:{$2$}}] at (-1,0) {};
            \node (0) [circ,
                label={below:{$3$}}] at (0,0) {};
            \node (1) [circ,
                label={below:{$4$}}] at (1,0) {};
            \node (2) [circcross,
                label={below:{$5$}}] at (2,0) {};
            \path (-2) edge (-1)
                (-1) edge (0)
                (0) edge (1)
                (1) edge (2);
            \draw[<->, dashed, bend left=15] (-2, .3) to (2, .3); 
            \draw[<->, dashed, bend left=15] (-1, .3) to (1, .3); 
        \end{tikzpicture}
        \qquad
		\begin{tikzpicture}
            \node (-2) [circcross,
                label={below:{$1$}}] at (-2,0) {};
            \node (-1) [circcross,
                label={below:{$2$}}] at (-1,0) {};
            \node (0) [circ,
                label={below:{$3$}}] at (0,0) {};
            \node (1) [circcross,
                label={below:{$4$}}] at (1,0) {};
            \node (2) [circcross,
                label={below:{$5$}}] at (2,0) {};
            \path (-2) edge (-1)
                (-1) edge (0)
                (0) edge (1)
                (1) edge (2);
            \draw[<->, dashed, bend left=15] (-2, .3) to (2, .3); 
            \draw[<->, dashed, bend left=15] (-1, .3) to (1, .3); 
        \end{tikzpicture}
        \qquad
		\begin{tikzpicture}
            \node (-2) [circ,
                label={below:{$1$}}] at (-2,0) {};
            \node (-1) [circcross,
                label={below:{$2$}}] at (-1,0) {};
            \node (0) [circ,
                label={below:{$3$}}] at (0,0) {};
            \node (1) [circcross,
                label={below:{$4$}}] at (1,0) {};
            \node (2) [circ,
                label={below:{$5$}}] at (2,0) {};
            \path (-2) edge (-1)
                (-1) edge (0)
                (0) edge (1)
                (1) edge (2);
            \draw[<->, dashed, bend left=15] (-2, .3) to (2, .3); 
            \draw[<->, dashed, bend left=15] (-1, .3) to (1, .3); 
        \end{tikzpicture}
    \\
        \begin{tikzpicture}
            \node (-2) [circcross,
                label={below:{$1$}}] at (-2,0) {};
            \node (-1) [circ,
                label={below:{$2$}}] at (-1,0) {};
            \node (0) [circblack,
                label={below:{$3$}}] at (0,0) {};
            \node (1) [circ,
                label={below:{$4$}}] at (1,0) {};
            \node (2) [circcross,
                label={below:{$5$}}] at (2,0) {};
            \path (-2) edge (-1)
                (-1) edge (0)
                (0) edge (1)
                (1) edge (2);
            \draw[<->, dashed, bend left=15] (-2, .3) to (2, .3); 
            \draw[<->, dashed, bend left=15] (-1, .3) to (1, .3); 
        \end{tikzpicture}
		\qquad
		\begin{tikzpicture}
            \node (-2) [circcross,
                label={below:{$1$}}] at (-2,0) {};
            \node (-1) [circcross,
                label={below:{$2$}}] at (-1,0) {};
            \node (0) [circblack,
                label={below:{$3$}}] at (0,0) {};
            \node (1) [circcross,
                label={below:{$4$}}] at (1,0) {};
            \node (2) [circcross,
                label={below:{$5$}}] at (2,0) {};
            \path (-2) edge (-1)
                (-1) edge (0)
                (0) edge (1)
                (1) edge (2);
            \draw[<->, dashed, bend left=15] (-2, .3) to (2, .3); 
            \draw[<->, dashed, bend left=15] (-1, .3) to (1, .3); 
        \end{tikzpicture}
        \qquad
		\begin{tikzpicture}
            \node (-2) [circ,
                label={below:{$1$}}] at (-2,0) {};
            \node (-1) [circcross,
                label={below:{$2$}}] at (-1,0) {};
            \node (0) [circblack,
                label={below:{$3$}}] at (0,0) {};
            \node (1) [circcross,
                label={below:{$4$}}] at (1,0) {};
            \node (2) [circ,
                label={below:{$5$}}] at (2,0) {};
            \path (-2) edge (-1)
                (-1) edge (0)
                (0) edge (1)
                (1) edge (2);
            \draw[<->, dashed, bend left=15] (-2, .3) to (2, .3); 
            \draw[<->, dashed, bend left=15] (-1, .3) to (1, .3); 
        \end{tikzpicture}
        \\
           \begin{tikzpicture}
            \node (-2) [circcross,
                label={below:{$1$}}] at (-2,0) {};
            \node (-1) [circblack,
                label={below:{$2$}}] at (-1,0) {};
            \node (0) [circblack,
                label={below:{$3$}}] at (0,0) {};
            \node (1) [circblack,
                label={below:{$4$}}] at (1,0) {};
            \node (2) [circcross,
                label={below:{$5$}}] at (2,0) {};
            \path (-2) edge (-1)
                (-1) edge (0)
                (0) edge (1)
                (1) edge (2);
            \draw[<->, dashed, bend left=15] (-2, .3) to (2, .3); 
            \draw[<->, dashed, bend left=15] (-1, .3) to (1, .3); 
        \end{tikzpicture}
	\end{gather*}
    Suppose $m=4$ and $n=1$. Then all the possible super Satake diagrams with $\tau=\id$ are as follows:
    \begin{gather*}
         \begin{tikzpicture}
            \node (-2) [circcross,
                label={below:{$1$}}] at (-2,0) {};
            \node (-1) [circblack,
                label={below:{$2$}}] at (-1,0) {};
            \node (0) [circ,
                label={below:{$3$}}] at (0,0) {};
            \node (1) [circblack,
                label={below:{$4$}}] at (1,0) {};
            \path (-2) edge (-1)
                (-1) edge (0)
                (0) edge (1);
        \end{tikzpicture}
        \qquad
            \begin{tikzpicture}
            \node (-2) [circblack,
                label={below:{$1$}}] at (-2,0) {};
            \node (-1) [circ,
                label={below:{$2$}}] at (-1,0) {};
            \node (0) [circblack,
                label={below:{$3$}}] at (0,0) {};
            \node (1) [circcross,
                label={below:{$4$}}] at (1,0) {};
            \path (-2) edge (-1)
                (-1) edge (0)
                (0) edge (1);
        \end{tikzpicture}
        \qquad
          \begin{tikzpicture}
            \node (-2) [circblack,
                label={below:{$1$}}] at (-2,0) {};
            \node (-1) [circcross,
                label={below:{$2$}}] at (-1,0) {};
            \node (0) [circcross,
                label={below:{$3$}}] at (0,0) {};
            \node (1) [circblack,
                label={below:{$4$}}] at (1,0) {};
            \path (-2) edge (-1)
                (-1) edge (0)
                (0) edge (1);
        \end{tikzpicture}
    \end{gather*}
\end{eg}

\subsection{$\imath$Quantum enveloping superalgebras}
In this subsection, we review the construction of the $\imath$quantum enveloping superalgebra, also known as the $\imath$quantum supergroup, associated with a given super Satake diagram
\(
(\I=\I_\circ\cup \I_\bu,\tau),
\)
following \cite[Sect.~4.1]{SW25}. The diagram involution $\tau$ induces an automorphism of $\fg$ given by
\begin{align}
\label{eq:tau}
\begin{split}
\tau \colon\fg & \longrightarrow\fg, 
\\
e_i &\mapsto \begin{cases}
    e_{i} & \text{if }\tau i=i, \\
    (-1)^{|i|}e_{\tau i}&\text{if } \tau i\neq i,
\end{cases}
\qquad 
f_i\mapsto f_{\tau i},\qquad 
h_i\mapsto \begin{cases}
    h_{i} &\text{if } \tau i=i, \\
    (-1)^{|i|}h_{\tau i}&\text{if } \tau i\neq i,
\end{cases}
\end{split}
\end{align}
where $e_i, f_i$, for $i \in I$, are Chevalley generators of $\fg$. It also induces an operator \(\tau\) acting on \(X\) by setting
\[
\tau(\alpha_i)=\alpha_{\tau i},\qquad \text{for} ~i\in \I,
\]
and extending it linearly.
 
We define
\[
Y^\io=\{h\in Y\mid -w_\bu\circ\tau(h)=h\}.
\]
Let $\Uio$ denote the subalgebra generated by $K_h$ for all $ h\in Y^\io$. For an element $w_\bu\in W_\bu$ and each reduced expression $w_\bu=s_{i_1}\cdots s_{i_k}$, we may define $T_{w_\bu}=T_{s_{i_1}}\cdots T_{s_{i_k}}$, which is well-defined since the $T_i$'s, for $i\in \Ieven$, satisfy braid relations.

\begin{defin}(\cite[Def. 4.1]{SW25})
\label{def:Ui}
    The algebra $\Ui$, with parameters $\va_i \in \C(q)^*, \kappa_i\in \C(q), \text{ for }i\in \I_\circ$,
    is the $\C(q)$-subalgebra of $\Ualg$ generated by $\Uio$ together with the following elements:
    \begin{align*}
        B_i=F_i+\va_i T_{w_\bu}(E_{\tau i})K_i^{-1}+\kappa_i K_i^{-1},\ \forall i\in \I_\circ, \qquad
        E_i,\ F_i, \  \forall i\in \I_\bu.
    \end{align*}
\end{defin}
We extend the definition of $B_i$ by setting $B_i=F_i$ for any $i\in I_\bu$. It is shown in \cite[Prop. 4.2]{SW25} that $\Ui$ is a right coideal subalgebra of $\Ualg$, that is, 
\[
\Delta(\Ui)\subset \Ui\otimes \Ualg.
\]

Define 
\begin{equation}
\label{eq:Ins}
    \I_{\ns}=\{i\in \I_\circ\mid \tau i=i,\ \langle h_j,\alpha_i\rangle=0,\ \forall j\in \I_\bu\}.
\end{equation}
Then, in order to ensure the crucial property that $\Ui\cap \Ualg^0=\Uio$, the parameters of $\Ui$ are supposed to satisfy the following conditions (cf. \cite[Sect. 4.2]{SW25}):
\begin{enumerate}
    \item  \(\kappa_i=0 \text{ unless } i\in \I_{\ns} \text{ and } \langle h_k,\alpha_i\rangle\in 2\Z \text{ for all } k\in \I_{\ns}\backslash\{i\}\),
    \item \(\va_i=\va_{\tau i}\) if \((\alpha_i,\alpha_{\tau i})=0\) and \(w_\bu(\alpha_i)=\alpha_i\),
    \item  \(\va_i=\va_{\tau i}\) if $|i|=\overline{1}$ and $(\alpha_i,\alpha_{\tau i})\in 2\Z\backslash \{0\}$.
\end{enumerate}
\begin{rem}
In our type~A setting, every \(i\in \I_{\ns}\) is necessarily an even simple root by \cref{def:superad}(4). Moreover, condition~(3) on the parameters does not arise in our setting. Note that \cite[(4.13)]{SW25} has a typo not excluding $0$. 
For super Satake diagrams of the form \cref{eq:splitA}, we have \(\kappa_i=0\) for all \(i\in I_\circ\), except in the case where the Satake diagram consists of a single node \(\fullmoon\).  
For super Satake diagrams of the form \cref{AIIIdiagram}, we likewise have \(\kappa_i=0\), except possibly when \(i\) is the unique \(\tau\)-fixed node. As we shall see later, the presentation of \(\Ui\) is essentially independent of the parameters \(\kappa_i\). Similar observations have been made in the purely even case; see \cite{Kol14,Wat21}.
\end{rem}
By \cite[Lem. 4.3]{SW25}, for any $i\in I_\bu,\ k\in I$, we have
\begin{equation}
\label{eq:EjBk}
E_iB_k-(-1)^{|i||k|}B_kE_i=\delta_{i,k}\frac{K_i-K_i^{-1}}{q_i-q_i^{-1}}.
\end{equation}

Denote by $\Ualg_{\bu}= \Ualg^+_{\bu}\Ualg^0_{\bu}\Ualg^-_{\bu}$ the quantum group associated with $\I_\bu\subset \Ieven$, where  $\Ualg^+_{\bu},\ \Ualg^-_{\bu}$ and $\Ualg^0_{\bu}$ denote the subalgebras generated by $\{E_i\mid i\in \I_\bu\},\ \{F_i\mid i\in \I_\bu\}$ and $\{K_i^{\pm 1}\mid i\in \I_\bu\}$, respectively. Define a filtration $\Fi^*$ on $\Ui$ by the following degree function specified on generators:
\begin{equation}
    \label{eq:degree}
    \deg (B_i)=1,\ \forall i\in \I,\quad \deg(x)=0,\  \forall x\in \Ub^+\Uio.
\end{equation}
For any $J=(j_1,\ldots,j_r)\in I^r$, we denote $\wt(J)=\sum_{i=1}^r\alpha_{j_i}$ and introduce the following shorthand notations:
\begin{equation*}
E_J=E_{j_1}\cdots E_{j_r},\quad F_J=F_{j_1}\cdots F_{j_r},\quad B_J=B_{j_1}\cdots B_{j_r}.
\end{equation*}
Let $\mathcal{J}$ be a fixed subset of $\cup_{s\in \Z_{\geqslant 0}}I^s$ such that $\{F_J\mid J\in \mathcal{J}\}$ forms a basis for $\Ualg^{-}$. We recall the following basis theorem from \cite[Thm. 4.14]{SW25}.
\begin{theo}
\label{basisthm}
The set $\{B_J \mid J \in \mathcal{J}\}$ forms a basis for the left $\Ualg^{+}_{\bu}\Uio$-module $\Ui$.
\end{theo}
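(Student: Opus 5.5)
The plan follows the Letzter--Kolb approach: spanning by comparison with $\Ualg$, and linear independence by passing to leading terms in the filtration $\Fi^*$. I would first show that $\Ui$ is spanned over $\Ualg^{+}_{\bu}\Uio$ by the $B_J$, $J\in\mathcal J$, and then that these elements are linearly independent over this ring.

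For spanning, note that $\Ui$ is generated by $\Uio$, by $E_i,F_i$ for $i\in\I_\bu$, and by the $B_i$. Since $B_i=F_i$ for $i\in\I_\bu$, it is generated by $\Ub^+\Uio$ together with all $B_i$, $i\in\I$. Using \cref{KEF} for $K_h\in\Uio$ and \cref{eq:EjBk}, any word in these generators can be rewritten with all $\Ub^+\Uio$ factors moved to the left. The correction terms have strictly smaller $B$-degree, so induction on degree shows $\Ui=\sum_J \Ub^+\Uio\, B_J$ over all words $J$. To reduce to $J\in\mathcal J$, write $F_J=\sum_{J'\in\mathcal J}c_{J'}F_{J'}$ in $\Ualg^-$. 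The difference $B_J-\sum c_{J'}B_{J'}$ is then the image of a relation holding among the $F$'s. I need this difference to lie in $\Fi^{r-1}$, where $r=|J|$.

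This is the main obstacle. Expanding $B_J$ via $B_i=F_i+\va_iT_{w_\bu}(E_{\tau i})K_i^{-1}+\kappa_iK_i^{-1}$ and applying $\pi_{*,\wt(J)}$, the pure-$F$ part of the difference vanishes. The remaining terms involve at least one factor $T_{w_\bu}(E_{\tau i})K_i^{-1}$ or $K_i^{-1}$. I would prove, by induction on $r$ using the coideal property $\Delta(\Ui)\subset\Ui\otimes\Ualg$ and the projections $P_h$ together with \cref{eq:deltaP}, that any element of $\Ui$ whose component $\pi_{\alpha,\beta}$ vanishes for all $\beta$ of height $\ge r$ lies in $\Fi^{r-1}$. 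This is the super analogue of \cite[Prop.~6.2]{Kol14}. The only new ingredients are the signs in the super tensor product and the fact that $T_{w_\bu}$ involves only even nodes, so the classical arguments carry over with $(-1)^{|x||y|}$ factors inserted. With this, induction on $r$ finishes spanning.

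For linear independence, suppose $\sum_{J\in\mathcal J} a_J B_J=0$ with $a_J\in\Ub^+\Uio$ not all zero. Let $r$ be the maximal length of a $J$ with $a_J\neq0$. Apply the projection onto $\bigoplus_{\alpha}\Ualg^+_\alpha\Ualg^0\Ualg^-_{-\beta}$ with $\mathrm{ht}(\beta)=r$, using \cref{eq:triangle}. Only the pure $F$-parts of the $B_J$ with $|J|=r$ contribute, and they give $\sum_{|J|=r} a_J F_J$ modulo terms of lower $\Ualg^-$-degree. The factors $a_J$ lie in $\Ualg^+\Ualg^0$ and commute past into the correct slot of the triangular decomposition. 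Since the $F_J$, $J\in\mathcal J$, are linearly independent in $\Ualg^-$, we get $a_J=0$ for $|J|=r$, a contradiction.
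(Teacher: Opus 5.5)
The gap is in the reduction to $J\in\mathcal J$. Your linear-independence argument (compare top $F$-height components and use \cref{eq:triangle}) is correct and standard. So is the first half of your spanning argument, which moves $\Ub^+\Uio$ to the left via \cref{KEF,eq:EjBk}.

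Given that first step and \cref{eq:triangle}, your proposed lemma (vanishing of $\pi_{\alpha,\beta}(x)$ for $\mathrm{ht}(\beta)\ge r$ forces $x\in\Fi^{r-1}$) is just a reformulation of what has to be shown: $\sum_{|J|=r}c_JF_J=0$ implies $\sum_{|J|=r}c_JB_J\in\Fi^{r-1}$. Also, \cite[Prop.~6.2]{Kol14} is the even-case basis theorem itself, so pointing to its super analogue supplies no argument. The tools you name do not prove the lemma either. Running the coproduct argument of \cref{lem:remain} on $y=\sum_{\wt(J)=\lambda}c_JB_J$ only yields $y-P_{-\lambda}(y)\in\sum_{\wt(J')<\lambda}\Ub^+\Uio B_{J'}$, so you still need $P_{-\lambda}(y)=0$. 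The hypothesis $\sum_J c_JF_J=0$ only kills the pure-$F$ part of $P_{-\lambda}(y)$. The mixed terms, built from $F_i$'s together with $\va_iT_{w_\bu}(E_{\tau i})K_i^{-1}$ and $\kappa_iK_i^{-1}$, do not cancel for formal reasons. Their cancellation is exactly \cref{Proj=0}, which rests on explicit computations \cite[Prop.~4.8, 4.11]{SW25} and is available only for quantum Serre polynomials.

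The missing idea is to go through the defining relations of $\Ualg^-$. This is what the cited proof \cite[Thm.~4.14]{SW25} does; compare the spanning step in the proof of \cref{presentation}. By Yamane's presentation \cite{Yam94}, via \cref{eq:triangle}, the kernel of $\C(q)\langle x_i\mid i\in\I\rangle\to\Ualg^-$, $x_i\mapsto F_i$, is the ideal generated by the polynomials in \cref{Serrepoly}. So a homogeneous relation of length $r$ lifts to $\sum_Jc_Jx_J=\sum_k u_k\,p_k\,v_k$, with $u_k,v_k$ homogeneous and each $p_k$ a quantum Serre polynomial. Substituting gives $\sum_Jc_JB_J=\sum_k u_k(\underline B)\,p_k(\underline B)\,v_k(\underline B)$. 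Then \cref{lem:remain} puts each $p_k(\underline B)$ in $\Fi^{\deg p_k-1}$, since in type A $\wt(J)<\lambda$ forces $|J|<\deg p_k$. Hence the sum lies in $\Fi^{r-1}$, and induction on $r$ finishes spanning.

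This is also where the super case is more than inserting signs. The relations of $\Ualg^-$ include $F_i^2=0$ for isotropic $i$ and the quartic relations \cref{Serre}, so \cref{Proj=0} is needed for $p_0$ and $p_3$, which have no even counterpart.
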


\section{A Serre presentation of $\imath$quantum enveloping superalgebras}
\label{sec:iQSP}
\subsection{Quantum Serre polynomials}
For a quantum group, the Serre relations can be presented as $S_{ij}(F_i,F_j) =S_{ij}(E_i,E_j)=0$ in terms of a non-commutative Serre polynomial
\[
S_{ij}(x,y)=\sum_{k=0}^{1-a_{ij}}(-1)^k\qbinom{1-a_{ij}}{k}_{q_i}x^{1-a_{ij}-k}yx^k.
\]
However, according to \cite{Yam94,Yam99}, various higher-order quantum Serre relations will appear for quantum supergroups associated with arbitrary Dynkin diagrams; see \cite[Table~4]{SW25} for the full list of local Dynkin diagrams related to the super admissible pairs and the corresponding Serre relations. In our type~A setting, only the cases (ISO1), (ISO2), (N-ISO), and (AB) from that table are needed. We collect all the possible quantum Serre polynomials here:
\begin{equation}
\label{Serrepoly}
\begin{gathered}
        (i\in \Iodd) \leadsto p_0(x_i)=x_i^2,\qquad (i\nsim j\in \I)  \leadsto p_1(x_i,x_j)=[x_i,x_j], \\ (i\in \Ieven, i\sim j) \leadsto p_2(x_i,x_j)=[x_i,[x_i,x_j]_{q_i^{-1}}]_{q_i}, \\
    (j\in \Iodd, i\sim j\sim k) \leadsto p_3(x_i,x_j,x_k)=[[[x_i,x_j]_{q_j},x_k]_{q_{k}},x_j], 
\end{gathered}
\end{equation}
where the parity of $x_i$ is $|x_i|:=|i|$ for any $i\in \I$. Here we use the \(q\)-commutator notation for the degree four quantum Serre polynomial, as in \cite{Yam94}. In \cite{SW25}, the corresponding quantum Serre relations are written in terms of explicit noncommutative polynomials. These two formulations are equivalent: after expanding the iterated \(q\)-commutators and then substituting the variables with the Chevalley generators of $\Ualg$, one obtains exactly the same polynomial relations as those appearing in \cite{SW25} due to the fact that $F_j^2=0=E_j^2$ whenever $j\in \Iodd$. 

\begin{eg}
    Consider the following super Satake diagram:
    \[
     \begin{tikzpicture}
            \node (-2) [circblack,
                label={below:{$1$}}] at (-2,0) {};
            \node (-1) [circ,
                label={below:{$2$}}] at (-1,0) {};
            \node (0) [circblack,
                label={below:{$3$}}] at (0,0) {};
            \node (1) [circcross,
                label={below:{$4$}}] at (1,0) {};
            \node (2) [circ,
                label={below:{$5$}}] at (2,0) {};
            \path (-2) edge (-1)
                (-1) edge (0)
                (0) edge (1)
                (1) edge (2);
        \end{tikzpicture}
    \]
    We list several quantum Serre polynomials associated with the underlying Dynkin diagram.
    Since $|\alpha_4|=\overline{1}$, then
    \[
    p_0(x_4)=x_4^2,\quad p_3(x_3,x_4,x_5)=[[[x_3,x_4]_{q_4},x_5]_{q_{5}},x_4]
    \]
    are two possible quantum Serre polynomials associated with this diagram. Since $|\alpha_2|=|\alpha_5|=\overline{0}$, we also have 
    \[
    p_2(x_2,x_3)=[x_2,[x_2,x_3]_{q_2^{-1}}]_{q_2},\qquad p_2(x_5,x_4)=[x_5,[x_5,x_4]_{q_5^{-1}}]_{q_5}.
    \]
\end{eg}

\subsection{The projection technique}
The projection technique was first introduced in \cite{Let02,Kol14} for purely even quantum symmetric pairs and was later extended to the super setting in \cite{SW25}. It is a powerful tool for determining the Serre presentation of \(\Ui\), as in \cite[Sect. 5 and 7]{Kol14}. More generally, this technique provides an effective way to compare \(\Ui\) with the ambient quantum group, and it plays an important role in formulating results such as the quantum Iwasawa decomposition from \cite[Thm. 4.15]{SW25}.

Let $p=p(x_{i_1},\ldots,x_{i_\ell})$ be a quantum Serre polynomial in variables $x_{i_t}$ for $i_t\in \I$. To shorten notations we write
\begin{gather*}
    p(\underline{F})=p(F_{i_1},\ldots,F_{i_\ell}),\quad p(\underline{E})=p(E_{i_1},\ldots,E_{i_\ell}),\quad p(\underline{B})=p(B_{i_1},\ldots,B_{i_\ell}),\\
    p\big(\underline{T_{w_\bu}(E_\tau)K^{-1}}\big) =p(T_{w_\bu}(E_{\tau i_1})K_{i_1}^{-1},\ldots, T_{w_\bu}(E_{\tau i_\ell})K_{i_\ell}^{-1}).
\end{gather*}
Since \(p\) is homogeneous, it has a well-defined weight, which we write as
\[
\lambda=\lambda_{1}\alpha_{i_1}+\cdots+\lambda_{\ell}\alpha_{i_\ell},
\quad \text{where} ~ \lambda_t\in \mathbb N.
\]

We collect some results which will be useful in determining the Serre presentation of \(\Ui\). Recall the projections \cref{eq:Pprojection} and \cref{eq:piprojection}.
\begin{lem}
\label{Proj=0}
    Let $p$ be a quantum Serre polynomial of weight $\lambda$. Then we have $P_{-\lambda}(p(\underline{B}))=0$.
\end{lem}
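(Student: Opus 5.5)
We expand $p(\underline{B})$ by multilinearity. Each $B_{i_t}$ is a sum of three terms, $F_{i_t}$, $\va_{i_t}T_{w_\bu}(E_{\tau i_t})K_{i_t}^{-1}$ and $\kappa_{i_t}K_{i_t}^{-1}$; for $i_t\in\I_\bu$ only the first is present. Substituting into the noncommutative polynomial $p$ gives a sum over choices of one summand at each occurrence of each variable. We then locate each resulting monomial in the decomposition \cref{eq:projectionP}, writing it in the form $\Ualg^+K_hS(\Ualg^-)$. The claim is that the only monomials with a component in $\Ualg^+K_{-\lambda}S(\Ualg^-)$ are those in which every factor is an $F$, and that these sum to $p(\underline{F})=0$.

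For the all-$F$ term, observe that $F_{j_1}\cdots F_{j_r}\in \Ualg^-$. Since $S(F_j)=-F_jK_j$, we get $S(\Ualg^-_{-\mu})\subseteq \Ualg^-_{-\mu}K_\mu$, and hence $\Ualg^-_{-\mu}\subseteq S(\Ualg^-)K_{-\mu}=K_{-\mu}S(\Ualg^-)$. So the all-$F$ part $p(\underline{F})$ lies in $\Ualg^+K_{-\lambda}S(\Ualg^-)$, and it vanishes because $p$ is a Serre polynomial for $\Ualg$.

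For a mixed monomial, suppose some set of positions carries $\va T_{w_\bu}(E_{\tau i})K_i^{-1}$ or $\kappa K_i^{-1}$. Let $\beta$ be the total weight of the positions carrying $F$'s, and $\gamma=\lambda-\beta>0$ the weight of the remaining positions. Using the commutation relations \cref{KEF} and the triangular decomposition \cref{eq:triangle}, we move all $\Ualg^+$-parts (note that $T_{w_\bu}(E_{\tau i})\in\Ualg^+$ by the standard property of Lusztig's braid operators on $\Ualg^+$ for $i\in\I_\circ$) to the left and the $K$'s to the middle. The $F$-parts lie in $\Ualg^-_{-\beta}$, and commuting $E$'s past $F$'s only produces, via \cref{skunk}, terms with $E$-degree and $F$-degree lowered simultaneously; those lowered $F$-factors contribute $K$'s of the form $K_{\pm\alpha_j}$. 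Collecting the toral part, we expect each mixed monomial to lie in $\sum_{h}\Ualg^+K_hS(\Ualg^-)$, where $h$ is of the form $-\gamma-\beta'+(\text{corrections})$ with $\beta'\le\beta$. The bookkeeping should show that $K_{-\lambda}$ appears exactly when the full $F$-part survives with weight $\beta$ and the $K$'s contribute $-\gamma$. However, we then need $h\neq-\lambda$, and it is not yet clear that this holds.

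The delicate step is this comparison of toral weights. A cleaner route, which we expect to be the main obstacle to set up but then straightforward, is to apply the coproduct. By \cref{eq:deltaP},
\[
\Delta(P_{-\lambda}(x))=(\id\otimes P_{-\lambda})\Delta(x).
\]
Because $\Delta(\Ui)\subseteq\Ui\otimes\Ualg$ and $\Delta(F_i)=F_i\otimes K_i^{-1}+1\otimes F_i$, we look at the $\Ualg\otimes\Ualg^-$-type component with $\pi_{0,0}$ on the left. Alternatively, we argue by weights, as in \cite[Lem.~5.9]{Kol14} and \cite[Sect.~4.3]{SW25}: the component in $\Ualg^+K_{-\lambda}S(\Ualg^-)$ of a product of the three types of summands is computed multiplicatively on leading terms. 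The point is that $K_i^{-1}$ sits in $K_{-\alpha_i}S(\Ualg^-_0)$ while $F_i$ sits in $K_{-\alpha_i}S(\Ualg^-_{-\alpha_i})$. We would try to show that $P_{-\lambda}$ of a product equals the product of the "$K_{-\alpha}$-leading parts", and that for the $E$-summand $T_{w_\bu}(E_{\tau i})K_i^{-1}$ this leading part carries positive $E$-degree. Since $P_{-\lambda}$ respects $\Ualg^+$-degree, the part of $P_{-\lambda}(p(\underline{B}))$ of $\Ualg^+$-degree zero is $P_{-\lambda}$ of the expansion with $E$-summands removed. This is $p(\underline{F}+\underline{\kappa K^{-1}})$, whose projection onto $K_{-\lambda}$ kills all $\kappa$-terms (they lower the $S(\Ualg^-)$-degree while keeping $K_{-\lambda}$, so by homogeneity they appear in other $\pi$-components) and leaves $p(\underline{F})=0$. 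For positive $E$-degree terms we would verify, using weight considerations, that the $K$-exponent differs from $-\lambda$, since each $E$-summand contributes $K_{-\alpha_i}$ but removes an $F$ that would have contributed a matching $K_{-\alpha_i}$ via $S$, so the exponent becomes $-\lambda+\wt$ with a strictly positive shift. Checking this shift carefully is where I expect the work to lie.
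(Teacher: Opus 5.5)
Your proposal has a genuine gap, and it sits exactly at the ``strictly positive shift'' you planned to verify: there is no such shift.

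All three summands of $B_i$ lie in the same summand of \cref{eq:projectionP}. Besides $F_i\in K_{-\alpha_i}S(\Ualg^-)$ and $\kappa_iK_i^{-1}$, also $\va_iT_{w_\bu}(E_{\tau i})K_i^{-1}\in\Ualg^+K_{-\alpha_i}$, since $T_{w_\bu}(E_{\tau i})\in\Ualg^+$. So a mixed monomial whose $E$-type factors already stand on the left (e.g.\ $\va_iT_{w_\bu}(E_{\tau i})K_i^{-1}F_j$) lies entirely in $\Ualg^+K_{-\lambda}S(\Ualg^-)$. When some $E_t$ has to be moved past an $F_t$, \cref{skunk} produces $(K_t-K_t^{-1})/(q_t-q_t^{-1})$. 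Its $K_t$-part shifts the exponent by $2\alpha_t$, but its $K_t^{-1}$-part stays in the $K_{-\lambda}$-component with smaller $\Ualg^+$-degree. Hence mixed monomials contribute to $P_{-\lambda}(p(\underline B))$ in every $\Ualg^+$-degree, including degree zero. Your assertion that the degree-zero part equals $P_{-\lambda}$ of the expansion with the $E$-summands deleted is therefore false. The $\kappa$-terms are not removed by $P_{-\lambda}$ either: they lie in $K_{-\lambda}S(\Ualg^-)$, and being spread over several $\pi_{\alpha,\beta}$-components is irrelevant, because $P_{-\lambda}$ is not a projection onto one of them.

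More fundamentally, the lemma does not follow from weight and degree bookkeeping at all. It depends on the parameter conditions listed after \cref{eq:Ins}, which your argument never uses. Take $j=\tau i\neq i$ with $i\nsim j$, both even and not connected to $\I_\bu$ (e.g.\ the two end nodes of a diagram \cref{AIIIdiagram} with $\I_\bu=\varnothing$ and at least three nodes). Then $B_i=F_i+G_i$ and $B_j=F_j+G_j$ with $G_i=\va_iE_jK_i^{-1}$, $G_j=\va_jE_iK_j^{-1}$. A direct computation gives
\[
[B_i,B_j]=\frac{\va_iK_jK_i^{-1}-\va_jK_iK_j^{-1}}{q_i-q_i^{-1}}+\frac{\va_j-\va_i}{q_i-q_i^{-1}}K_i^{-1}K_j^{-1}.
\]
So $P_{-\alpha_i-\alpha_j}([B_i,B_j])=(\va_j-\va_i)(q_i-q_i^{-1})^{-1}K_i^{-1}K_j^{-1}$. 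This is a $\Ualg^+$-degree-zero term produced by the mixed products $F_iG_j$ and $G_iF_j$, and it vanishes only because of the condition $\va_i=\va_{\tau i}$.

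The paper does no such expansion; it imports the statement from \cite[Prop.~4.8 and 4.11]{SW25}. Any proof must feed in the parameter restrictions (which are what guarantee $\Ui\cap\Ualg^0=\Uio$) somewhere. Your briefly mentioned coproduct route points the right way. By \cref{eq:deltaP} and $\Delta(\Ui)\subseteq\Ui\otimes\Ualg$, one gets $P_{-\lambda}(p(\underline B))=(\id\otimes(\epsilon\circ P_{-\lambda}))\Delta(p(\underline B))\in\Ui$. In the example this forces vanishing, since $\Ui\cap\Ualg^0=\Uio\not\ni K_i^{-1}K_j^{-1}$. For general $p$, however, one still needs the structural input from \cite{SW25}, and your proposal supplies none.
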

\begin{proof}
    This is a direct consequence of \cite[Prop. 4.8 and 4.11]{SW25}.
\end{proof}

\begin{lem} (cf. \cite[Cor. 5.17]{Kol14})
\label{lem:remain}
       Let $p$ be a quantum Serre polynomial of weight $\lambda$. Then we have
       \begin{gather}
       \label{remain}
           p(\underline{B})\in \sum_{\{J\in\mathcal{J}|\wt(J)<\lambda\}} \Ub^+\Uio B_J.
       \end{gather}
       Moreover, $p(\underline{B})$ is independent of the choice of $\kappa_i$ for all $i\in \I_\ns$.
\end{lem}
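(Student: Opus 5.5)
We follow the argument of \cite[Cor.~5.17]{Kol14}. By \cref{basisthm} we can write
\[
p(\underline{B})=\sum_{J\in\mathcal J} a_J B_J,\qquad a_J\in \Ub^+\Uio,
\]
and the task is to show that $a_J=0$ whenever $\wt(J)\not<\lambda$. There are two steps. First, a weight argument shows that only $J$ with $\wt(J)\le\lambda$ can occur. Second, \cref{Proj=0} is used to kill the terms with $\wt(J)=\lambda$.

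\textbf{Weight bound.} Expand $p(\underline B)$ by substituting $B_i=F_i+\va_iT_{w_\bu}(E_{\tau i})K_i^{-1}+\kappa_iK_i^{-1}$ into each letter. Every resulting monomial lies in $\Ualg^+\Ualg^0\Ualg^-_{-\mu}$ with $\mu\le\lambda$, and equality $\mu=\lambda$ holds only for the monomial $p(\underline F)$. To compare with the right-hand side, use the triangular-type decomposition $\Ualg=\bigoplus_{\alpha,\beta}\Ualg^+_\alpha\Ualg^0\Ualg^-_{-\beta}$ together with the projections $\pi_{\alpha,\beta}$. Order the $J$ with $a_J\neq0$ and pick one with $\wt(J)$ maximal. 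The component of $a_JB_J$ in $\Ualg^+\Ualg^0\Ualg^-_{-\wt(J)}$ equals $a_JF_J$ plus terms coming from other $J'$ of the same weight. These components are linearly independent, because $\{F_J\}_{J\in\mathcal J}$ is a basis and $\Ualg^+\otimes\Ualg^0\otimes\Ualg^-\cong\Ualg$. Hence $\wt(J)\le\lambda$ for every $J$ that occurs.

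\textbf{Top weight.} Now suppose $\wt(J)=\lambda$. The same comparison in degree $\Ualg^-_{-\lambda}$ gives
\[
\sum_{\wt(J)=\lambda}a_JF_J=p(\underline F)=0
\]
modulo the $\Ualg^+\Ualg^0$-coefficients. This does not immediately force $a_J=0$, since $a_J$ sits in $\Ub^+\Uio$. We therefore apply $P_{-\lambda}$, since $K_{-\lambda}$-components detect the leading behaviour. By \cref{Proj=0}, $P_{-\lambda}(p(\underline B))=0$.

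On the other hand, $P_{-\lambda}(a_JB_J)$ for $\wt(J)=\lambda$ picks out the term in which every letter contributes its $T_{w_\bu}(E_{\tau i})K_i^{-1}$ piece. Written in the form $\Ualg^+K_{-\lambda}S(\Ualg^-)$, this gives roughly $a_J\,\va_J\,T_{w_\bu}(E_{\tau J})K_{-\lambda}$ after reordering. The terms with $\wt(J)<\lambda$ cannot produce $K_{-\lambda}$ with this $\Ualg^+$-part. Since $T_{w_\bu}\circ\tau$ is injective on $\Ualg^+$ (it maps $\{E_{\tau J}\}$ to a linearly independent family), linear independence yields $a_J=0$.

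The main obstacle is this last step. We must check carefully that the $K_{-\lambda}$-projection of $a_JB_J$ is triangular with respect to $J$, with nondegenerate leading term. The difficulty is that $a_J$ contains $\Ub^+$ and $K$-factors, and commuting $E_i$ ($i\in\I_\bu$) past the $B$'s via \cref{eq:EjBk} generates Cartan terms. In practice we would cite \cite[Prop.~4.8, 4.11]{SW25} for this triangularity, mirroring \cite[Prop.~5.16]{Kol14}.

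\textbf{Independence of $\kappa_i$.} Here we argue as in \cite{Kol14}. The terms of $p(\underline B)$ involving $\kappa_i$ arise from replacing some $B_i$ by $\kappa_iK_i^{-1}$. For $i\in\I_\ns$ with $\kappa_i\neq0$, the admissibility condition forces $\langle h_k,\alpha_i\rangle$ to be even and $i$ to be an even node. The $\kappa_i$-dependent contributions then cancel because of the shape of the Serre polynomials. For $p_0,p_1,p_3$ the node $i$ must be even, nonadjacent to the relevant nodes, or absent (and isotropic nodes are never in $\I_\ns$). For $p_2$ the cancellation is the classical computation in \cite[Sect.~5]{Kol14}, resting on $\sum_k(-1)^k\qbinom{2}{k}_{q_i}q_i^{\ast}=0$. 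We would verify these cases directly, polynomial by polynomial.
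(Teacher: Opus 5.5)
\textbf{The weight statement \cref{remain}.} Your comparison of $\Ualg^-$-components is sound. Contrary to what you write, it already settles the top weight. Once every $\wt(J)\le\lambda$, the $\Ualg^+\Ualg^0\Ualg^-_{-\lambda}$-component of $\sum_J a_JB_J$ is $\sum_{\wt(J)=\lambda}a_JF_J$, and that of $p(\underline B)$ is $p(\underline F)=0$. By \cref{eq:triangle}, $\Ualg$ is a free left $\Ualg^+\Ualg^0$-module on $\{F_J\}_{J\in\mathcal J}$; this is the fact you used one step earlier. Hence all these $a_J\in\Ub^+\Uio\subset\Ualg^+\Ualg^0$ vanish.

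The $P_{-\lambda}$ paragraph should simply be deleted, and it is also wrong as stated. Since $F_i=-S(F_i)K_i^{-1}\in K_{-\alpha_i}S(\Ualg^-)$, all three summands of $B_i$ lie in $\Ualg^+K_{-\alpha_i}S(\Ualg^-)$. So $P_{-\lambda}(B_J)$ retains $F_J$ and does not isolate the all-$E$ term.

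After this repair your route differs genuinely from the paper's:
\begin{itemize}
\item You use \cref{basisthm} plus a leading-term argument, and you never need \cref{Proj=0}.
\item The paper never uses \cref{basisthm}. It applies $\id\otimes P_{-\lambda}$ to \cref{comultiXi}, uses $P_{-\lambda}(\Xi)=0$, and then applies the counit.
\end{itemize}
The paper's route yields the explicit identity \cref{key}, which drives all later computations. It also keeps the lemma logically prior to the basis theorem. In \cite{SW25}, as in \cite{Kol14}, the spanning half of \cref{basisthm} is obtained from exactly these lower-order Serre relations; compare the proof of \cref{presentation}. So your argument avoids circularity only because the paper imports \cref{basisthm} as a black box.

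\textbf{The $\kappa$-independence} is a genuine gap. A leading-term argument says nothing about how the coefficients $a_J$ depend on $\kappa$. What you offer instead is an unexecuted polynomial-by-polynomial check of literal cancellation, and the reasons you give do not produce that cancellation.

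For instance, take the second sAIII diagram for $m=4$, $n=2$ in the example of \cref{sec:QSP}. There $\I_\ns=\{3\}$, and $\kappa_3$ may be nonzero. For $p_3(x_1,x_2,x_3)$ the inner bracket is
\[
[[B_1,B_2]_{q_2},\kappa_3K_3^{-1}]_{q_3}=\kappa_3(1-q_3^2)\va_1\va_2(E_5E_4+q_2E_4E_5)K_1^{-1}K_2^{-1}K_3^{-1},
\]
which is nonzero. The $\kappa_3$-term disappears only after the outer bracket with $B_2$, using $E_4^2=0$ and $d_2=-d_4$.

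The missing idea is the paper's. For $i\in\I_\ns$ one has $\Delta(B_i)=B_i\otimes K_i^{-1}+1\otimes(F_i+\va_iE_iK_i^{-1})$, and $\kappa_i=0$ for $i\notin\I_\ns$. So every second tensor factor entering \cref{key} is $\kappa$-free. Then $p(\underline B)$ comes out as a combination of lower $B$-words, moved into $\Ub^+\Uio B_J$-form via \cref{eq:EjBk}, with $\kappa$-independent coefficients. That is the sense of independence used in \cref{presentation}; the $B_J$ themselves do depend on $\kappa$. It holds uniformly for every Serre polynomial, with no case analysis.
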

\begin{proof}
    This is essentially another way of interpreting \cite[Prop. 4.12]{SW25}. Set $\Xi=p(\underline{B})$ and $Z=P_{-\lambda}(\Xi)$. It follows from \cite[(4.7)]{SW25} and \cref{eq:EjBk} that
   \begin{equation}
\label{comultiXi}
    \Delta(\Xi)\in\Xi\otimes K_{-\lambda}+\sum_{\{J\mid \wt(J)<\lambda\}}\Ualg^{+}_\bu\Ualg^{\imath0}B_J\otimes \Ualg.
\end{equation}
Hence \cref{eq:deltaP} implies that
   \begin{equation}
\label{comultiZ}
    \Delta(Z)\in \Xi\otimes K_{-\lambda}+\sum_{\{J\mid \wt(J)<\lambda\}}\Ualg^{+}_\bu\Ualg^{\imath0}B_J\otimes P_{-\lambda}(\Ualg).
\end{equation}
Since $Z=0$ by \cref{Proj=0}, after applying $1\otimes \epsilon$ to \cref{comultiZ} we conclude that \(\Xi\in \sum_{\{J\in\mathcal{J}|\wt(J)<\lambda\}} \Ub^+\Uio B_J.\) The independence from the parameters $\kappa_i$ follows from the fact that 
\[
\Delta(B_i)=B_i\otimes K_i^{-1}+ 1\otimes (F_i+\va_i E_iK_i^{-1}),\quad \forall i\in\I_\ns.
\]
Thus $\Delta(B_i)$ depends only on $\va_i$ but not $\kappa_i$ in general. By the above proof, $p(\underline{B})$ is independent of the choice of $\kappa_i$ as well.
\end{proof}

Thus, determining a Serre presentation of \(\Ui\) amounts to deriving, for each quantum Serre polynomial \(p(\underline{B})\), the explicit element appearing on the right-hand side of \cref{remain}. For a quantum Serre polynomial
\(
p=p(x_{i_1},x_{i_2},\ldots,x_{i_\ell}),
\)
we denote by \(\Rem^p_{i_1,i_2,\ldots,i_\ell}\) the corresponding remainder term in \cref{remain}, that is,
\[
p(\underline{B})
=
\Rem^p_{i_1,i_2,\ldots,i_\ell}
\in
\sum_{\{J\mid \wt(J)<\lambda\}}
\Ualg^{+}_\bu \Ualg^{\imath 0} B_J .
\]
Here we suppress from the notation the dependence of \(\Rem^p_{i_1,i_2,\ldots,i_\ell}\) on the parameters.

Set $\Xi=p(\underline{B})$ and $Z=P_{-\lambda}(\Xi)$. By \cref{comultiZ} again, we have
\begin{equation}
    \label{key}
        \Rem^p_{i_1,i_2,\ldots,i_\ell}=-(\id\otimes \epsilon)(\Delta(Z)-\Xi\otimes K_{-\lambda})
        \overset{\cref{eq:deltaP}}{=}-(\id\otimes \epsilon)(\id \otimes (P_{-\lambda}\circ \pi_{0,0}))(\Delta(\Xi)-\Xi\otimes K_{-\lambda}).
\end{equation}
The expression
\[
(\id \otimes (P_{-\lambda}\circ \pi_{0,0}))(\Delta(\Xi)-\Xi\otimes K_{-\lambda})
\]
can often be evaluated rather efficiently, since many terms in
\(
(\id \otimes (P_{-\lambda}\circ \pi_{0,0}))\Delta(\Xi)
\)
vanish.  
More explicitly, a summand of \(\Delta(\Xi)\) can contribute nontrivially only if its second tensor factor contains the same number of \(E_i\)'s and \(F_i\)'s for some $i\in I$. Thus, in order to apply \cref{key} later, we need to have a better understanding of $\Delta(B_i)$ for $i\in I_\circ$.
\begin{lem}
(cf. \cite[Lem. 7.2]{Kol14})\label{lem:remain34}
    For any $i\in I_\circ$, we have
    \begin{gather}
        \label{comultiBi}
        \Delta(B_i)=B_i\otimes K_i^{-1}+1\otimes F_i + \va_i\mathcal{Z}_i \otimes E_{\tau i}K_i^{-1}+\mathcal{Q}
    \end{gather}
    for some $\mathcal{Z}_i\in \Ub\Uio$ and $\mathcal{Q}\in \Ub\Uio\otimes \sum_{\gamma>\alpha_{\tau i}}\Ualg_\gamma^+ K_i^{-1}$. 
\end{lem}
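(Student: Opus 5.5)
We compute $\Delta(B_i)$ term by term from \cref{def:Ui}. For the $F_i$ part, \cref{comult} gives $\Delta(F_i)=F_i\otimes K_i^{-1}+1\otimes F_i$. The constant part gives $\Delta(\kappa_iK_i^{-1})=\kappa_iK_i^{-1}\otimes K_i^{-1}$. Together these account for
\[
\bigl(F_i+\kappa_iK_i^{-1}\bigr)\otimes K_i^{-1}+1\otimes F_i .
\]
To recover $B_i\otimes K_i^{-1}$ we must therefore show that $\Delta\bigl(T_{w_\bu}(E_{\tau i})K_i^{-1}\bigr)$ has the form
\[
T_{w_\bu}(E_{\tau i})K_i^{-1}\otimes K_i^{-1}+\mathcal{Z}_i\otimes E_{\tau i}K_i^{-1}+(\text{terms in }\Ub\Uio\otimes \textstyle\sum_{\gamma>\alpha_{\tau i}}\Ualg^+_\gamma K_i^{-1}),
\]
with $\mathcal{Z}_i\in\Ub\Uio$. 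Multiplying by $\va_i$ then gives \cref{comultiBi}.

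For this we use the structure of $T_{w_\bu}(E_{\tau i})$. Since $\tau i\in\I_\circ$ and $w_\bu$ lies in the even Weyl group $W_\bu$, the definition \cref{Tidef} applied along a reduced expression shows that $T_{w_\bu}(E_{\tau i})\in\Ualg^+_{w_\bu(\alpha_{\tau i})}$. Here $w_\bu(\alpha_{\tau i})=\alpha_{\tau i}+\beta$ with $\beta\in\Z_{\ge0}\I_\bu$, and $T_{w_\bu}(E_{\tau i})$ is a polynomial in $E_{\tau i}$ and $E_j$, $j\in\I_\bu$, linear in $E_{\tau i}$. Since $\Delta$ respects the $X^+$-grading on $\Ualg^+$, we can write
\[
\Delta(T_{w_\bu}(E_{\tau i}))\in\sum_{\mu+\nu=w_\bu(\alpha_{\tau i})}\Ualg^+_\mu K_\nu\otimes\Ualg^+_\nu .
\]
The extreme term $\nu=0$ is $T_{w_\bu}(E_{\tau i})\otimes 1$.

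We now sort the remaining summands by the second-factor weight $\nu$.
\begin{enumerate}
\item If $\nu$ has $\alpha_{\tau i}$-coefficient zero, then $\nu\in\Z_{\ge0}\I_\bu$ and $\nu\neq0$. These terms must be shown to be absent; they would have to be absorbed into $B_i\otimes K_i^{-1}$, which is impossible.
\item If $\nu=\alpha_{\tau i}$, then $\Ualg^+_\nu=\C(q)E_{\tau i}$, and the first factor lies in $\Ualg^+_\beta K_{\alpha_{\tau i}}\subset\Ub^+\Ualg^0$. This produces $\mathcal{Z}_i\otimes E_{\tau i}$.
\item If $\nu=\alpha_{\tau i}+\beta'$ with $0\ne\beta'\le\beta$, then $\nu>\alpha_{\tau i}$ and the term goes into $\mathcal{Q}$.
\end{enumerate}
Finally we multiply everything by $\Delta(K_i^{-1})=K_i^{-1}\otimes K_i^{-1}$.

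The main obstacle is twofold. We must justify that the case~(a) terms vanish, and we must show that the first tensor factors land in $\Ub\Uio$ rather than merely in $\Ub^+\Ualg^0$; the remainder $\mathcal{Q}$ requires the same.
\begin{enumerate}
\item[(i)] For the vanishing in case~(a), I would follow \cite[Lem.~7.2]{Kol14} and use that $T_{w_\bu}(E_{\tau i})$ is a lowest-weight vector for the adjoint action of $\Ub$. Concretely, $E_{\tau i}$ is killed by $\mathrm{ad}\,F_j$ for $j\in\I_\bu$, and hence $T_{w_\bu}(E_{\tau i})$ is killed by $\mathrm{ad}\,E_j$ for $j\in\I_\bu$. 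In the super setting these are even operators, and they give the needed skew-primitivity in the $\I_\bu$-directions. Equivalently, $T_{w_\bu}(E_{\tau i})$ is a lowest vector in the $\Ub$-module generated by $E_{\tau i}$.
\item[(ii)] For the first factors, the relevant toral elements must combine into elements of $Y^\io$. I would argue this by replacing the naive first factors via the coideal property $\Delta(\Ui)\subset\Ui\otimes\Ualg$ from \cite[Prop.~4.2]{SW25}. After subtracting $B_i\otimes K_i^{-1}+1\otimes F_i$, the first tensor factors of the remaining terms lie in $\Ui\cap\Ub^+\Ualg^0$. By \cref{basisthm} and the condition $\Ui\cap\Ualg^0=\Uio$ on the parameters, $\Ui\cap\Ub^+\Ualg^0$ equals $\Ub^+\Uio\subset\Ub\Uio$.

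This intersection argument is the step I expect to be the most delicate. It is where the super signs from $T_j$ being an even automorphism must be tracked, though they do not change the weight bookkeeping.
\end{enumerate}
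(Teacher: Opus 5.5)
\textbf{Gap: step (i), the vanishing of the case-(a) summands.} Annihilation of $u=T_{w_\bu}(E_{\tau i})$ by $\mathrm{ad}\,E_j$ only says $E_ju=q^{(\alpha_j,\wt(u))}uE_j$. That $q$-commutation relation does not control the summands of $\Delta(u)$ whose second factor lies in $\Ualg^+_\nu$ with $0\neq\nu\in\Z_{\ge0}\I_\bu$. So your sentence that it gives ``the needed skew-primitivity'' is unsupported.

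For $u\in\Ualg^+$, the $\otimes E_j$-component of $\Delta(u)$ is governed by the $K_j$-part of the commutator $uF_j-F_ju$. Equivalently, that component vanishes iff $\mathrm{ad}(F_j)(u)\in\Ualg^+$. Compare $u=E_{\tau i}E_j$ with $u=T_j(E_{\tau i})$: the relevant operator is $F_j$, not $E_j$. Also, $T_{w_\bu}(E_{\tau i})$ is the highest, not the lowest, vector of the module generated by $E_{\tau i}$.

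What is actually needed is the alternative expression the paper imports from \cite{Kol14} via \cite{SW25}: $T_{w_\bu}(E_{\tau i})=\mathrm{ad}(Z^+)(E_{\tau i})$ for some $Z^+\in\Ub^+$. This rests on $\mathrm{ad}\,F_j(E_{\tau i})=0$, so that $\mathrm{ad}(\Ub)E_{\tau i}=\mathrm{ad}(\Ub^+)E_{\tau i}$ is a finite-dimensional module with highest weight vector $T_{w_\bu}(E_{\tau i})$. Combine it with $\Delta(\mathrm{ad}(x)(y))=\sum x_{(1)}y_{(1)}S(x_{(4)})\otimes x_{(2)}y_{(2)}S(x_{(3)})$ for even $x$. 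The part with $y_{(1)}\otimes y_{(2)}=E_{\tau i}\otimes 1$ collapses to $\mathrm{ad}(x)(E_{\tau i})\otimes 1$. Every other summand has a second factor whose weight has $\alpha_{\tau i}$-coefficient $1$. Hence case (a) is empty, and the $\nu=\alpha_{\tau i}$ part yields $\mathcal Z_i$.

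\textbf{Step (ii) is correct and is a different route.} The basis theorem, together with $B_J\in F_J+\sum_{\gamma<\wt(J)}\Ualg^+\Ualg^0\Ualg^-_{-\gamma}$, does give $\Ui\cap\Ualg^+\Ualg^0=\Ub^+\Uio$ by a leading-term argument. This differs from the paper's route, where membership in $\Ub\Uio$ comes out of the explicit Kolb-type formula.

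In fact step (ii) also repairs (i) if you run it with $\Ualg^+\Ualg^0$ in place of $\Ub^+\Ualg^0$. The case-(a) summands are the only summands of $\Delta(B_i)$ whose second factor has weight $\nu\in\Z_{\ge0}\I_\bu\setminus\{0\}$. So by $\Delta(\Ui)\subset\Ui\otimes\Ualg$, split by second-factor weight, their first factors lie in $\Ui\cap\Ualg^+_{w_\bu(\alpha_{\tau i})-\nu}\Ualg^0\subseteq\Ub^+\Uio\cap\Ualg^+_{w_\bu(\alpha_{\tau i})-\nu}\Ualg^0=0$. The intersection is zero because $w_\bu(\alpha_{\tau i})-\nu$ has $\alpha_{\tau i}$-coefficient $1$.

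Repaired this way, your proof needs only the coideal property and the basis theorem, not the adjoint-action form of $T_{w_\bu}(E_{\tau i})$. The trade-off is that it leans on the basis theorem, whereas Kolb's route is self-contained once that expression is available.
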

\begin{proof}
Recall that
\(
B_i=F_i+\va_i T_{w_\bu}(E_{\tau i})K_i^{-1}+\kappa_i K_i^{-1}.
\)
Kolb's proof of \cite[Lem.~7.2]{Kol14} relies on an alternative expression for the term \(T_{w_\bu}(E_{\tau i})K_i^{-1}\). By \cite[(3.16) and (4.2)]{SW25}, the analogous expression is available in the super setting, and hence the same argument applies here without essential changes. 
\end{proof}

\begin{lem}
(cf. \cite[Lem. 7.7]{Kol14})\label{lem:remain35}
    Suppose $i=\tau i\in I_\circ$ and $j\in I_\bu$. Then there exists $W_{i,j}\in \Ub^+$ such that
    \begin{gather}
        \label{comultiBi2}
        \Delta(B_i)=B_i\otimes K_i^{-1}+1\otimes F_i + \va_i\mathcal{Z}_i \otimes E_{\tau i}K_i^{-1}
        +\va_iW_{i,j}K_j\otimes T_j(E_i)K_i^{-1}+
        \mathcal{Q}'
    \end{gather}
    for some $\mathcal{Q}'\in \Ub\Uio\otimes\sum_{\gamma>\alpha_{i}\atop \gamma\neq \alpha_i+\alpha_j}\Ualg_\gamma^+ K_i^{-1}$. 
\end{lem}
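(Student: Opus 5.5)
We refine the computation behind \cref{lem:remain34}, following \cite[Lem.~7.7]{Kol14}. Since $\tau i=i$, we have $B_i=F_i+\va_i T_{w_\bu}(E_i)K_i^{-1}+\kappa_iK_i^{-1}$. The terms $F_i$ and $\kappa_iK_i^{-1}$ contribute $F_i\otimes K_i^{-1}+1\otimes F_i$ and $\kappa_iK_i^{-1}\otimes K_i^{-1}$. So everything reduces to understanding the second tensor legs of $\Delta(T_{w_\bu}(E_i)K_i^{-1})$, graded by $\Ualg^+$-weight.

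The first step is to use the alternative expression for $T_{w_\bu}(E_i)$ from \cite[(3.16) and (4.2)]{SW25}, which is the same input used for \cref{lem:remain34}. It writes $T_{w_\bu}(E_i)$, up to a nonzero scalar, as an iterated $q$-commutator (equivalently, an iterated left adjoint action) of the $E_j$ with $j\in\I_\bu$ applied to $E_i$. Hence $T_{w_\bu}(E_i)\in\sum_\gamma \Ub^+_{\gamma-\alpha_i}E_i\Ub^+$, homogeneous of weight $w_\bu(\alpha_i)$. Expanding $\Delta$ multiplicatively with $\Delta(E_k)=E_k\otimes1+K_k\otimes E_k$, every summand has second leg in $\Ualg^+_\gamma K_i^{-1}$ with $\alpha_i\le\gamma\le w_\bu(\alpha_i)$, because the $E_i$ itself must land in the second leg for the first leg to lie in $\Ub\Uio$. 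The component with $\gamma=\alpha_i$ is exactly $\va_i\mathcal{Z}_i\otimes E_iK_i^{-1}$, as in \cref{lem:remain34}. Everything with $\gamma>\alpha_i$ and $\gamma\neq\alpha_i+\alpha_j$ is put into $\mathcal{Q}'$. To see that the first legs lie in $\Ub\Uio$ and that $B_i\otimes K_i^{-1}$ collects the full leading term, one argues exactly as in \cref{lem:remain34}.

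The substantive step is to isolate the component of weight $\gamma=\alpha_i+\alpha_j$ and show it has the form $\va_iW_{i,j}K_j\otimes T_j(E_i)K_i^{-1}$. The plan is to choose a reduced expression $w_\bu=w's_j$ with $s_j$ rightmost, which is possible since $w_\bu$ is the longest element and $j\in\I_\bu$. Then $T_{w_\bu}(E_i)=T_{w'}(T_j(E_i))$.

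If $a_{ji}=0$, then $T_j(E_i)=E_i$ and $\Ualg^+_{\alpha_i+\alpha_j}$ contributes no term of the required shape, so we take $W_{i,j}=0$. Note that $T_j(E_i)=E_i$ here, so the required form would not be of weight $\alpha_i+\alpha_j$ anyway; any weight-$(\alpha_i+\alpha_j)$ component, which is $E_jE_i$-type and hence proportional to $E_iE_j$, can still be absorbed.

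If $j\sim i$, the $q$-commutator formula \cref{Tidef} gives $T_j(E_i)=E_jE_i-q^{(\alpha_j,\alpha_i)}E_iE_j$, and $\Ualg^+_{\alpha_i+\alpha_j}$ is spanned by $E_iE_j$ and $E_jE_i$. In this case one computes $\Delta(T_j(E_i))$ directly:
\[
\Delta(T_j(E_i))=T_j(E_i)\otimes1+(\ast)\,K_iE_j\otimes E_i+K_iK_j\otimes T_j(E_i).
\]
The key point is that the only summand with second leg $E_iE_j$ or $E_jE_i$ is $K_iK_j\otimes T_j(E_i)$, because of the $q$-commutator cancellation. Next we apply $\Delta(T_{w'}(\cdot))$, viewing $T_{w'}$ through the ad-action description in the first leg. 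The weight-$(\alpha_i+\alpha_j)$ second legs then assemble into $X\otimes T_j(E_i)$ with $X\in\Ub^+K_jK_i$. Multiplying by $\Delta(K_i^{-1})$ and writing $X K_i^{-1}=W_{i,j}K_j$ gives the claim.

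The main obstacle is this last step: verifying that no other term of weight $\alpha_i+\alpha_j$, for instance one proportional to $E_iE_j$ alone, survives in the second leg. I would first try to handle it via the right coideal structure, namely $\Delta(\mathrm{ad}(u)E_i)=\sum \mathrm{ad}(u_{(1)})\ldots$. If that is not clean, I would fall back on the Levi factorization of $T_{w_\bu}$ as in Kolb's proof, using the super signs from \cite{SW25}. The signs only involve even $E_j$, since $\I_\bu\subset\Ieven$.
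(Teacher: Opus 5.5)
Your proposal has a genuine gap at the step you yourself call the main obstacle, and that step is the whole content of this lemma beyond \cref{lem:remain34}. Writing $T_{w_\bu}(E_i)=T_{w'}(T_j(E_i))$ and computing $\Delta(T_j(E_i))$ gives no control over $\Delta(T_{w'}T_j(E_i))$. The reason is that $T_{w'}$ is an algebra automorphism but not a coalgebra map: it intertwines $\Delta$ only up to conjugation by a quasi-$R$-matrix. So the claim that the weight-$(\alpha_i+\alpha_j)$ second legs ``assemble into $X\otimes T_j(E_i)$'' is unsupported. When $i\sim j$ the space $\Ualg^+_{\alpha_i+\alpha_j}$ is two-dimensional, and proving that only the line through $T_j(E_i)$ occurs is exactly what is required.

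Your treatment of $a_{ij}=0$ is also wrong as written. Here $\mathcal{Q}'$ explicitly excludes $\gamma=\alpha_i+\alpha_j$, and the $W$-term has weight $\alpha_i$, so a weight-$(\alpha_i+\alpha_j)$ component cannot be ``absorbed'' anywhere. You must show that it vanishes.

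Both points follow quickly from \cref{lem:remain34} and coassociativity, with no factorization of $w_\bu$. Weights below refer to the grading by $K$-conjugation.
\begin{itemize}
\item By \cref{lem:remain34}, no second leg of $\Delta(B_i)$ has weight $\alpha_j$. Hence the part of $(\Delta\otimes\id)\Delta(B_i)$ whose second and third legs have weights $\alpha_i$ and $\alpha_j$ is zero.
\item In $(\id\otimes\Delta)\Delta(B_i)$, that part comes only from the weight-$(\alpha_i+\alpha_j)$ component $\sum_s a_s\otimes b_sK_i^{-1}$, where you may take the $a_s$ linearly independent. So each $b_s\in\Ualg^+_{\alpha_i+\alpha_j}$ lies in the kernel of the map extracting the $\Ualg^+_{\alpha_i}K_j\otimes\Ualg^+_{\alpha_j}$-component of $\Delta$.
\item That map sends $E_iE_j\mapsto E_iK_j\otimes E_j$ and $E_jE_i\mapsto q^{(\alpha_j,\alpha_i)}E_iK_j\otimes E_j$. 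Its kernel is therefore spanned by $E_jE_i-q^{(\alpha_j,\alpha_i)}E_iE_j=T_j(E_i)$ when $a_{ij}\neq 0$, and is zero when $a_{ij}=0$.
\item Weight counting puts the corresponding first legs in $\Ub^+K_j$, which defines $W_{i,j}$.
\end{itemize}

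An equivalent route, closer in spirit to Kolb's argument (the paper gives no proof and simply cites \cite[Lem.~7.7]{Kol14}), uses the adjoint action. Write $T_{w_\bu}(E_i)=\operatorname{ad}(u)(E_i)$ with $u\in\Ub^+$; this holds because it is the top weight vector of the $\operatorname{ad}(\Ub)$-module generated by $E_i$. Then use $\Delta(\operatorname{ad}(u)x)=\sum u_{(1)}x_{(1)}S(u_{(3)})\otimes\operatorname{ad}(u_{(2)})(x_{(2)})$. The second legs other than $K_i^{-1}$ are multiples of $\operatorname{ad}(v)(E_i)K_i^{-1}$ with $v\in\Ub^+$. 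Weight $\alpha_i+\alpha_j$ forces $v\propto E_j$, and $\operatorname{ad}(E_j)(E_i)=T_j(E_i)$, which is $0$ when $a_{ij}=0$.
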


Summarizing, the element $\mathcal{Z}_i$ in \cref{lem:remain34} is uniquely determined by the summand in $\Delta(T_{w_\bu}(E_{\tau i})K_i^{-1})$ whose second tensor factor is $E_{\tau i}K_i^{-1}$, while the element $W_{i,j}$ in \cref{lem:remain35} is uniquely determined by the summand in $\Delta(T_{w_\bu}(E_{ i})K_i^{-1})$ whose second tensor factor is $T_j(E_{i})K_i^{-1}$.

\subsection{Serre presentation}
\label{sec:Serre presentation}
The following theorem is a formal consequence of the results in \cite[Sect.~4]{SW25}. It holds for $\imath$quantum enveloping superalgebras of arbitrary types and extends \cite[Thm.~7.4]{Let02} and \cite[Thm.~7.1]{Kol14} to the super setting. For a given Dynkin diagram \(\I\), we define
\begin{gather}
\label{SS}
    \mathcal{S}(\I)
    =
    \{\text{all quantum Serre polynomials associated with }\I\}.
\end{gather}
\begin{theo}
\label{presentation}
    Let $\tUi$ be the algebra freely generated over $\Ub^+\Uio$ by elements $\tB_i$ for all $i\in \I$ and let $\Phi$ denote the canonical algebra homomorphism $\Phi: \tUi\to \Ui$ given by 
    \[
    \tB_i \mapsto B_i,\qquad x \mapsto x,\qquad \text{ for all }i\in I,\ x\in \Ub^+\Uio.
    \]
    Write $\tB_J=\tB_{i_1}\tB_{i_2}\cdots \tB_{i_l}$ for any $J=(i_1,i_2,\ldots,i_l)\in \I^l$. Then there exist elements 
    \[
    \widetilde{\Rem}^p_{i_1,i_2,\ldots,i_k} \in \sum_{\{J\in \mathcal{J}\mid \wt(J)<\lambda(p)\}}\Ub^+\Uio \tB_J
    \]
    for all $p=p(x_{i_1},x_{i_2},\ldots,x_{i_k})\in \mathcal{S}(I)$ of weight $\lambda(p)$, such that the kernel $\ker(\Phi)$ is the ideal of $\tUi$ generated by the following elements:
    \begin{gather}
    \label{Serre1}
        K_h \tB_i -q^{-\langle h,\alpha_i\rangle}\tB_i K_h, \qquad \text{ for all }h\in Y^\io,i\in \I, \\
        \label{Serre2}
        [E_i,\tB_j]-\delta_{ij}\frac{K_i-K_i^{-1}}{q_i-q_i^{-1}},\qquad \text{ for all } i\in \I_\bu, j\in \I,\\
        \label{Serre3}
        p(\tB_{i_1},\tB_{i_2},\ldots,\tB_{i_k})-\widetilde{\Rem}^p_{i_1,i_2,\ldots,i_k} \text{ for all }p=p(x_{i_1},x_{i_2},\ldots,x_{i_k})\in \mathcal{S}(I).
    \end{gather}
    Moreover, the formal expression of $\widetilde{\Rem}^p_{i_1,i_2,\ldots,i_k}$ is independent of the choice of $\kappa_i$ for any $i\in \I_\ns$.
\end{theo}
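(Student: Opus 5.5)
We follow the strategy of \cite[Thm.~7.1]{Kol14} and \cite[Thm.~7.4]{Let02}. The plan is to define the remainders, check that the listed elements lie in $\ker(\Phi)$, and then prove equality by a basis and filtration comparison against \cref{basisthm}.

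First we fix the remainders. For each $p\in\mathcal S(\I)$, \cref{lem:remain} gives an expression
\[
p(\underline B)=\Rem^p_{i_1,\ldots,i_k}=\sum_{\wt(J)<\lambda(p)}u_J B_J,\qquad u_J\in \Ub^+\Uio .
\]
By \cref{basisthm}, the coefficients $u_J$ with $J\in\mathcal J$ are uniquely determined. We define $\widetilde{\Rem}^p_{i_1,\ldots,i_k}:=\sum_J u_J\tB_J$. By construction, $\Phi$ kills \cref{Serre3}. It also kills \cref{Serre1}, because of \cref{KEF} and $\va_i$-homogeneity: $T_{w_\bu}(E_{\tau i})K_i^{-1}$ has weight $-\alpha_i$ under $\Uio$, since $h\in Y^\io$. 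It kills \cref{Serre2} by \cref{eq:EjBk}. Let $\mathcal I$ be the ideal generated by these elements; then $\mathcal I\subseteq\ker\Phi$. Independence from $\kappa_i$ follows directly from \cref{lem:remain}, because the $u_J$ are read off from the $\kappa$-independent element $p(\underline B)$ via the basis.

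For the reverse inclusion, we show that $\tUi/\mathcal I$ is spanned as a left $\Ub^+\Uio$-module by the images of $\tB_J$, $J\in\mathcal J$. Then the map $\tUi/\mathcal I\to\Ui$ sends a spanning set onto a basis (\cref{basisthm}), so it is injective and $\ker\Phi=\mathcal I$. To get the spanning statement, we first use \cref{Serre1,Serre2} to move every element of $\Ub^+\Uio$ to the left. Commuting $E_i$ past $\tB_j$ produces only Cartan terms of lower degree, and $K_h$ commutes up to scalars. So $\tUi/\mathcal I$ is spanned by $\Ub^+\Uio\,\tB_J$ for arbitrary words $J$. Next we argue by induction on the weight $\wt(J)$, which is well founded in $X^+$, and for fixed weight we use a suitable ordering on words. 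We must show that each $\tB_J$ is congruent modulo $\mathcal I$ to an $\Ub^+\Uio$-combination of $\tB_{J'}$ with $J'\in\mathcal J$, $\wt(J')=\wt(J)$, plus terms of strictly lower weight.

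The key input is that $\Ualg^-$ is presented by generators $F_i$ and the relations $p(\underline F)=0$ for $p\in\mathcal S(\I)$. This is Yamane's presentation \cite{Yam94,Yam99}, together with the triangular decomposition \cref{eq:triangle}. Hence, in the free algebra on symbols $y_i$, every word $y_J$ equals a combination of $y_{J'}$ with $J'\in\mathcal J$ plus an element of the two-sided ideal generated by the $p(\underline y)$. That ideal element has the form $\sum a\,p(\underline y)\,b$ with homogeneous $a,b$. Substituting $y\mapsto\tB$ and replacing each $p(\tB)$ by $\widetilde{\Rem}^p$ modulo $\mathcal I$ lowers the weight. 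The induction hypothesis then finishes the argument.

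The main obstacle is the mismatch between the presentations. The graded associated algebra of $\Ui$ should be $\Ub^+\Uio\ltimes\Ualg^-$ (with $\Ub^-$ inside). We need the relations of $\Ualg^-$ to be exactly the $p\in\mathcal S(\I)$, together with the compatibility of $\mathcal J$ with the $\Ub^+$-action, so that the commutation in \cref{Serre2} does not reintroduce words of top weight outside $\mathcal J$. The first thing to try is to establish the super analogue of the fact that $\Ualg^-$ is the quotient of the free algebra by the ideal generated by the Serre polynomials. If the paper's cited PBW results give this for the listed $p_0$ through $p_3$ in type~A, the filtration argument above goes through verbatim as in \cite[Sect.~7]{Kol14}.
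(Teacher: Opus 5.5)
Your proposal is correct and follows the same route as the paper. You define the remainders via \cref{lem:remain}, check that the three families of generators lie in $\ker\Phi$, show that $\tUi/\mathcal I$ is spanned over $\Ub^+\Uio$ by the $\tB_J$ with $J\in\mathcal J$, and conclude injectivity from \cref{basisthm}. The paper only cites the proof of \cite[Thm.~4.14]{SW25} for the spanning step, whereas you spell it out using Yamane's Serre presentation of $\Ualg^-$ and induction on weight. The obstacle you flag about reintroducing top-weight words is harmless: commuting $E_i$ ($i\in\I_\bu$) to the left past the $\tB_j$ never increases weight.
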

\begin{proof}
It follows from \cref{remain} that, for every
\(
p=p(x_{i_1},x_{i_2},\ldots,x_{i_k})\in \mathcal{S}(I)
\)
of weight \(\lambda(p)\), there exists an element
\(
\widetilde{\Rem}^p_{i_1,i_2,\ldots,i_k}
\in
\sum_{\{J\in \mathcal{J}\mid \wt(J)<\lambda(p)\}}
\Ub^+\Uio \tB_J
\)
such that
\[
p(\tB_{i_1},\tB_{i_2},\ldots,\tB_{i_k})
-
\widetilde{\Rem}^p_{i_1,i_2,\ldots,i_k}
\in
\ker(\Phi).
\]
Let \(L\) be the ideal of \(\tUi\) generated by the relations
\cref{Serre1,Serre2,Serre3} for this choice of
\(\widetilde{\Rem}^p_{i_1,i_2,\ldots,i_k}\). By \cref{eq:EjBk,KEF}, we have
\(
L\subseteq \ker(\Phi).
\)
Hence \(\Phi\) induces a well-defined surjective homomorphism
\[
\tUi/L\longrightarrow \Ui.
\]
As in the proof of \cite[Theorem~4.14]{SW25}, one shows that \(\tUi/L\) is spanned by the elements \(\tB_J\) for \(J\in \mathcal{J}\). It then follows from \cref{basisthm} that the above surjection is also injective. Finally, the formal independence of the choice of \(\kappa_i\) for \(i\in \I_\ns\) has already been established in \cref{lem:remain}.
\end{proof}

For the rest of this section, we shall determine 
\[
 {\Rem}^p_{i_1,i_2,\ldots,i_k} =\Phi( \widetilde{\Rem}^p_{i_1,i_2,\ldots,i_k} )
\]
for any super Satake diagram of type A (and hence $p$ runs over the ones in \cref{Serrepoly}) using direct computations and the projection technique \cref{key}.

\begin{rem}
    The discussions in Subsections 3.2 and 3.3 are valid for quantum supersymmetric pairs of {\bf arbitrary} types.
\end{rem}

\subsection{Determining ${\Rem}^{p_0}_{i}$ and ${\Rem}^{p_1}_{i,j}$}
Recall the quantum Serre polynomial \[p_0=p_0(x_i)=x_i^2\] from \cref{Serrepoly} for any $i\in \Iodd$. By \cref{def:superad}(4), if $\tau i=i$, then there exists an even simple root $j(i)\in\I_\bu$ such that $a_{ij}\neq 0$. 

\begin{prop}
    \label{prop:remp0}
    Suppose $i\in \Iodd$. The elements  ${\Rem}^{p_0}_{i}=\Phi(\widetilde{\Rem}^{p_0}_{i})$ from \cref{presentation} are given by
    \begin{gather}
    \label{remp0}
        {\Rem}^{p_0}_{i}=-\delta_{i,\tau i} a_{i,j(i)} q_i^{a_{i,j(i)}}\va_i E_{j(i)}
    \end{gather}
\end{prop}
\begin{proof}
    Since $|i|=\overline{1}$, we must have $i\in \I_\circ$ by \cref{def:superad}. When $i\neq \tau i$, summands of $\Delta(B_i^2)$ involving the term $\mathcal{Q}$ in \cref{comultiBi} will never survive under $P_{-2\alpha_i}\circ \pi_{0,0}$. Also, since $i\neq \tau i$, the second tensor factor of $\Delta(B_i^2)-B_i^2\otimes K_{-2\alpha_i}$ can never have the same number of $E_k$'s and $F_k$'s for any $k\in \I$. Hence by \cref{key} and \cref{comultiBi} we must have ${\Rem}^{p_0}_{i}$=0 when $i\neq \tau i$.

    On the other hand, when $i=\tau i$, we suppose that $j=j(i)$ is an even simple root in $\I_\bu$ such that $a_{ij}\neq 0$. Since $\fg$ is of type A, such a $j$ is unique. Then by definition we have \[
    B_i=F_i+\va_i T_j(E_i) K_i^{-1}.
    \]
    By a direct calculation, one sees that 
    \begin{gather*}
        B_i^2\overset{\cref{KEF},\cref{square}}{=}\va_i\Big(
           E_j[E_i,F_i]-q_i^{a_{ij}}[E_i,F_i]E_j
        \Big)K_2^{-1} 
        \overset{\cref{KEF},\cref{square}}{=}\frac{1-q_i^{2a_{ij}}}{q_i-q_i^{-1}} E_j=[-a_{ij}]\va_i q_i^{a_{ij}}E_j.
    \end{gather*} 
    Since $a_{ij}=\pm 1$, we have $[-a_{ij}]=-a_{ij}$. This concludes the proof.
\end{proof}

Next we recall the quantum Serre polynomial \[p_1=p_1(x_i,x_j)=[x_i,x_j]\] from \cref{Serrepoly} for any $i,j\in \I$ such that $i\nsim j$. Hence we have $a_{ij}=0$.
\begin{prop}
    \label{prop:remp1}
    Suppose $i,j\in \I$ such that $i\nsim j$. The elements ${\Rem}^{p_1}_{i,j}=\Phi(\widetilde{\Rem}^{p_1}_{i,j})$ from \cref{presentation} are given by
    \begin{gather}
    \label{remp1}
        {\Rem}^{p_1}_{i,j}=
        \begin{cases}
            \frac{\va_i \mathcal{Z}_i-\va_j \mathcal{Z}_j}{q_i-q_i^{-1}},& \text{if }i=\tau j \in \I_\circ, \\
            0, & \text{otherwise}.
        \end{cases}
    \end{gather}
\end{prop}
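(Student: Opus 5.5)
We compute $\Xi=[B_i,B_j]=B_iB_j-(-1)^{|i||j|}B_jB_i$ through the projection formula \cref{key} with $\lambda=\alpha_i+\alpha_j$, using the coproduct description \cref{comultiBi} of \cref{lem:remain34}. Since $\Delta$ is a superalgebra homomorphism, $\Delta(\Xi)$ is the supercommutator of $\Delta(B_i)$ and $\Delta(B_j)$. Expanding each factor into its four pieces $B_i\otimes K_i^{-1}$, $1\otimes F_i$, $\va_i\mathcal{Z}_i\otimes E_{\tau i}K_i^{-1}$ and $\mathcal{Q}$, the product $(B_i\otimes K_i^{-1})(B_j\otimes K_j^{-1})$ and its reverse together give exactly $\Xi\otimes K_{-\lambda}$, because $K_i^{-1}$ and $K_j^{-1}$ commute. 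So we only have to find which of the remaining cross terms survive $\id\otimes(P_{-\lambda}\circ\pi_{0,0})$.

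The selection is done by weight counting on the second tensor factor. To lie in the image of $\pi_{0,0}$, the second factor must have total $\Ualg^+$-weight equal to its $\Ualg^-$-weight. The pieces $\mathcal{Q}$ carry $\Ualg^+$-weight $\gamma>\alpha_{\tau i}$, while the available negative weight is at most $\alpha_i+\alpha_j$. Pairing a $\mathcal{Q}$ term with $1\otimes F_j$ would force $\gamma=\alpha_j$, contradicting $\gamma>\alpha_{\tau i}$ unless $\tau i=j$, and even then the inequality is strict. Terms with two $E$'s have no $F$'s at all. Hence the only candidates are the mixed products $(1\otimes F_j)(\va_i\mathcal{Z}_i\otimes E_{\tau i}K_i^{-1})$, the analogous term with $i$ and $j$ swapped, and their reversed-order counterparts inside the supercommutator. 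These balance only when $\tau i=j$. If $i,j\in\I_\bu$ the $B$'s are just $F$'s and the remainder is $0$ by \cref{remp1} trivially. If $\tau i\ne j$, nothing survives and ${\Rem}^{p_1}_{i,j}=0$. One also checks that $i=\tau j$ with $i\nsim j$ can only happen for $i,j\in\I_\circ$, since $\tau$ preserves $\I_\bu$.

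In the case $j=\tau i$, the surviving second factors are of the form $F_jE_jK_i^{-1}$ and $E_jK_i^{-1}F_j$ (and the same with $i$ in place of $j$). We rewrite each in $\Ualg^+\Ualg^0 S(\Ualg^-)$ order using \cref{skunk}; the sign and $q$-power bookkeeping is easiest if we first commute $K_i^{-1}$ through $F_j$ using $a_{ij}=0$. The commutator $[E_j,F_j]=(K_j-K_j^{-1})/(q_j-q_j^{-1})$ is the part that contributes to $\Ualg^0$. We then apply $P_{-\lambda}$. Among $K_jK_i^{-1}$ and $K_j^{-1}K_i^{-1}$, only $K_{-\alpha_i-\alpha_j}$ is kept, and the counit then gives $\pm1/(q_j-q_j^{-1})$. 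The $E_jF_j$ parts drop out because their $\pi_{0,0}$-image vanishes, or because $\epsilon$ kills them after projection. Collecting the terms from both orders of the supercommutator, the $i$-piece should give $\va_i\mathcal{Z}_i/(q_i-q_i^{-1})$ and the $j$-piece $-\va_j\mathcal{Z}_j/(q_j-q_j^{-1})$. Finally, $\tau$ is parity preserving and here $d_i=d_j$ (the roots are $\tau$-related), so $q_i=q_j$, which yields \cref{remp1} once we multiply by the overall minus sign in \cref{key}.

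The main obstacle is this last sign bookkeeping in the super case. It requires tracking the super-tensor sign $(-1)^{|x||y|}$ when multiplying $(1\otimes F_j)(\mathcal{Z}_i\otimes E_jK_i^{-1})$, since $\mathcal{Z}_i$ may carry parity $|i|$ from the $T_{w_\bu}$ part, together with the sign $(-1)^{|i||j|}$ in the supercommutator. I would first check that the odd signs cancel pairwise: the parity of $\mathcal{Z}_i$ is $|i|+|j|=0$ modulo $2$ when $j=\tau i$, as its weight is $\alpha_i-w_\bu\alpha_{\tau i}$ restricted appropriately. This should leave the clean difference stated.
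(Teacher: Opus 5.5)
Your route is the paper's: apply \cref{key} with $\lambda=\alpha_i+\alpha_j$, use \cref{comultiBi} and weight counting to see that only $\va_j\mathcal{Z}_j\otimes F_iE_iK_j^{-1}$ (from $\Delta(B_i)\Delta(B_j)$) and $\va_i\mathcal{Z}_i\otimes F_jE_jK_i^{-1}$ (from $\Delta(B_j)\Delta(B_i)$) survive $P_{-\lambda}\circ\pi_{0,0}$, then normal-order with \cref{skunk}. That part is fine, apart from two small slips. First, $[F_i,F_j]=0$ comes from \cref{square}, not \cref{remp1}. Second, $i=\tau j$, $i\nsim j$ can occur with $i,j\in\I_\bu$; $\tau(\I_\bu)=\I_\bu$ only rules out the mixed case.

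The genuine gap is the last step. The claim $d_i=d_j$ is false. By \cref{didtaui}, $d_{\tau i}=(-1)^{|i|}d_i$, so for odd $i$ one has $q_j=q_i^{-1}$ and $q_j-q_j^{-1}=-(q_i-q_i^{-1})$. For instance, $d_1=-1$ and $d_5=1$ in the first $\fgl(4|2)$ diagram of the paper. You only asserted what the two pieces ``should give'', and it is not even clear whether they are meant before or after the minus sign in \cref{key}. So this false identity is exactly what carries you to \cref{remp1}.

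Do the bookkeeping you flagged. From $F_iE_i=(-1)^{|i|}\bigl(E_iF_i-\frac{K_i-K_i^{-1}}{q_i-q_i^{-1}}\bigr)$ you get $P_{-\lambda}\pi_{0,0}(F_iE_iK_j^{-1})=(-1)^{|i|}K_{-\lambda}/(q_i-q_i^{-1})$, and likewise for $F_jE_jK_i^{-1}$ with $q_j$. Together with the factor $-(-1)^{|i||j|}$ and the overall sign in \cref{key}, this gives
\[
\Rem^{p_1}_{i,j}=\frac{\va_i\mathcal{Z}_i}{q_j-q_j^{-1}}-(-1)^{|i|}\frac{\va_j\mathcal{Z}_j}{q_i-q_i^{-1}}
=\frac{\va_i\mathcal{Z}_i-\va_j\mathcal{Z}_j}{q_j-q_j^{-1}}
=(-1)^{|i|}\,\frac{\va_i\mathcal{Z}_i-\va_j\mathcal{Z}_j}{q_i-q_i^{-1}}.
\]
So the $\mathcal{Z}_i$-term carries the denominator $q_j-q_j^{-1}$ (it comes from $[E_j,F_j]$), not $q_i-q_i^{-1}$. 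The argument therefore yields \cref{remp1} only when $|i|=\overline{0}$; for odd $i$ the remainder is the negative of the displayed one.

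A direct computation confirms this. In that $\fgl(4|2)$ example take $\I_\bu=\varnothing$, $\va_1=\va_5=\va$, $\mathcal{Z}_1=K_5K_1^{-1}$ and $\mathcal{Z}_5=K_1K_5^{-1}$. Then $[B_1,B_5]=\va\frac{K_1-K_1^{-1}}{q_1-q_1^{-1}}K_5^{-1}+\va\frac{K_5-K_5^{-1}}{q_5-q_5^{-1}}K_1^{-1}=\frac{\va(\mathcal{Z}_1-\mathcal{Z}_5)}{q_5-q_5^{-1}}$.

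The paper's own proof reaches \cref{remp1} because its step $(\id\otimes P_{-\lambda}\pi_{0,0})(\va_j\mathcal{Z}_j\otimes F_iE_iK_j^{-1})=\va_j\mathcal{Z}_j\otimes K_{-\lambda}/(q_i-q_i^{-1})$ drops the same factor $(-1)^{|i|}$, before using \cref{didtaui} correctly. So rather than patching your last step to hit the displayed formula, you should state the odd case with denominator $q_{\tau i}-q_{\tau i}^{-1}$.
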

\begin{proof}
    Recall the element $\mathcal{Z}_i$ from \cref{comultiBi}. It is uniquely determined by the summand in $\Delta(T_{w_\bu}(E_{\tau i})K_i^{-1})$ whose second tensor factor is $E_{\tau i}K_i^{-1}$. By \cref{key}, we have
    \[
     {\Rem}^{p_1}_{i,j}=-(\id\otimes\epsilon)(\id\otimes (P_{-\alpha_i-\alpha_j}\circ \pi_{0,0})) (\Delta([B_i,B_j])-[B_i,B_j]\otimes K_{-\alpha_i-\alpha_j}).
    \]
    If $i\neq \tau j$, then the second tensor factor of $(\Delta([B_i,B_j])-[B_i,B_j]\otimes K_{-\alpha_i-\alpha_j})$ cannot have the same number of $E_k$'s and $F_k$'s for any $k\in \I$, and hence we must have ${\Rem}^{p_1}_{i,j}=0$. If $i=\tau j\in \I_\bu$, then we have $j\in \I_\bu$ as well. Hence $B_i=F_i$ and $B_j=F_j$ and it follows from \cref{square} that $[F_i,F_j]=0$ when $a_{ij}=0$.
    
    Now suppose $i= \tau j\in \I_\circ$. Then we have
    \begin{align*}
       &\id\otimes (P_{-\alpha_i-\alpha_j}\circ \pi_{0,0})( \Delta(B_iB_j)-B_iB_j\otimes K_{-\alpha_i-\alpha_j})\\=& \id\otimes (P_{-\alpha_i-\alpha_j}\circ \pi_{0,0}) (\va_j\mathcal{Z}_j\otimes F_iE_i K_j^{-1}) \\
       =& \id\otimes (P_{-\alpha_i-\alpha_j}\circ \pi_{0,0}) (\va_j\mathcal{Z}_j\otimes \frac{K_i-K_i^{-1}}{q_i-q_i^{-1}} K_j^{-1}) 
       =\frac{\va_j\mathcal{Z}_j\otimes K_{-\alpha_i-\alpha_j}}{q_i-q_i^{-1}}.
    \end{align*}
    In particular, by our assumptions we have (see for example \cite[(3.10)]{SW25})
\begin{gather}
    \label{didtaui}
    d_i=(-1)^{|i|}d_{\tau i},\ \forall i\in \I
\end{gather} in this case. Thus by \cref{key} we have
\[
 {\Rem}^{p_1}_{i,j}=-\frac{\va_j \mathcal{Z}_j}{q_i-q_i^{-1}}+(-1)^{|i|}\frac{\va_i\mathcal{Z}_i}{q_j-q_j^{-1}}\overset{\cref{didtaui}}{=}\frac{\va_i \mathcal{Z}_i-\va_j \mathcal{Z}_j}{q_i-q_i^{-1}},
\]
where the last equality follows from the fact that $q_i-q_i^{-1}=(-1)^{|i|}(q_j-q_j^{-1})$.
\end{proof}

\subsection{Determining $\Rem_{i,j}^{p_2}$}
Recall the quantum Serre polynomial
\[
p_2=p_2(x_i,x_j)=[x_i,[x_i,x_j]_{q_i^{-1}}]_{q_i}
\]
from \cref{Serrepoly} for all $i\in \Ieven$ and $j\in I$ such that $i\sim j$.
\begin{prop}
    \label{prop:remp2}
    Suppose $i\in \Ieven$ and $j\in I$ such that $i\sim j$. The elements ${\Rem}^{p_2}_{i,j}=\Phi(\widetilde{\Rem}^{p_2}_{i,j})$ from \cref{presentation} are given by
    \begin{gather}
    \label{remp2}
        {\Rem}^{p_2}_{i,j}=
        \begin{cases}
            q_i\va_i B_j,& \text{if } i=\tau i \in \I_\circ, j\in \I_\circ \\
           \va_i \Big(\frac{q_i^2B_j\mathcal{Z}_i-\mathcal{Z}_iB_j}{q_i-q_i^{-1}}+\frac{q_i+q_i^{-1}}{q_j-q_j^{-1}}W_{i,j}K_j \Big),& \text{if } i=\tau i \in \I_\circ, j\in \I_\bu \\
           -[2]_{q_i}(q_i\va_{j}\mathcal{Z}_j+q_i^{-2}\va_i\mathcal{Z}_i)B_i, & \text{if } i=\tau j \in \I_\circ \\ 
            0, & \text{otherwise}.
        \end{cases}
    \end{gather}
\end{prop}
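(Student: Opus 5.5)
We follow the pattern of \cref{prop:remp0,prop:remp1}. Put $\lambda=2\alpha_i+\alpha_j$ and $\Xi=p_2(\underline B)=[B_i,[B_i,B_j]_{q_i^{-1}}]_{q_i}$, expanded as $B_i^2B_j-[2]_{q_i}B_iB_jB_i+B_jB_i^2$ (no signs arise, since $i$ is even). We compute $\Rem^{p_2}_{i,j}$ through \cref{key}. Substituting \cref{comultiBi} (and \cref{comultiBi2} when $j\in\I_\bu$) into $\Delta(\Xi)$ and expanding, we keep only those summands whose second tensor factor has $\pi_{0,0}$-component with nonzero projection to $\Ualg^+K_{-\lambda}S(\Ualg^-)$. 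Such a second tensor factor must pair every $E_k$ with an $F_k$. It must also carry total $K$-weight $-\lambda$ once the $E_kF_k$ are rewritten via \cref{skunk}. We then split into cases according to which $E$'s are available, namely $E_{\tau i}$ and $E_{\tau j}$ from the $\va\mathcal Z\otimes E_\tau K^{-1}$ terms, together with $T_j(E_i)$ when $j\in\I_\bu$.

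\emph{Case analysis.} If $\tau i\notin\{i,j\}$ and we are not in the situation $i=\tau i$, then no $E$ in the second factor can be matched against the available $F_i,F_j$. Hence the remainder is $0$, exactly as in \cref{prop:remp1}. The case $i,j\in\I_\bu$ is the ordinary Serre relation among the $F$'s. The case $i\in\I_\bu$, $j\in\I_\circ$ should also give $0$: there $B_i=F_i$, the relation \cref{eq:EjBk} shows the relevant $\mathcal Z$ terms commute past appropriately, and the $\mathcal Q$-parts carry too much weight.

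For $i=\tau i\in\I_\circ$, $j\in\I_\circ$, the surviving terms have second factor a word in $F_i,F_i,F_j$ with exactly one $E_i K_i^{-1}$ inserted. The first factor is then $B_j$ times a product coming from $\mathcal Z_i$. Since $j$ is white and adjacent, I expect $\mathcal Z_i$ to reduce to a scalar (no black neighbour on the relevant side). We then sum the contributions of $E_iF_i\mapsto (K_i-K_i^{-1})/(q_i-q_i^{-1})$ in the three orderings. The $q$-binomial coefficients should collapse to $q_i\va_iB_j$, as in Kolb's even computation \cite{Kol14}.

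When $j\in\I_\bu$, two kinds of terms survive. First there are the analogous single-$E_i$ terms, which produce $(q_i^2B_j\mathcal Z_i-\mathcal Z_iB_j)/(q_i-q_i^{-1})$; the noncommutativity of $\mathcal Z_i$ with $B_j=F_j$ is kept track of by \cref{eq:EjBk}. Second, there is the $W_{i,j}K_j\otimes T_j(E_i)K_i^{-1}$ term from \cref{lem:remain35}, paired with $F_iF_j$ in the second factor. We evaluate $\pi_{0,0}$ of $T_j(E_i)F_iF_j$-type words and project to $K_{-\lambda}$; this yields the coefficient $(q_i+q_i^{-1})/(q_j-q_j^{-1})$.

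When $i=\tau j\in\I_\circ$, the surviving second factors contain one $E_iK_j^{-1}$ (from $\va_j\mathcal Z_j$) or one $E_jK_i^{-1}$ (from $\va_i\mathcal Z_i$), matched with $F_i$ or $F_j$ respectively, while the remaining $B_i$ stays in the first factor. Collecting the terms with their $q$-powers from commuting $K$'s, and using \cref{didtaui}, should give $-[2]_{q_i}(q_i\va_j\mathcal Z_j+q_i^{-2}\va_i\mathcal Z_i)B_i$.

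The main obstacle is the $j\in\I_\bu$ subcase. Here one has to control which parts of $\mathcal Q'$ may survive: only $\gamma=\alpha_i+\alpha_j$ can, which is precisely why \cref{lem:remain35} isolates it. One also has to compute the projection of $T_j(E_i)F_iF_j$-type products to $\Ualg^+K_{-\lambda}S(\Ualg^-)$ carefully, including the $\epsilon$-evaluation. I would do this by writing $T_j(E_i)=E_jE_i-q^{(\alpha_i,\alpha_j)}E_iE_j$, commuting the $E$'s to the right through the $F$'s via \cref{skunk}, and retaining only the fully contracted $K$-terms.
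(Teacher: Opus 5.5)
Your proposal is correct and follows the paper's route. The paper likewise applies \cref{key} with \cref{comultiBi,comultiBi2}, discards every summand whose second tensor factor has unbalanced $E$/$F$-weight (this handles $i\in\I_\bu$ and $\tau i\notin\{i,j\}$), and then does the remaining local computations as in \cite[Thm.~7.4 and Thm.~7.8, Case~2]{Kol14}, to which the paper simply defers; carrying out your plan does reproduce all the stated coefficients.

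There are only minor slips in the write-up. For $i=\tau i$, $j\in\I_\circ$, the surviving second factors are orderings of $F_i$, $E_iK_i^{-1}$ and $K_j^{-1}$, so $B_j$ sits in the first factor and no $F_j$ occurs. The nonzero contractions under $\pi_{0,0}$ come from $F_iE_i$, not from $E_iF_i$. Finally, \cref{eq:EjBk} is not needed, since $\mathcal Z_iB_j$ and $B_j\mathcal Z_i$ appear as separate terms in the formula.
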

\begin{proof}
Since we are in type~A, we have \(a_{ij}=\pm 1\). If \(i\in \I_\bu\), then \cref{key} implies that
\(
\Rem^{p_2}_{i,j}=0.
\)
We may therefore assume that \(i\in \I_\circ\). If \(i\neq \tau i\), then \cref{key} implies that
\(
\Rem^{p_2}_{i,j}=0
\)
unless \(j=\tau i\).

It remains to consider the cases in which either \(i=\tau j\in \I_\circ\), or \(i=\tau i\in \I_\circ\).  
For the cases when $j\in I_\circ$, the formulas follow from a calculation analogous to that in \cite[Theorem~7.4, Case~2]{Kol14}.  
Finally, the remaining case, where \(i=\tau i\in \I_\circ\) and \(j\in \I_\bu\), follows from a calculation analogous to that in \cite[Theorem~7.8, Case~2]{Kol14}.
\end{proof}

We note that, the elements $\mathcal{Z}_i$ and $W_{i,j}$ can be written quite explicitly for our type A super Satake diagrams in \cref{eq:splitA,AIIIdiagram}. In fact, assuming that $i\in \Ieven\cap \I_\circ$ and $i\sim j$, we write down the possible subdiagrams and the corresponding $\mathcal{Z}_i$ and $W_{i,j}$ as follows.

\begin{itemize}
    \item $i=\tau i\in I_\circ$, $j\in I_\circ$: All the possible subdiagrams are 
    \[
    \begin{tikzpicture}
            \node (-2) [circ,
                label={below:{$i$}}] at (-2,0) {};
            \node (-1) [circ,
                label={below:{$j$}}] at (-1,0) {};
            \path (-2) edge (-1);
        \end{tikzpicture}
        \qquad
            \begin{tikzpicture}
            \node (-2) [circ,
                label={below:{$j$}}] at (-2,0) {};
            \node (-1) [circ,
                label={below:{$i$}}] at (-1,0) {};
            \path (-2) edge (-1);
        \end{tikzpicture}
        \qquad
        \begin{tikzpicture}
            \node (-2) [circslash,
                label={below:{$j$}}] at (-2,0) {};
            \node (-1) [circ,
                label={below:{$i$}}] at (-1,0) {};
             \node (0) [circslash,
                label={below:{$\tau j$}}] at (0,0) {};
            \path (-2) edge (-1)
                    (-1) edge (0);
             \draw[<->, dashed, bend left=15] (-2, .3) to (0, .3); 
        \end{tikzpicture}
           \qquad
        \begin{tikzpicture}
            \node (-2) [circslash,
                label={below:{$\tau j$}}] at (-2,0) {};
            \node (-1) [circ,
                label={below:{$i$}}] at (-1,0) {};
             \node (0) [circslash,
                label={below:{$j$}}] at (0,0) {};
            \path (-2) edge (-1)
                    (-1) edge (0);
             \draw[<->, dashed, bend left=15] (-2, .3) to (0, .3); 
        \end{tikzpicture}
    \]
    In the above four cases we always have $\mathcal{Z}_i=1$.
    \item $i=\tau i\in I_\circ,j\in \I_\bu$:
     The only possible subdiagram is
     \[
      \begin{tikzpicture}
            \node (-2) [circblack,
               label={below:{$i-1$}}] at (-2,0) {};
            \node (-1) [circ,
                label={below:{$i$}}] at (-1,0) {};
             \node (0) [circblack,
                label={below:{$i+1$}}] at (0,0) {};
            \path (-2) edge (-1)
                    (-1) edge (0);
        \end{tikzpicture}
     \]
     with $j$ being one of the black nodes. In this case we have
     \(B_i=F_i+\va_i T_{i-1}T_{i+1}(E_i)K_i^{-1}\) and by calculating $\Delta(T_{i-1}T_{i+1}(E_i)K_i^{-1})$ directly one gets
     \[
     \mathcal{Z}_i=-(1-q_i^{-2})^2E_{i-1}E_{i+1},\qquad W_{i,j}=-(1-q_i^{-2})E_{2i-j}.
     \]
     This aligns with the purely even case considered in \cite[Ex. 7.9]{Kol14}.
     \item  $i=\tau j\in I_\circ$:
     The only possible subdiagrams are
     \[
      \begin{tikzpicture}
            \node (-2) [circ,
               label={below:{$i$}}] at (-2,0) {};
            \node (-1) [circ,
                label={below:{$j$}}] at (-1,0) {};
            \path (-2) edge (-1);
             \draw[<->, dashed, bend left=15] (-2, .3) to (-1, .3); 
        \end{tikzpicture}
        \qquad
         \begin{tikzpicture}
            \node (-2) [circ,
               label={below:{$j$}}] at (-2,0) {};
            \node (-1) [circ,
                label={below:{$i$}}] at (-1,0) {};
            \path (-2) edge (-1);
             \draw[<->, dashed, bend left=15] (-2, .3) to (-1, .3); 
        \end{tikzpicture}
     \]
     In both cases we have $\mathcal{Z}_i=K_{\tau i}K_i^{-1}$ and $\mathcal{Z}_j=K_{\tau j}K_j^{-1}$.
\end{itemize}

\subsection{Determining $\Rem_{i,j,k}^{p_3}$}
So far, we have determined the elements in \cref{remp0,remp1,remp2}. The formulas obtained there are stated in a general form and apply to quantum supersymmetric pairs of arbitrary type, under the assumption that \(a_{ij}=\pm 1\).  
In fact, the formulas in \cref{remp1,remp2} are similar to their purely even counterparts.  

Recall $p_3$ from \cref{Serrepoly}. In this section, we turn to the determination of the more complicated elements
\(
\Rem_{i,j,k}^{p_3},
\)
which are genuinely new to the super setting. Since it is rather cumbersome to describe the conditions on the three indices \(i,j,k\) directly, we present the nontrivial \( \Rem_{i,j,k}^{p_3} \) according to the possible subdiagrams appearing in \cref{eq:splitA,AIIIdiagram}. To simplify the notation in the diagrams, we use a dashed arrow attached to a node to indicate that the node is not fixed by $\tau$.  
For example, we may draw
\[
          \begin{tikzpicture}
            \node (-2) [circslash,
               label={below:{$i$}}] at (-2,0) {};
            \node (-1) [circslash,
                label={below:{$j$}}] at (-1,0) {};
            \path (-2) edge (-1);
             \draw[<-, dashed, bend left=15] (-2, .3) to (-1.5, .5); 
             \draw[<-, dashed, bend left=15] (-1, .3) to (-.5, .5); 
        \end{tikzpicture}
\]
to mean that we have $i\neq \tau i$ and $j\neq \tau j$. For any $i\neq \tau i$, we define $\ck_i:=K_iK_{\tau i}^{-1}$ such that $\ck_{\tau i}=\ck_i^{-1}$.

\begin{prop}
\label{prop:remp3}
    Suppose $j\in \Iodd$, $i=j-1$ and $k=j+1$. Then the elements ${\Rem}^{p_3}_{i,j,k}=\Phi(\widetilde{\Rem}^{p_3}_{i,j,k})$ from \cref{presentation} are given by
    \begin{gather}
    \label{remp3}
        {\Rem}^{p_3}_{i,j,k}=
        \begin{cases}
            0, & \{i,j,k\}\cap \{\tau i,\tau j,\tau k\}\in \{\varnothing,\{i\},\{k\}\}, \\
        -(-1)^{|i|}\Big(q_{\tau j}^{-1}\va_{\tau j}[B_{j},B_{i}]_{q_j}\ck_{j}+\va_{j}[B_{j},B_{i}]_{q_{\tau j}}\ck_{\tau j}\Big)  &   \begin{tikzpicture}[anchorbase,scale=.7]
            \node (-2) [circslash,
                label={below:{$i$}}] at (-2,0) {};
            \node (-1) [circcross,
                label={below:{$j$}}] at (-1,0) {};
             \node (0) [circcross,
                label={below:{$k$}}] at (0,0) {};
            \path (-2) edge (-1)
                    (-1) edge (0);
             \draw[<->, dashed, bend left=15] (-1, .3) to (0, .3); 
             \draw[<-, dashed, bend left=15] (-2, .3) to (-1.5, .5);
        \end{tikzpicture} \\
        (-1)^{|k|}
\left(
q_j^{-1}\va_{\tau j}[B_j,B_k]_{q_{\tau j}}\ck_j
+
\va_j[B_j,B_k]_{q_j}\ck_{\tau j}
\right)
        &  \begin{tikzpicture}[anchorbase,scale=.7,xscale=-1]
            \node (-2) [circslash,
                label={below:{$k$}}] at (-2,0) {};
            \node (-1) [circcross,
                label={below:{$j$}}] at (-1,0) {};
             \node (0) [circcross,
                label={below:{$i$}}] at (0,0) {};
            \path (-2) edge (-1)
                    (-1) edge (0);
             \draw[<->, dashed, bend left=15] (-1, .3) to (0, .3); 
             \draw[<-, dashed, bend left=15] (-2, .3) to (-1.5, .5);
        \end{tikzpicture} \\
        q_j^{-1}\va_j B_k K_i^{-1}
        & 
        	\begin{tikzpicture}[anchorbase,scale=.7]
            \node (-1) [circ,fill=black,
                label={below:{$i$}}] at (-.5,0) {};
            \node (1) [circcross,
                label={below:{$j$}}] at (.5,0) {};
            \node (2) [circ,
                label={below:{$k$}}] at (1.5,0) {};
            \path
                (-1) edge (1)
                (1) edge (2);
        \end{tikzpicture} \\
        q_j\va_j B_i K_k^{-1}
        &
        \begin{tikzpicture}[anchorbase,scale=.7]
            \node (-1) [circ,
                label={below:{$i$}}] at (-.5,0) {};
            \node (1) [circcross,
                label={below:{$j$}}] at (.5,0) {};
            \node (2) [circ,fill=black,
                label={below:{$k$}}] at (1.5,0) {};
            \path
                (-1) edge (1)
                (1) edge (2);
        \end{tikzpicture} \\
     -q_j^{-1}\va_j B_kK_i^{-1}   & 
        	\begin{tikzpicture}[anchorbase,scale=.7]
            \node (-2) [circ,fill=black,
                label={below:{$i$}}] at (-1.5,0) {};
            \node (-1) [circcross,
                label={below:{$j$}}] at (-.5,0) {};
            \node (1) [circcross,
                label={below:{$k$}}] at (.5,0) {};
            \node (2) [circ,fill=black,
                label={below:{}}] at (1.5,0) {};
            \path
                (-2) edge (-1)
                (-1) edge (1)
                (1) edge (2);
        \end{tikzpicture} \\
     -q_j\va_j B_iK_k^{-1}   &
        \begin{tikzpicture}[anchorbase,scale=.7]
            \node (-2) [circ,fill=black,
               ] at (-1.5,0) {};
            \node (-1) [circcross,
                label={below:{$i$}}] at (-.5,0) {};
            \node (1) [circcross,
                label={below:{$j$}}] at (.5,0) {};
            \node (2) [circ,fill=black,
                label={below:{$k$}}] at (1.5,0) {};
            \path
                (-2) edge (-1)
                (-1) edge (1)
                (1) edge (2);
        \end{tikzpicture} 
        \end{cases}
    \end{gather}
\end{prop}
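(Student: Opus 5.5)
We will follow the strategy used for \cref{prop:remp0,prop:remp1,prop:remp2}. Set $\lambda=\alpha_i+2\alpha_j+\alpha_k$ and $\Xi=p_3(B_i,B_j,B_k)$, and evaluate the right-hand side of \cref{key}. Expanding $\Delta(\Xi)$ with \cref{comultiBi} (and \cref{comultiBi2} when a $\tau$-fixed white node meets a black node), every summand has a second tensor factor that is a product of four factors. Each factor is one of $K^{-1}$, $F_\bullet$, $E_{\tau\bullet}K^{-1}$, or a term of $\mathcal{Q}$. Such a summand survives $P_{-\lambda}\circ\pi_{0,0}$ only if its $E$'s and $F$'s have the same total weight. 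So a nonzero contribution requires that some $F_a$ with $a\in\{i,j,k\}$ be matched by an $E_{\tau b}$ with $\tau b=a$, or by a higher term of weight $\gamma>\alpha_{\tau b}$.

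\textbf{Step 1 (vanishing case).} Suppose $\{i,j,k\}\cap\{\tau i,\tau j,\tau k\}\subseteq\{i\}$ or $\subseteq\{k\}$. We check that no $E$-weight can cancel an $F$-weight built from $\alpha_i,\alpha_j,\alpha_k$, except a single self-matching at an endpoint $i=\tau i$ or $k=\tau k$. In that case the surviving terms reduce, after using the Serre relation \cref{Serre} in $\Ualg^-$, to a contribution proportional to $p_3(\underline F)$-type expressions. Concretely, the $F$-part that remains is the Serre polynomial with one variable deleted, and it must still have weight $\lambda$ after cancellation. A weight count shows that $\pi_{0,0}$ kills it, because the remaining $F$'s cannot be cancelled. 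Hence $\Rem=0$. The one point needing care is $i=\tau i$ white with $j$ odd: we argue that $E_iF_i$ appears only inside a factor that is multiplied by an uncancelled $F_j^2$-type weight.

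\textbf{Step 2 (sAIII cases, $\tau j=k$ or $\tau j=i$).} Take $\tau j=k$ with $i\ne\tau i$. The surviving terms are those where one $B_j$ contributes $\va_j\mathcal{Z}_j\otimes E_kK_j^{-1}$ paired against $F_k$, or $B_k$ contributes $\va_k\mathcal{Z}_k\otimes E_jK_k^{-1}$ paired against one of the two $F_j$'s. Here $\mathcal{Z}_j=K_{\tau j}K_j^{-1}$ as in the $p_2$ computation, so these produce $\ck$ factors. In the first tensor factor the remaining $B$'s form $[B_j,B_i]$ with various $q$-powers. We then apply $[E_a,F_a]=(K_a-K_a^{-1})/(q_a-q_a^{-1})$ in the second factor and take $P_{-\lambda}$, keeping only the $K_a^{-1}$ part. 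Collecting signs from the super-commutators and using \cref{didtaui} together with $q_j-q_j^{-1}=-(q_k-q_k^{-1})$ should give the stated combination. The case $\tau j=i$ follows by the mirror symmetry $i\leftrightarrow k$. Relating $p_3(x_i,x_j,x_k)$ to $p_3(x_k,x_j,x_i)$ up to a scalar is where the different $q$-exponents come from.

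\textbf{Step 3 (black boundary nodes).} Here the relevant node is $j=\tau j$, and the definition forces $j$ to be adjacent to a black node. If $i\in\I_\bu$, then $B_i=F_i$ and $B_j=F_j+\va_jT_i(E_j)K_j^{-1}$. Rather than projecting, we will compute directly, as in \cref{prop:remp0}. We use $B_j^2=-a_{ji}q_j^{a_{ji}}\va_jE_i$ from \cref{prop:remp0} and the commutation relation \cref{eq:EjBk} of $E_i$ with $B_i=F_i$. Expanding $[[[F_i,B_j]_{q_j},B_k]_{q_k},B_j]$ and moving all the $B_j$'s together, only terms involving $B_j^2$ or $[E_i,F_i]$ survive, and the result is a multiple of $B_kK_i^{-1}$. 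The other subcases (black $k$, and the two odd-$i$ or odd-$k$ variants) follow in the same way or by the symmetry $i\leftrightarrow k$.

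\textbf{Main obstacle.} We expect the main difficulty to be the sign and $q$-power bookkeeping in Step 2. Within the super tensor product, the parities of $j$, $k$ and possibly odd $i$ interact with $\mathcal{Z}_j$ and the $\ck$'s. To control this, we will first treat the case of even $i$ and then track how $(-1)^{|i|}$ enters.
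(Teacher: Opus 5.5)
\textbf{The gap is in Step 3.} Step 1 matches the paper's argument. Step 2 (using \cref{key} instead of the paper's sixteen-term expansion) is a sound alternative. But Step 3 relies on ``moving the two $B_j$'s together so that only $B_j^2$- and $[E_i,F_i]$-terms survive'', and no relation lets you do that.

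Six of the eight monomials of $p_3(x_i,x_j,x_k)$ have the two copies of $x_j$ separated by $x_i$ or $x_k$. The only reordering available among $F_i,B_j,B_k$ is $F_iB_k=B_kF_i$ (the $p_1$-relation). Nothing in $\Ui$ rewrites $B_jB_k$ or $B_jF_i$ in the opposite order, and the same holds for $F_j,F_k$ in $\Ualg^-$; the $p_2$-relations do not help because they are quadratic in the even variable.

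Concretely, take Case~II ($|i|=|k|=\overline 0$, $q_k=q_j^{-1}$). Modulo $x_j^2$ and $[x_i,x_k]$ one has
\[
p_3\equiv x_ix_jx_kx_j+x_kx_jx_ix_j+x_jx_ix_jx_k+x_jx_kx_jx_i-[2]_{q_j}\,x_jx_ix_kx_j\neq 0 .
\]
Its vanishing at $\underline F$ is exactly the extra defining relation \cref{Serre}, which Yamane's presentation needs in addition to $F_j^2=0$. So $p_3(\underline B)$ cannot be obtained from $B_j^2$ by rearranging.

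You can also see the failure directly. The two monomials that do contain $x_j^2$ give $-(q_kB_kF_iB_j^2+q_jB_j^2F_iB_k)$. Insert $B_j^2=c\,E_i$ with $0\neq c\in\C(q)$ from \cref{prop:remp0}, and use \cref{eq:EjBk}. This yields $-c\,[2]_{q_j}B_kF_iE_i$ plus a degree-one term. Here $B_kF_iE_i=E_iB_kB_i-B_k\frac{K_i-K_i^{-1}}{q_i-q_i^{-1}}$ has filtered degree two, so it must be cancelled by exactly the monomials your argument discards. The same objection applies to Case~III and to the mirrored diagrams.

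\textbf{How to repair Step 3.} There are two options.
\begin{enumerate}
\item Do the honest expansion in $\Ualg$, as the paper does. Write $B_j=F_j+G_j$ with $G_j=\va_j[E_i,E_j]_{q_i^{-1}}K_j^{-1}$, write $B_k=F_k+G_k$, and evaluate all eight terms. The inputs are: $[F_i,G_j]_{q_j}$ is a multiple of $\va_jE_jK_i^{-1}K_j^{-1}$; $E_jC+q_jCE_j=0$ for $C=[E_i,E_j]_{q_i^{-1}}$; $E_jF_j+F_jE_j=\frac{K_j-K_j^{-1}}{q_j-q_j^{-1}}$; \cref{Serre} for the pure-$F$ term; and the vanishing of the pure-$G$ term.
\item Stay with \cref{key}, but now use \cref{comultiBi2}. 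The summand $\va_jW_{j,i}K_i\otimes T_i(E_j)K_j^{-1}$ is matched against $F_iF_j$. Also $\mathcal{Z}_j$ is now a nonzero multiple of $E_i$ rather than a toral element. These contributions have to be worked out explicitly.
\end{enumerate}

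\textbf{On Steps 1 and 2.} In Step 1 no ``self-matching'' at $i=\tau i$ can occur, since $x_i$ appears only once in $p_3$. The clean statement is that no choice of summands other than all $K^{-1}$'s gives zero net weight in the second tensor factor.

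Step 2 works and is much shorter than the paper's computation. For $\tau j=i$, the analogous four types of surviving summands reproduce the stated formula exactly, and the other case can be done the same way without any mirror argument.

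One caution for $\tau j=k$. With $G_j=\va_jE_kK_j^{-1}$ and $G_k=\va_kE_jK_k^{-1}$, one finds $[F_i,G_j]_{q_j}=0$ and $[[[F_i,F_j]_{q_j},G_k]_{q_k},F_j]=(-1)^{|i|}q_{\tau j}^{-1}\va_{\tau j}[F_j,F_i]_{q_j}\ck_j$. The projection count then gives
\[
(-1)^{|i|}\bigl(q_{\tau j}^{-1}\va_{\tau j}[B_j,B_i]_{q_j}\ck_j+\va_j[B_j,B_i]_{q_{\tau j}}\ck_{\tau j}\bigr).
\]
This is the opposite of the overall sign displayed in that row of \cref{remp3}, a case the paper does not write out. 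So do not force your signs to match it.
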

\begin{proof}
  By \cref{key}, we have 
    \[
           {\Rem}^{p_3}_{i,j,k}=-(\id\otimes\epsilon)(\id\otimes (P_{-\alpha_i-2\alpha_j-\alpha_k}\circ \pi_{0,0})) (\Delta(p_3(B_i,B_j,B_k))-p_3(B_i,B_j,B_k)\otimes K_{-\alpha_i-2\alpha_j-\alpha_k})
    \]
     Now suppose that $\{i,j,k\}\cap \{\tau i,\tau j,\tau k\}\in \{\varnothing,\{i\},\{k\}\}$. In this case, by \cref{comultiBi} we must have ${\Rem}^{p_3}_{i,j,k}=0$. The proof of the rest cases will be given in the next subsection.
\end{proof}

\subsection{Proof of \cref{prop:remp3}}
In this subsection, we finish the proof of \cref{prop:remp3} through a case-by-case computation. We further divide the proof into three cases, depending on the local Satake diagrams appearing in \cref{remp3}.

\subsubsection{Case I}
Adopting notations from \cref{prop:remp3}. We first consider the local Satake diagram of the following form
\[
\begin{tikzpicture}[anchorbase,scale=.7,xscale=-1]
            \node (-2) [circslash,
                label={below:{$k$}}] at (-2,0) {};
            \node (-1) [circcross,
                label={below:{$j$}}] at (-1,0) {};
             \node (0) [circcross,
                label={below:{$i$}}] at (0,0) {};
            \path (-2) edge (-1)
                    (-1) edge (0);
             \draw[<->, dashed, bend left=15] (-1, .3) to (0, .3); 
             \draw[<-, dashed, bend left=15] (-2, .3) to (-1.5, .5);
        \end{tikzpicture}
\]
In this case, we have $\I_\bu=\varnothing$ and $j=\tau i\in \Iodd$. Recall the Cartan matrix from \cref{Cmatrix}. We need to show that
\begin{equation}
    \label{case1}
    p_3(B_i,B_j,B_k)=[[B_{i},B_{j}]_{q_{j}}, B_{k}]_{q_k},B_{j}]= (-1)^{|k|}
\left(
q_j^{-1}\va_{\tau j}[B_j,B_k]_{q_{\tau j}}\ck_j
+
\va_j[B_j,B_k]_{q_j}\ck_{\tau j}
\right).
\end{equation}
We write $B_t=F_t+G_t$ for $t=i,j,k$, where $G_t:=\va_t E_{\tau t} K_t^{-1}$, and substitute them into the left-hand side of \cref{case1} to obtain a sum of sixteen terms. The two pure terms vanish by the quantum Serre relations \cref{Serre}, i.e.,
\begin{equation}
\label{trivialtermscase1}
    [[F_{i},F_{j}]_{q_{j}}, F_{k}]_{q_k},F_{j}]=0,
    \qquad
   [[G_{i},G_{j}]_{q_{j}}, G_{k}]_{q_k},G_{j}]=0.
\end{equation}
We now record the remaining terms.  
\begin{lem}
\label{lem:case1}
    We calculate that
    \begin{gather*}
        \bigl[\bigl[\,[G_i,F_j]_{q_j},F_k\bigr]_{q_k},F_j\bigr]=(-1)^{|k|}q_j^{-1}\va_i [F_j,F_k]_{q_k}\ck_j, \\
         \bigl[\bigl[\,[F_i,G_j]_{q_j},F_k\bigr]_{q_k},F_j\bigr]=
         (-1)^{|k|}\va_j
    \left(
        [F_j,F_k]_{q_j}\ck_{\tau j}
        -
        [F_j,F_k]_{q_k}K_{\tau j}^{-2}\ck_{\tau j}
    \right),\\
     \bigl[\bigl[\,[F_i,F_j]_{q_j},G_k\bigr]_{q_k},F_j\bigr]=0
    \\
     \bigl[\bigl[\,[F_i,F_j]_{q_j},F_k\bigr]_{q_k},G_j\bigr]= (-1)^{|k|}\va_j [F_j,F_k]_{q_k}K_{\tau j}^{-2}\ck_{\tau j},  \\
    \bigl[\bigl[\,[G_i,G_j]_{q_j},F_k\bigr]_{q_k},F_j\bigr]
    =
    \va_i\va_j(q_j-q_j^{-1})
    \Bigl(
        -(-1)^{|k|}
        [F_j,F_k]_{q_k}
        \bigl(E_jE_i+q_jE_iE_j\bigr)K_i^{-1}K_j^{-1} 
        +
        q_j^{-1}F_kE_iK_i^{-1}
    \Bigr),\\
     \bigl[\bigl[\,[G_i,F_j]_{q_j},G_k\bigr]_{q_k},F_j\bigr]=0, \\
         \bigl[\bigl[\,[G_i,F_j]_{q_j},F_k\bigr]_{q_k},G_j\bigr]
    =
    (-1)^{|k|}\va_i\va_j(q_j-q_j^{-1})
    [F_j,F_k]_{q_k}
    \bigl(E_jE_i+q_jE_iE_j\bigr)K_i^{-1}K_j^{-1},\\
     \bigl[\bigl[\,[F_i,G_j]_{q_j},G_k\bigr]_{q_k},F_j\bigr]
    =
    -(-1)^{|k|}\va_j\va_k(q_j^2-1)
    F_jE_{\tau k}K_k^{-1}\ck_{\tau j},\\
     \bigl[\bigl[\,[F_i,G_j]_{q_j},F_k\bigr]_{q_k},G_j\bigr]=0= \bigl[\bigl[\,[F_i,F_j]_{q_j},G_k\bigr]_{q_k},G_j\bigr],\\
     \bigl[\bigl[\,[G_i,G_j]_{q_j},G_k\bigr]_{q_k},F_j\bigr]
    =
    (-1)^{|k|}q_k\va_i\va_j\va_k
    \bigl(E_iE_{\tau k}-(-1)^{|k|}q_kE_{\tau k}E_i\bigr)
    K_i^{-1}K_k^{-1},\\
     \bigl[\bigl[\,[G_i,G_j]_{q_j},F_k\bigr]_{q_k},G_j\bigr]=0= \bigl[\bigl[\,[G_i,F_j]_{q_j},G_k\bigr]_{q_k},G_j\bigr],\\
      \bigl[\bigl[\,[F_i,G_j]_{q_j},G_k\bigr]_{q_k},G_j\bigr]=
    (-1)^{|k|}\va_j^2\va_k
    \bigl(
        E_iE_{\tau k}-(-1)^{|k|}q_jE_{\tau k}E_i
    \bigr)
    K_iK_j^{-2}K_k^{-1}.
    \end{gather*}
\end{lem}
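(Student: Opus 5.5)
The plan is to verify each of the fourteen identities in \cref{lem:case1} by direct computation inside $\Ualg$. The setting is Case I: $\I_\bu=\varnothing$, $j=\tau i$, $i,j\in\Iodd$, and $k\neq\tau k$. For the parameter data we use $\tau k\notin\{i,j,k\}$ and $\mathcal{Z}$-free expressions $G_t=\va_tE_{\tau t}K_t^{-1}$, so that $G_i=\va_iE_jK_i^{-1}$ and $G_j=\va_jE_iK_j^{-1}$. The basic tools are few.
\begin{itemize}
\item The commutation rules \cref{KEF}, used to move all Cartan factors to the right. Each $K_t^{-1}$ passing an $F_s$ or $E_s$ produces a power $q^{\mp(\alpha_t,\alpha_s)}$, computed from \cref{eq:di} and \cref{Cmatrix}.
\item The relation \cref{skunk}, which says that $E_s$ supercommutes with $F_t$ for $s\neq t$ and gives $[E_s,F_s]=(K_s-K_s^{-1})/(q_s-q_s^{-1})$.
\item The odd nilpotency relations $E_j^2=F_j^2=0$, $E_i^2=F_i^2=0$.
\item The relation \cref{square} for non-adjacent nodes; in particular $[F_i,F_k]=0$, and $E_{\tau k}$ commutes with $F_i,F_j,F_k$ and, when nonadjacent, with the relevant $E$'s.
\end{itemize}

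I organize the computation by the number of $G$'s.

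In the one-$G$ terms, the single $E$ either is $E_j$, coming from $G_i$, or $E_i$, coming from $G_j$. This $E$ must be commuted through the $F$'s until it hits the matching $F$, where it produces a Cartan element via \cref{skunk}. For $[[[G_i,F_j]_{q_j},F_k]_{q_k},F_j]$, the $E_j$ in $G_i$ meets $F_j$ in the innermost bracket. This yields $\va_i\,\frac{K_j-K_j^{-1}}{q_j-q_j^{-1}}K_i^{-1}$ up to a $q$-factor, and the $q_j$-twist should cancel the $K_j^{-1}$ part, leaving $K_jK_i^{-1}=\ck_j$. The outer brackets then become $[F_j,F_k]_{q_k}$ after moving $\ck_j$ right, with the sign $(-1)^{|k|}$ arising from supercommutation. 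The other one-$G$ terms are handled analogously. The term with $G_k$ vanishes because $E_{\tau k}$ supercommutes with everything and the twisted bracket then kills it.

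The two- and three-$G$ terms are more laborious. In them I expand the innermost $[G_i,G_j]_{q_j}$, which gives $E_jE_i+q_jE_iE_j$ times Cartan factors, and then commute the remaining $F$'s across. This produces either the $E$-quadratic leftovers or, after a contraction $[E_i,F_i]$ or $[E_j,F_j]$, an $F_kE_iK_i^{-1}$ term. The zero terms should follow from $E_t^2=0$ for odd $t$, or from a $q$-twist exactly cancelling two orderings. I will check each of these by writing the double-twisted bracket as an explicit four-term sum and comparing coefficients.

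The main obstacle is the bookkeeping of signs and $q$-powers, since $q_i=q^{d_i}$, $q_j$ and $q_k$ differ through $d_j=(-1)^{|\epsilon_j|}$ and \cref{didtaui}. The super signs in the brackets also depend on the parities $|F_jF_k|$ and so on. To control this, I will first tabulate $(\alpha_s,\alpha_t)$ for $s,t\in\{i,j,k,\tau k\}$ and the elementary swap rules such as $K_i^{-1}F_j=q^{(\alpha_i,\alpha_j)}F_jK_i^{-1}$. With that table in hand, each bracket reduces to mechanical normal ordering. The delicate cases are the two-$G$ terms involving $G_i$ and $G_j$ together, where a contraction and an uncontracted quadratic piece coexist. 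I would double-check these by confirming that their sum with the matching term $[[[G_i,F_j]_{q_j},F_k]_{q_k},G_j]$ leaves only the $F_kE_iK_i^{-1}$ part, as the stated formulas predict.
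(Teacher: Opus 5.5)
Your plan is the same direct normal-ordering computation the paper does. It uses the $K$-commutation rules, $[E_t,F_t]=H_t:=(K_t-K_t^{-1})/(q_t-q_t^{-1})$, $E_t^2=F_t^2=0$ for $t\in\{i,j\}$, and $q_i=q_k=q_j^{-1}$ (your table of $(\alpha_s,\alpha_t)$ will show this). The paper organizes the work by nesting depth rather than by the number of $G$'s. It computes $[G_i,F_j]_{q_j}$, $[F_i,G_j]_{q_j}$, $[G_i,G_j]_{q_j}$ and their $q_k$-brackets with $F_k$ and $G_k$ once, then reuses them in the outermost step, which avoids much of the repetition your grouping would incur.

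Your heuristic for the first identity describes the wrong mechanism, and if followed literally it gives $0$. The innermost bracket is not a Cartan element:
\[
[G_i,F_j]_{q_j}=\va_i\bigl(q_j^{-1}H_j+(q_j-q_j^{-1})F_jE_j\bigr)K_i^{-1}.
\]
The $q_j$-twist cancels nothing here. Moreover, if this bracket were a multiple of $\ck_j=K_jK_i^{-1}$, the next step would annihilate it, since $K_jK_i^{-1}F_k=q_kF_kK_jK_i^{-1}$.

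What actually happens is different. The $q_k$-bracket with $F_k$ kills the $K_j$-part of $H_j$ and leaves
\[
-q_j^{-1}\va_iF_kK_j^{-1}K_i^{-1}+(-1)^{|k|}(q_j-q_j^{-1})\va_i[F_j,F_k]_{q_k}E_jK_i^{-1}.
\]
In the outer bracket with $F_j$, the surviving $E_j$ contracts a second time. Using $F_j^2=0$, this gives $(-1)^{|k|}q_j^{-1}\va_i[F_j,F_k]_{q_k}(K_j-K_j^{-1})K_i^{-1}$. Its $K_j^{-1}$-part cancels the contribution $(-1)^{|k|}q_j^{-1}\va_i[F_j,F_k]_{q_k}K_j^{-1}K_i^{-1}$ of the first summand, and what remains is $\ck_j$.

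So the uncontracted $F_jE_j$ piece is essential. Since your stated method is full mechanical normal ordering, the computation will still come out right. Just do not discard uncontracted pieces at the inner levels, here or in the other terms.
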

\begin{proof}
We shall use \cref{skunk} repeatedly, and in this case, we have 
\(
    q_i=q_k=q_j^{-1}
\)
and
\(
    F_j^2=0=E_i^2=E_j^2
\).
Set
\[
    H_r:=\frac{K_r-K_r^{-1}}{q_r-q_r^{-1}},
    \qquad
    C:=E_jE_i+q_jE_iE_j .
\]
A direct computation gives
\[
    [G_i,F_j]_{q_j}
=
    \va_i\bigl(q_j^{-1}H_j+(q_j-q_j^{-1})F_jE_j\bigr)K_i^{-1}, \quad
    [F_i,G_j]_{q_j}
    =
    \va_jH_iK_j^{-1}, \quad
    [G_i,G_j]_{q_j}
    =
    \va_i\va_jC K_i^{-1}K_j^{-1}.
\]
Taking the \(q_k\)-commutator with \(F_k\) or \(G_k\), we obtain
\begin{equation}
\label{ricecake}
\begin{aligned}
    \bigl[\,[G_i,F_j]_{q_j},F_k\bigr]_{q_k}
    &=
    \va_i
    \left(
        -q_j^{-1}F_kK_j^{-1}
        +(q_j-q_j^{-1})(-1)^{|k|}
        [F_j,F_k]_{q_k}E_j
    \right)K_i^{-1},\\
    \bigl[\,[F_i,G_j]_{q_j},F_k\bigr]_{q_k}
    &=
    \va_j(q_j-q_j^{-1})F_kH_iK_j^{-1},\\
    \bigl[\,[G_i,G_j]_{q_j},F_k\bigr]_{q_k}
    &=
    \va_i\va_j(q_j-q_j^{-1})
    F_kC K_i^{-1}K_j^{-1},\\
    \bigl[\,[F_i,G_j]_{q_j},G_k\bigr]_{q_k}
    &=
    -\va_j\va_kE_{\tau k}K_iK_j^{-1}K_k^{-1},\\
    \bigl[\,[G_i,G_j]_{q_j},G_k\bigr]_{q_k}
    &=
    q_k\va_i\va_j\va_k
    \bigl(CE_{\tau k}-q_kE_{\tau k}C\bigr)
    K_i^{-1}K_j^{-1}K_k^{-1}.
\end{aligned}
\end{equation}
Moreover,
\begin{equation}
\label{ricecake2}
        \bigl[\,[F_i,F_j]_{q_j},G_k\bigr]_{q_k}=0,
    \qquad
    \bigl[\,[G_i,F_j]_{q_j},G_k\bigr]_{q_k}=0.
\end{equation}
These two terms vanish because \(E_{\tau k}\) super-commutes with the relevant
\(F\)-terms and \(E\)-terms, and the \(K_k^{-1}\)-factor contributes precisely the
scalar killed by the external \(q_k\)-commutator.

It remains to take the ordinary super-commutator with the last entry. From \cref{ricecake}, one gets
\[
\begin{aligned}
    \bigl[\bigl[\,[G_i,F_j]_{q_j},F_k\bigr]_{q_k},F_j\bigr]
    &=
    (-1)^{|k|}q_j^{-1}\va_i
    [F_j,F_k]_{q_k}\ck_j,\\
    \bigl[\bigl[\,[G_i,F_j]_{q_j},F_k\bigr]_{q_k},G_j\bigr]
    &=
    (-1)^{|k|}\va_i\va_j(q_j-q_j^{-1})
    [F_j,F_k]_{q_k}C K_i^{-1}K_j^{-1}.
\end{aligned}
\]
Indeed, the first identity follows from
\(
    E_jF_j+F_jE_j=H_j\) and \( F_j^2=0\),
while the second one follows from
\[
    [F_kK_j^{-1}K_i^{-1},E_iK_j^{-1}]=0,
    \qquad
    [\, [F_j,F_k]_{q_k}E_jK_i^{-1},E_iK_j^{-1}]
    =
    [F_j,F_k]_{q_k}C K_i^{-1}K_j^{-1}.
\]

Similarly, we get from \cref{ricecake,ricecake2} that
\[
\begin{aligned}
    \bigl[\bigl[\,[F_i,G_j]_{q_j},F_k\bigr]_{q_k},F_j\bigr]
    &=
    (-1)^{|k|}\va_j
    \left(
        [F_j,F_k]_{q_j}\ck_{\tau j}
        -
        [F_j,F_k]_{q_k}K_{\tau j}^{-2}\ck_{\tau j}
    \right),\\
    \bigl[\bigl[\,[F_i,G_j]_{q_j},F_k\bigr]_{q_k},G_j\bigr]
    &=0.
\end{aligned}
\]
Here the second equation follows from the fact that \(H_i\)
commutes with \(E_i\).

Next,
\[
    \bigl[\bigl[\,[F_i,F_j]_{q_j},F_k\bigr]_{q_k},G_j\bigr]
    =
    (-1)^{|k|}\va_j
    [F_j,F_k]_{q_k}K_{\tau j}^{-2}\ck_{\tau j}.
\]
This follows by commuting \(E_i\) through
\([\, [F_i,F_j]_{q_j},F_k]_{q_k}\), using
\(
    [[F_i,F_j]_{q_j},E_i]=F_jK_i^{-1}.
\)

For the terms involving \([G_i,G_j]_{q_j}\), we need to use
\(
    CF_j=F_jC+E_iK_j.
\)
This gives
\[
    \bigl[\bigl[\,[G_i,G_j]_{q_j},F_k\bigr]_{q_k},F_j\bigr]
    =
    \va_i\va_j(q_j-q_j^{-1})
    \Bigl(
        -(-1)^{|k|}
        [F_j,F_k]_{q_k}C K_i^{-1}K_j^{-1}  
        +q_j^{-1}F_kE_iK_i^{-1}
    \Bigr),
\]
whereas
\[
    \bigl[\bigl[\,[G_i,G_j]_{q_j},F_k\bigr]_{q_k},G_j\bigr]=0,
\]
since
\(
    CE_i=q_jE_iC.
\)

It remains to compute the terms whose third entry is \(G_k\). By \cref{ricecake}, we have
\begin{align*}
    \bigl[\bigl[\,[F_i,G_j]_{q_j},G_k\bigr]_{q_k},G_j\bigr]  
    =&
    -\va_j^2\va_k
    \bigl[
        E_{\tau k}K_iK_j^{-1}K_k^{-1},
        E_iK_j^{-1}
    \bigr]                                      \\
 =&
    (-1)^{|k|}\va_j^2\va_k
    \bigl(
        E_iE_{\tau k}-(-1)^{|k|}q_jE_{\tau k}E_i
    \bigr)
    K_iK_j^{-2}K_k^{-1}
\end{align*}
and
\[
\begin{aligned}
\bigl[\bigl[\,[G_i,G_j]_{q_j},G_k\bigr]_{q_k},F_j\bigr]  
 =&
q_k\va_i\va_j\va_k
\bigl[
    CE_{\tau k}-q_kE_{\tau k}C,
    F_j
\bigr]
K_i^{-1}K_j^{-1}K_k^{-1}                         \\
=&
(-1)^{|k|}q_k\va_i\va_j\va_k
\bigl(
    E_iE_{\tau k}-(-1)^{|k|}q_kE_{\tau k}E_i
\bigr)
K_i^{-1}K_k^{-1}.
\end{aligned}
\]
Finally, using \cref{ricecake2} we get
\[
    \bigl[\bigl[\,[G_i,F_j]_{q_j},G_k\bigr]_{q_k},F_j\bigr]
    =
    \bigl[\bigl[\,[G_i,F_j]_{q_j},G_k\bigr]_{q_k},G_j\bigr]
    =
    0.
\]
This concludes the proof.
\end{proof}

Now summing up the two terms in \cref{trivialtermscase1} and the fourteen terms in
\cref{lem:case1}, the terms involving
\[
    [F_j,F_k]_{q_k}
    \bigl(E_jE_i+q_jE_iE_j\bigr)K_i^{-1}K_j^{-1}
\]
cancel. The two terms involving
\(
    [F_j,F_k]_{q_k}K_{\tau j}^{-2}\ck_{\tau j}
\)
also cancel. Hence we get
\[
\begin{aligned}
p_3(B_i,B_j,B_k)
&=
(-1)^{|k|}q_j^{-1}\va_i [F_j,F_k]_{q_k}\ck_j
+
(-1)^{|k|}\va_j [F_j,F_k]_{q_j}\ck_{\tau j} \\
&\quad
+\va_i\va_j(q_j-q_j^{-1})q_j^{-1}F_kE_iK_i^{-1} \\
&\quad
-(-1)^{|k|}\va_j\va_k(q_j^2-1)
F_jE_{\tau k}K_k^{-1}\ck_{\tau j} \\
&\quad
+(-1)^{|k|}q_k\va_i\va_j\va_k
\bigl(E_iE_{\tau k}-(-1)^{|k|}q_kE_{\tau k}E_i\bigr)
K_i^{-1}K_k^{-1} \\
&\quad
+(-1)^{|k|}\va_j^2\va_k
\bigl(E_iE_{\tau k}-(-1)^{|k|}q_jE_{\tau k}E_i\bigr)
K_iK_j^{-2}K_k^{-1}.
\end{aligned}
\]
We now rewrite the remaining terms. Recall that
\[
    G_j=\va_jE_iK_j^{-1},\qquad
    G_k=\va_kE_{\tau k}K_k^{-1},\qquad
    \ck_j=K_jK_i^{-1},\qquad
    \ck_{\tau j}=K_iK_j^{-1}.
\]
A direct calculation gives
\[
\begin{aligned}
    [G_j,F_k]_{q_k}
    &=
    (-1)^{|k|}\va_j(q_j-q_j^{-1})F_kE_iK_j^{-1},\\
    [F_j,G_k]_{q_j}
    &=
    -\va_k(q_j^2-1)F_jE_{\tau k}K_k^{-1},\\
    [G_j,G_k]_{q_k}
    &=
    \va_j\va_k
    \bigl(E_iE_{\tau k}-(-1)^{|k|}q_kE_{\tau k}E_i\bigr)
    K_j^{-1}K_k^{-1},\\
    [G_j,G_k]_{q_j}
    &=
    \va_j\va_k
    \bigl(E_iE_{\tau k}-(-1)^{|k|}q_jE_{\tau k}E_i\bigr)
    K_j^{-1}K_k^{-1}.
\end{aligned}
\]
Therefore the preceding displayed expression becomes
\[
\begin{aligned}
p_3(B_i,B_j,B_k)
&=
(-1)^{|k|}q_j^{-1}\va_i
\bigl(
    [F_j,F_k]_{q_k}
    +
    [G_j,F_k]_{q_k}
    +
    [G_j,G_k]_{q_k}
\bigr)\ck_j \\
&\quad
+
(-1)^{|k|}\va_j
\bigl(
    [F_j,F_k]_{q_j}
    +
    [F_j,G_k]_{q_j}
    +
    [G_j,G_k]_{q_j}
\bigr)\ck_{\tau j}.
\end{aligned}
\]
Finally, we have
\[
    [F_j,G_k]_{q_k}=0,
    \qquad
    [G_j,F_k]_{q_j}=0.
\]
Indeed, the first equality follows from \(q_kq_j=1\), and the second one follows
from the same relation after moving \(K_j^{-1}\) past \(F_k\). Hence
\[
    [F_j,F_k]_{q_k}+[G_j,F_k]_{q_k}+[G_j,G_k]_{q_k}
    =
    [B_j,B_k]_{q_k},\quad
    [F_j,F_k]_{q_j}+[F_j,G_k]_{q_j}+[G_j,G_k]_{q_j}
    =
    [B_j,B_k]_{q_j}.
\]
Consequently,
\[
p_3(B_i,B_j,B_k)
=
(-1)^{|k|}
\left(
    q_j^{-1}\va_i[B_j,B_k]_{q_k}\ck_j
    +
    \va_j[B_j,B_k]_{q_j}\ck_{\tau j}
\right).
\]
We conclude the proof for \cref{case1}. The calculation for the local Satake diagram of the form
\[
 \begin{tikzpicture}[anchorbase,scale=.7]
            \node (-2) [circslash,
                label={below:{$i$}}] at (-2,0) {};
            \node (-1) [circcross,
                label={below:{$j$}}] at (-1,0) {};
             \node (0) [circcross,
                label={below:{$k$}}] at (0,0) {};
            \path (-2) edge (-1)
                    (-1) edge (0);
             \draw[<->, dashed, bend left=15] (-1, .3) to (0, .3); 
             \draw[<-, dashed, bend left=15] (-2, .3) to (-1.5, .5);
        \end{tikzpicture}
\]
is similar and omitted.

\subsubsection{Case II}
Consider the following local Satake diagram:
\[
        	\begin{tikzpicture}[anchorbase,scale=.7]
            \node (-1) [circ,fill=black,
                label={below:{$i$}}] at (-.5,0) {};
            \node (1) [circcross,
                label={below:{$j$}}] at (.5,0) {};
            \node (2) [circ,
                label={below:{$k$}}] at (1.5,0) {};
            \path
                (-1) edge (1)
                (1) edge (2);
        \end{tikzpicture}
\]
In this case, we have $\tau=\id$ and $i\in I_\bu$. Hence by \cref{def:Ui} we have
\[
B_i=F_i,\qquad
B_j=F_j+\va_j T_i(E_j)K_j^{-1},\qquad B_k=F_k+\va_k E_kK_k^{-1}.
\]
We need to show that
\begin{equation}
    \label{case2}
    p_3(F_i,B_j,B_k)=[[F_{i},B_{j}]_{q_{j}}, B_{k}]_{q_k},B_{j}]= q_j^{-1}\va_j B_k K_i^{-1}.
\end{equation}
Since $a_{ij}=-(-1)^{|i|}=-1$ by \cref{Cmatrix}, we define 
\[
G_j:=\va_jT_i(E_j)K_j^{-1}=\va_j(E_iE_j-q_i^{a_{ij}}E_jE_i)K_j^{-1}=\va_j[E_i,E_j]_{q_i^{-1}}K_j^{-1},\qquad G_k=\va_kE_kK_k^{-1}.
\]
After writing $B_r=F_r+G_r$ for $r=j,k$ and substituting them into the left-hand side of \cref{case2}, we obtain a sum of eight terms. We record them as follows.
\begin{lem}
    \label{lem:case2}
    We calculate that
    \begin{gather*}
            \bigl[\bigl[\,[F_i,F_j]_{q_j},F_k\bigr]_{q_k},F_j\bigr]=0,\qquad
                \bigl[\bigl[\,[F_i,F_j]_{q_j},G_k\bigr]_{q_k},F_j\bigr]=0, \\
                    \bigl[\bigl[\,[F_i,G_j]_{q_j},F_k\bigr]_{q_k},F_j\bigr]=\va_j(q_j-q_j^{-1})[F_j,F_k]_{q_k}E_jK_i^{-1}K_j^{-1}
                    +
                        q_j^{-1}\va_jF_kK_i^{-1}(1-K_j^{-2}), \\
             \bigl[\bigl[\,[F_i,F_j]_{q_j},F_k\bigr]_{q_k},G_j\bigr]=-\va_j
            \Bigl(q_j[F_j,F_k]_{q_k}E_j + q_j^{-1}E_j[F_j,F_k]_{q_k}\Bigr)K_i^{-1}K_j^{-1},\\
            \bigl[\bigl[\,[F_i,G_j]_{q_j},G_k\bigr]_{q_k},F_j\bigr]
                    =
                    q_j^{-1}\va_j\va_k E_kK_i^{-1}K_k^{-1},\\
                     \bigl[\bigl[\,[F_i,G_j]_{q_j},F_k\bigr]_{q_k},G_j\bigr]=0,\quad
                      \bigl[\bigl[\,[F_i,F_j]_{q_j},G_k\bigr]_{q_k},G_j\bigr]=0,\quad
                       \bigl[\bigl[\,[F_i,G_j]_{q_j},G_k\bigr]_{q_k},G_j\bigr]=0.
    \end{gather*}
\end{lem}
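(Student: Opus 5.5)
The plan is to compute the eight terms one nesting level at a time, exactly as in the proof of \cref{lem:case1}. First I evaluate the innermost brackets $[F_i,F_j]_{q_j}$ and $[F_i,G_j]_{q_j}$. Then I take the $q_k$-commutator with $F_k$ or $G_k$. Finally I take the super-commutator with $F_j$ or $G_j$.

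\textbf{Tools.} Throughout, the local data are: $i\in \I_\bu$ even, $j$ isotropic odd, $k$ even, $a_{ij}=-1$ and $i\nsim k$. I will use:
\begin{itemize}
\item \cref{skunk} in the form $[E_r,F_s]=\delta_{rs}H_r$ with $H_r=\frac{K_r-K_r^{-1}}{q_r-q_r^{-1}}$;
\item the commutations \cref{KEF};
\item $F_j^2=E_j^2=0$;
\item the fact that $F_i$ commutes with $E_j,E_k$ and that $E_k$ (super)commutes with $F_i$ and $F_j$.
\end{itemize}

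\textbf{Innermost level.} I first compute
\[
[F_i,G_j]_{q_j}=\va_j\bigl[F_i,[E_i,E_j]_{q_i^{-1}}K_j^{-1}\bigr]_{q_j}.
\]
Only the $E_i$ inside $T_i(E_j)$ fails to commute with $F_i$. Replacing $E_iF_i$ by $F_iE_i+H_i$, and then using $K_j^{-1}F_i=q^{\ast}F_iK_j^{-1}$, the $q_j$-twist should cancel everything except a single Cartan term. I expect
\[
[F_i,G_j]_{q_j}=c\,\va_jE_jK_i^{-1}K_j^{-1}
\]
for an explicit scalar $c$, in analogy with the purely even computation $[F_i,T_i(E_j)]\propto E_jK_i^{-1}$.

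\textbf{Second level.} The bracket $[F_i,F_j]_{q_j}$ is left as is. For the second level I record the following.
\begin{itemize}
\item Bracketing $E_jK_i^{-1}K_j^{-1}$ with $F_k$ produces the Cartan term $F_kK_i^{-1}(1-K_j^{-2})$, via $[E_j,F_j]$-type reorderings, together with a term $[F_j,F_k]_{q_k}E_j$. This is the third displayed identity before the final bracket.
\item $[\,[F_i,G_j]_{q_j},G_k]_{q_k}\propto [E_j,E_k]_{q_k}K_i^{-1}K_j^{-1}K_k^{-1}$.
\item $[\,[F_i,F_j]_{q_j},G_k]_{q_k}$ collapses to a term proportional to $[F_i,F_j]_{q_j}E_kK_k^{-1}$, because $E_k$ supercommutes with $F_i$ and $F_j$.
\end{itemize}

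\textbf{Outer level.} The first term vanishes by \cref{Serre}. For the second, the resulting bracket with $F_j$ vanishes because $\bigl[[F_i,F_j]_{q_j},F_j\bigr]=0$ follows from $F_j^2=0$ and the $K$-weights. The remaining zero terms (the sixth through eighth) vanish for similar reasons:
\begin{itemize}
\item each contains $E_j$ twice, and $E_j^2=0$;
\item or $E_i$ meets the weight factor $K_i^{-1}$, which produces exactly the scalar killed by the outer commutator, as in \cref{ricecake2};
\item in the sixth term, $[E_iE_j-q_i^{-1}E_jE_i,E_j]$ vanishes using $E_j^2=0$.
\end{itemize}
The fifth term comes from the $E_k$-part: commuting $E_jE_k$ past $F_j$ yields $H_j$, which together with the $K$-factors gives $q_j^{-1}\va_j\va_kE_kK_i^{-1}K_k^{-1}$.

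\textbf{Main obstacle.} I expect the hardest step to be the fourth term, $\bigl[[[F_i,F_j]_{q_j},F_k]_{q_k},G_j\bigr]$. Here I must move $[E_i,E_j]_{q_i^{-1}}$ through the cubic $F$-expression, and I will proceed in this order:
\begin{itemize}
\item first use $[E_i,[F_i,F_j]_{q_j}]=$ (scalar)$\,F_jK_i^{\pm1}$;
\item then use $[E_j,F_j]=H_j$;
\item at each stage keep careful track of the super-signs from the odd $E_j,F_j$ and of the $q$-powers picked up by $K_i^{-1}K_j^{-1}$.
\end{itemize}
The target is the symmetric combination $q_j[F_j,F_k]_{q_k}E_j+q_j^{-1}E_j[F_j,F_k]_{q_k}$. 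I will check it by confirming that, after adding the third term, all $[F_j,F_k]_{q_k}E_j$-type pieces combine into $[F_j+G_j,F_k+G_k]$-shaped expressions. That combination is what \cref{case2} requires.
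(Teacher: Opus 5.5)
Your overall scheme (expand into eight terms, evaluate level by level) is the same as the paper's. The first and fifth terms are fine, and so are your outlines for the third and fourth. The trouble is the vanishing terms, which is where the real content lies. Several of your stated reasons are false, and the two facts that drive every cancellation here, $q_i=q_j$ and $q_jq_k=1$ (because $i$ is even and $j$ is odd), are never made explicit.

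For the second term, you argue that the outer bracket vanishes because $[[F_i,F_j]_{q_j},F_j]=0$. This is false. Using $F_j^2=0$ one gets $(1-q_j)F_jF_iF_j$, and with the twist actually produced by $K_k^{-1}$ one gets $(1-q_j^2)F_jF_iF_j$. Moreover $F_jF_iF_j\neq 0$ in $\Ualg^-$: by the PBW theorem the corresponding weight space is one-dimensional and spanned by it. The correct reason is that the inner bracket is already zero. Since $K_k^{-1}[F_i,F_j]_{q_j}=q_j[F_i,F_j]_{q_j}K_k^{-1}$, we get $[[F_i,F_j]_{q_j},G_k]_{q_k}=\va_k(1-q_jq_k)[F_i,F_j]_{q_j}E_kK_k^{-1}=0$. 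This is also the only reason the seventh term vanishes, and for that term you gave no specific argument.

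The sixth term has the same problem. $[E_iE_j-q_i^{-1}E_jE_i,E_j]$ does not vanish by $E_j^2=0$; it equals $(1-q_i^{-1})E_jE_iE_j\neq 0$. Set $C=[E_i,E_j]_{q_i^{-1}}$. After moving the $K$-factors, what actually appears is $F_k(E_jC+q_jCE_j)K_i^{-1}K_j^{-2}$. Here $E_jC+q_jCE_j=(1-q_jq_i^{-1})E_jE_iE_j$, which vanishes only because $q_i=q_j$.

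The genuinely missing ingredient is in the eighth term. Neither $E_j^2=0$ nor a weight-factor cancellation kills it. Moving the $K$'s gives
\[
\bigl[\bigl[[F_i,G_j]_{q_j},G_k\bigr]_{q_k},G_j\bigr]=q_j^{-2}\va_j^2\va_k\bigl(DC+CD\bigr)K_i^{-1}K_j^{-2}K_k^{-1},\qquad D=[E_j,E_k]_{q_k}.
\]
Expanding with only $E_j^2=0$ and $E_iE_k=E_kE_i$ gives $DC+CD=-q_j^{-1}[[[E_i,E_j]_{q_j},E_k]_{q_k},E_j]$. So this term vanishes exactly by the $E$-version of the degree-four relation \eqref{Serre}. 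The paper invokes this via \cite[Lem.~4.5(3)]{SW25}, and it is absent from your list of tools.

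There are also some smaller points.
\begin{itemize}
\item At the second level, $[[F_i,G_j]_{q_j},F_k]_{q_k}$ is simply $\va_j(q_j-q_j^{-1})F_kE_jK_i^{-1}K_j^{-1}$. The Cartan piece $F_kK_i^{-1}(1-K_j^{-2})$ only appears after the outer bracket with $F_j$.
\item In the fourth term, $[[F_i,F_j]_{q_j},C]$ also contains $F_iE_iK_j$ and $(K_i-K_i^{-1})K_j$ pieces, coming from $[E_j,[F_i,F_j]_{q_j}]=-q_jF_iK_j$. These die only under the $q_k$-commutator with $F_k$, again because $q_jq_k=1$.
\item Your final consistency check aims at the wrong target. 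Here $\tau=\id$, and no $[B_j,B_k]$-shaped expression arises. Instead, the third and fourth terms combine, via $[F_j,F_k]_{q_k}E_j+E_j[F_j,F_k]_{q_k}=-F_kK_j^{-1}$, into $q_j^{-1}\va_jF_kK_i^{-1}$.
\end{itemize}
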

    \begin{proof}
    By the quantum Serre relation \cref{Serre}, we have
\[
    \bigl[\bigl[\,[F_i,F_j]_{q_j},F_k\bigr]_{q_k},F_j\bigr]=0.
\]
Since \(i\in I_\bullet\) is even and \(i\sim j\), we have
\[
\begin{aligned}
    [F_i,G_j]_{q_j}
    &=
    \va_jE_jK_i^{-1}K_j^{-1}.
\end{aligned}
\]
Hence
\begin{equation}
\label{shrimp1}
\begin{aligned}
    \bigl[\,[F_i,G_j]_{q_j},F_k\bigr]_{q_k}
    &=
    \va_j(q_j-q_j^{-1})F_kE_jK_i^{-1}K_j^{-1},\\
    \bigl[\,[F_i,G_j]_{q_j},G_k\bigr]_{q_k}
    &=
    q_j^{-1}\va_j\va_k[E_j,E_k]_{q_k}
    K_i^{-1}K_j^{-1}K_k^{-1}.
\end{aligned}
\end{equation}
Also, since \(E_k\) super-commutes with both \(F_i\) and \(F_j\), and the
\(K_k^{-1}\)-factor contributes exactly the scalar killed by the outer
\(q_k\)-commutator, we have
\begin{equation}
    \label{shrimp2}
    \bigl[\,[F_i,F_j]_{q_j},G_k\bigr]_{q_k}=0.
\end{equation}
Therefore by \cref{shrimp2} we conclude that
\[
    \bigl[\bigl[\,[F_i,F_j]_{q_j},G_k\bigr]_{q_k},F_j\bigr]=0
    =
    \bigl[\bigl[\,[F_i,F_j]_{q_j},G_k\bigr]_{q_k},G_j\bigr].
\]

We now compute the remaining terms. First, by \cref{shrimp1} we have
\[
\bigl[\bigl[\,[F_i,G_j]_{q_j},F_k\bigr]_{q_k},F_j\bigr]
=
\va_j(q_j-q_j^{-1})
\left(
    q_j^{-1}F_kE_jF_j+F_jF_kE_j
\right)K_i^{-1}K_j^{-1}.
\]
Using
\(
    E_jF_j=-F_jE_j+\frac{K_j-K_j^{-1}}{q_j-q_j^{-1}},
\)
this becomes
\[
\bigl[\bigl[\,[F_i,G_j]_{q_j},F_k\bigr]_{q_k},F_j\bigr]\\ =
\va_j(q_j-q_j^{-1})[F_j,F_k]_{q_k}E_jK_i^{-1}K_j^{-1}
+
q_j^{-1}\va_jF_kK_i^{-1}(1-K_j^{-2}).
\]
Next we show that \(
    \bigl[\bigl[\,[F_i,G_j]_{q_j},F_k\bigr]_{q_k},G_j\bigr]=0
\). 
Put
\[
    C:=[E_i,E_j]_{q_j^{-1}}.
\]
By \cref{shrimp1} we have
\[
\bigl[\bigl[\,[F_i,G_j]_{q_j},F_k\bigr]_{q_k},G_j\bigr]\\ =
\va_j^2(q_j-q_j^{-1})
\bigl[
    F_kE_jK_i^{-1}K_j^{-1},
    CK_j^{-1}
\bigr].
\]
Moving the \(K\)-factors gives
\[
\bigl[
    F_kE_jK_i^{-1}K_j^{-1},
    CK_j^{-1}
\bigr]
=
F_k\bigl(E_jC+q_jCE_j\bigr)K_i^{-1}K_j^{-2}.
\]
But a direct computation, using $E_j^2=0$, shows that
\[
    E_jC+q_jCE_j=0.
\]
Hence we conclude that
\[
    \bigl[\bigl[\,[F_i,G_j]_{q_j},F_k\bigr]_{q_k},G_j\bigr]=0.
\]

For the remaining terms, note that \(K_j^{-1}\) commutes with
\(\bigl[\,[F_i,F_j]_{q_j},F_k\bigr]_{q_k}\), since
\(
    a_{ji}+a_{jj}+a_{jk}=0.
\)
Moreover, we see that
\[
\bigl[
    [F_i,F_j]_{q_j},
    C
\bigr]
 =
-\bigl(q_jF_jE_j+q_j^{-1}E_jF_j\bigr)K_i^{-1},
\]
Hence
\[
\begin{aligned}
\bigl[
    \bigl[\,[F_i,F_j]_{q_j},F_k\bigr]_{q_k},
    C
\bigr]
 =&
-\bigl(q_jF_jE_j+q_j^{-1}E_jF_j\bigr)K_i^{-1}F_k
+
q_kF_k\bigl(q_jF_jE_j+q_j^{-1}E_jF_j\bigr)K_i^{-1}\\
 =&
-
\Bigl(
    q_j[F_j,F_k]_{q_k}E_j
    +
    q_j^{-1}E_j[F_j,F_k]_{q_k}
\Bigr)K_i^{-1}.
\end{aligned}
\]
Therefore
\[
\begin{aligned}
    \bigl[\bigl[\,[F_i,F_j]_{q_j},F_k\bigr]_{q_k},G_j\bigr]
    &=
    -\va_j
    \Bigl(
        q_j[F_j,F_k]_{q_k}E_j
        +
        q_j^{-1}E_j[F_j,F_k]_{q_k}
    \Bigr)K_i^{-1}K_j^{-1}.
\end{aligned}
\]

It remains to compute the terms involving \(G_k\). From \cref{shrimp1}
and
\(
    [[E_j,E_k]_{q_k},F_j]=E_kK_j,
\)
we obtain
\[
\begin{aligned}
    \bigl[\bigl[\,[F_i,G_j]_{q_j},G_k\bigr]_{q_k},F_j\bigr]
    &=
    q_j^{-1}\va_j\va_k E_kK_i^{-1}K_k^{-1}.
\end{aligned}
\]
Finally, the equality
\[
    \bigl[\bigl[\,[F_i,G_j]_{q_j},G_k\bigr]_{q_k},G_j\bigr]=0
\]
follows from \cite[Lem.~4.5(3)]{SW25}. This proves the lemma.
\end{proof}

Summing up the eight terms in \cref{lem:case2}, we get
\begin{equation}
\label{taro2026}
\begin{aligned}
p_3(F_i,B_j,B_k)
&=
\va_j(q_j-q_j^{-1})[F_j,F_k]_{q_k}E_jK_i^{-1}K_j^{-1}
+
q_j^{-1}\va_jF_kK_i^{-1}(1-K_j^{-2}) \\
&\quad
-\va_j
\Bigl(
    q_j[F_j,F_k]_{q_k}E_j
    +
    q_j^{-1}E_j[F_j,F_k]_{q_k}
\Bigr)K_i^{-1}K_j^{-1} \\
&\quad
+
q_j^{-1}\va_j\va_kE_kK_i^{-1}K_k^{-1}.
\end{aligned}
\end{equation}
The first third terms on the right-hand side of \cref{taro2026} combine to
\begin{equation}
\label{taro}
    \begin{aligned}
&-\va_jq_j^{-1}
\Bigl(
    [F_j,F_k]_{q_k}E_j
    +
    E_j[F_j,F_k]_{q_k}
\Bigr)K_i^{-1}K_j^{-1}
+
q_j^{-1}\va_jF_kK_i^{-1}(1-K_j^{-2}).
\end{aligned}
\end{equation}
We calculate that
\[
\begin{aligned}
[F_j,F_k]_{q_k}E_j+E_j[F_j,F_k]_{q_k} 
 =&
(F_jF_k-q_j^{-1}F_kF_j)E_j
+
E_j(F_jF_k-q_j^{-1}F_kF_j) \\
 =&
(E_jF_j+F_jE_j)F_k
-
q_j^{-1}F_k(E_jF_j+F_jE_j)
 \\
=&
\frac{K_j-K_j^{-1}}{q_j-q_j^{-1}}F_k
-
q_j^{-1}F_k\frac{K_j-K_j^{-1}}{q_j-q_j^{-1}}
 =
-F_kK_j^{-1}.
\end{aligned}
\]
Substituting the above equation into \cref{taro}, we get
\[
-\va_jq_j^{-1}
\Bigl(
    [F_j,F_k]_{q_k}E_j
    +
    E_j[F_j,F_k]_{q_k}
\Bigr)K_i^{-1}K_j^{-1}
+
q_j^{-1}\va_jF_kK_i^{-1}(1-K_j^{-2})=
q_j^{-1}\va_jF_kK_i^{-1}.
\]
Hence
\[
\begin{aligned}
p_3(F_i,B_j,B_k)
=
q_j^{-1}\va_jF_kK_i^{-1}
+
q_j^{-1}\va_j\va_kE_kK_i^{-1}K_k^{-1}=
q_j^{-1}\va_jB_kK_i^{-1}.
\end{aligned}
\]
This finishes the proof of \cref{case2}. The calculation for the local Satake diagram of the form
\[
\begin{tikzpicture}[anchorbase,scale=.7]
            \node (-1) [circ,
                label={below:{$i$}}] at (-.5,0) {};
            \node (1) [circcross,
                label={below:{$j$}}] at (.5,0) {};
            \node (2) [circ,fill=black,
                label={below:{$k$}}] at (1.5,0) {};
            \path
                (-1) edge (1)
                (1) edge (2);
        \end{tikzpicture}
\]
is similar and omitted.

\subsubsection{Case III}
At last, we consider local Satake diagrams of the following form
\[
\begin{tikzpicture}[anchorbase,scale=.7]
            \node (-2) [circ,fill=black,
                label={below:{$i$}}] at (-1.5,0) {};
            \node (-1) [circcross,
                label={below:{$j$}}] at (-.5,0) {};
            \node (1) [circcross,
                label={below:{$k$}}] at (.5,0) {};
            \node (2) [circ,fill=black,
                label={below:{}}] at (1.5,0) {};
            \path
                (-2) edge (-1)
                (-1) edge (1)
                (1) edge (2);
        \end{tikzpicture}
\]
We also label the rightmost black node by $l=k+1$ for convenience. In this case, we have $\tau=\id$ and $i,l\in I_\bu$. Hence by \cref{def:Ui} we have
\[
B_i=F_i,\qquad
B_j=F_j+\va_j T_i(E_j)K_j^{-1},\qquad B_k=F_k+\va_k T_l(E_k)K_k^{-1}.
\]
We need to show that
\begin{equation}
    \label{case3}
    p_3(B_i,B_j,B_k)=\bigl[\bigl[\,[B_i,B_j]_{q_j},B_k\bigr]_{q_k},B_j\bigr]
                =
            -q_j^{-1}\va_j B_kK_i^{-1}.
\end{equation}
Since $a_{ij}=-(-1)^{|i|}=-1$ and $a_{lk}=-1$ by \cref{Cmatrix}, we define 
\[
G_j:=\va_jT_i(E_j)K_j^{-1}=\va_j[E_i,E_j]_{q_i^{-1}}K_j^{-1},\qquad G_k=\va_kT_l(E_k)K_k^{-1}=\va_k [E_l,E_k]_{q_l^{-1}}K_k^{-1}.
\]
\begin{lem}
    \label{lem:case3}
    We calculate that
    \begin{gather*}
         \bigl[\bigl[\,[F_i,F_j]_{q_j},F_k\bigr]_{q_k},F_j\bigr]=0,\qquad
                \bigl[\bigl[\,[F_i,F_j]_{q_j},G_k\bigr]_{q_k},F_j\bigr]=0,
                \\
                 \bigl[\bigl[\,[F_i,G_j]_{q_j},F_k\bigr]_{q_k},F_j\bigr]=\va_j(q_j-q_j^{-1})
        [F_j,F_k]_{q_k}E_jK_i^{-1}K_j^{-1}-q_j^{-1}\va_jF_kK_i^{-1}(1-K_j^{-2}),\\
        \bigl[\bigl[\,[F_i,F_j]_{q_j},F_k\bigr]_{q_k},G_j\bigr]=
            -\va_j(q_j-q_j^{-1})[F_j,F_k]_{q_k}E_jK_i^{-1}K_j^{-1}-q_j^{-1}\va_jF_kK_i^{-1}K_j^{-2},\\
            \bigl[\bigl[\,[F_i,G_j]_{q_j},G_k\bigr]_{q_k},F_j\bigr]
                =-q_j^{-1}\va_j\va_k [E_l,E_k]_{q_l^{-1}}K_i^{-1}K_k^{-1},
            \\
            \bigl[\bigl[\,[F_i,G_j]_{q_j},F_k\bigr]_{q_k},G_j\bigr]=0, \qquad
                 \bigl[\bigl[\,[F_i,F_j]_{q_j},G_k\bigr]_{q_k},G_j\bigr]=0,\qquad \bigl[\bigl[\,[F_i,G_j]_{q_j},G_k\bigr]_{q_k},G_j\bigr]=0.
    \end{gather*}
\end{lem}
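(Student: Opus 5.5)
The eight identities in \cref{lem:case3} can be obtained by copying the proof of \cref{lem:case2} and recording only the changes caused by replacing $G_k=\va_kE_kK_k^{-1}$ with $G_k=\va_k[E_l,E_k]_{q_l^{-1}}K_k^{-1}$. We also need the parity data of this diagram. Here $i,l\in\I_\bu$ are even, and $j,k$ are both odd. So $q_i=q_j$ and $q_k=q_l$. Moreover $(\alpha_j,\alpha_k)$ forces $q_k=q_j^{-1}$, exactly as in Case~I. The first identity is the Serre relation \cref{Serre}.

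\textbf{Terms without $G_k$.} For the terms with no $G_k$, I would reuse the formulas $[F_i,G_j]_{q_j}=\va_jE_jK_i^{-1}K_j^{-1}$ and
\[
\bigl[[F_i,G_j]_{q_j},F_k\bigr]_{q_k}=\va_j(q_j-q_j^{-1})F_kE_jK_i^{-1}K_j^{-1}
\]
from \cref{shrimp1}. These only use $i\in\I_\bu$, $i\sim j$ and $[E_j,F_k]=0$.

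Taking the supercommutator with $F_j$ and using $E_jF_j+F_jE_j=H_j$ gives the third identity. Since $k$ is now odd, the sign $(-1)^{|j||k|}$ in the outer commutator changes. This flips the sign of the $F_kK_i^{-1}(1-K_j^{-2})$ term compared with \cref{lem:case2}, and I would check this carefully.

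Similarly, for $[[[F_i,F_j]_{q_j},F_k]_{q_k},G_j]$ I would use $[[F_i,F_j]_{q_j},C]=-(q_jF_jE_j+q_j^{-1}E_jF_j)K_i^{-1}$ with $C=[E_i,E_j]_{q_j^{-1}}$. I would then push $F_k$ through, now with odd-parity signs. The resulting $E_j[F_j,F_k]_{q_k}$ term is rewritten using the computation after \cref{taro}, adapted to odd $k$, to put it in the stated normal form.

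The vanishing of $[[[F_i,G_j]_{q_j},F_k]_{q_k},G_j]$ again follows from $E_jC+q_jCE_j=0$.

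\textbf{Terms with $G_k$.} For the terms containing $G_k$, the key observation is that $E_l$ supercommutes with $F_i$ and $F_j$, and $K_k^{-1}$ acts on $[F_i,F_j]_{q_j}$ by exactly the scalar killed by the $q_k$-commutator. This yields $[[F_i,F_j]_{q_j},G_k]_{q_k}=0$, and hence the two identities that contain it vanish.

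Next I would compute
\[
\bigl[[F_i,G_j]_{q_j},G_k\bigr]_{q_k}=q_j^{-1}\va_j\va_k\bigl[E_j,[E_l,E_k]_{q_l^{-1}}\bigr]_{q_k}K_i^{-1}K_j^{-1}K_k^{-1}.
\]
Then I take the supercommutator with $F_j$. Since $[E_l,F_j]=0$ and $[E_k,F_j]=0$, only the commutator $[E_j,F_j]=H_j$ contributes. After the $K$-factors are moved, it produces $-q_j^{-1}\va_j\va_k[E_l,E_k]_{q_l^{-1}}K_i^{-1}K_k^{-1}$. The sign relative to Case~II again comes from the parity of $k$.

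The last vanishing, $[[[F_i,G_j]_{q_j},G_k]_{q_k},G_j]=0$, is a degree-four positive-part identity in $E_i,E_j,E_k,E_l$. I would deduce it from the $E$-version of the Serre relation \cref{Serre}, or from \cite[Lem.~4.5(3)]{SW25} as in Case~II.

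\textbf{Main obstacle.} I expect the main difficulty to be the sign and $q$-power bookkeeping in the fourth identity, together with the $F_j$-commutator in the fifth. Both involve odd $j,k$ and the relation $q_k=q_j^{-1}$. I would test the coefficients by checking that the final sum gives \cref{case3}. In that sum, the $[F_j,F_k]_{q_k}E_j$ terms must cancel, the $K_j^{-2}$ terms must combine to $-q_j^{-1}\va_jF_kK_i^{-1}$, and this must join with the fifth term to form $-q_j^{-1}\va_jB_kK_i^{-1}$.
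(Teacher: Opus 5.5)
Your overall route matches the paper's: transport the Case~II computation, use \cref{Serre} for the first identity, and use \cite[Lem.~4.5(3)]{SW25} for the last. However, the step for the third identity fails as written.

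\textbf{The third identity.} You reuse $\bigl[[F_i,G_j]_{q_j},F_k\bigr]_{q_k}=\va_j(q_j-q_j^{-1})F_kE_jK_i^{-1}K_j^{-1}$ from \eqref{shrimp1}, on the grounds that it only uses $i\in\I_\bu$, $i\sim j$ and $[E_j,F_k]=0$. It also uses $|k|=\overline{0}$. In Case~III both $X:=[F_i,G_j]_{q_j}=\va_jE_jK_i^{-1}K_j^{-1}$ and $F_k$ are odd. Hence $[X,F_k]_{q_k}=XF_k+q_kF_kX$ and $E_jF_k=-F_kE_j$. Together with $K_j^{-1}F_k=q_jF_kK_j^{-1}$, this gives
\[
\bigl[[F_i,G_j]_{q_j},F_k\bigr]_{q_k}=-\va_j(q_j-q_j^{-1})F_kE_jK_i^{-1}K_j^{-1},
\]
which is the formula the paper uses in \eqref{case3shrimp1}.

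Suppose you keep your version and only change the final bracket with $F_j$ into an ordinary commutator (the element is now even). Then you get $-\va_j(q_j-q_j^{-1})[F_j,F_k]_{q_k}E_jK_i^{-1}K_j^{-1}+q_j^{-1}\va_jF_kK_i^{-1}(1-K_j^{-2})$. This is exactly the negative of the claimed identity. Relative to \cref{lem:case2}, it is the $[F_j,F_k]_{q_k}E_j$ term that changes sign, not the $F_kK_i^{-1}(1-K_j^{-2})$ term. The outcome you assert is reached only when both parity changes are accounted for: the inner $q_k$-bracket and the outer bracket. With your inputs, the final check against \cref{case3} would fail, because the $[F_j,F_k]_{q_k}E_j$ terms would double instead of cancelling.

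\textbf{Smaller points.}

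First, $q_k=q_l$ is false. Since $k$ is odd, $d_l=(-1)^{|\epsilon_{k+1}|}=-d_k$, so $q_l=q_k^{-1}=q_j$. This is harmless here.

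Second, the auxiliary identity $\bigl[[F_i,F_j]_{q_j},C\bigr]=-(q_jF_jE_j+q_j^{-1}E_jF_j)K_i^{-1}$, which the paper also uses, is not literally true. The full bracket carries the extra summand $R=-(q_jE_iF_i-q_j^{-1}F_iE_i)K_j$. For instance, in the vector representation of $\fgl(2|1)$ with $i=1$, $j=2$, the left side acts as $-q(e_{11}+e_{33})$ and the right side as $-qe_{33}-e_{22}$, where $e_{ab}$ are matrix units. Since $RF_k=q_kF_kR$, the summand $R$ drops out of $\bigl[[W,F_k]_{q_k},C\bigr]=-[W,C]F_k+q_kF_k[W,C]$, where $W:=[F_i,F_j]_{q_j}$. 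So the fourth identity survives. For the rewriting step with odd $k$, use the commutator identity $E_j[F_j,F_k]_{q_k}-[F_j,F_k]_{q_k}E_j=-F_kK_j^{-1}$.

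Third, two of your formulas are correct:
\begin{itemize}
\item $[F_i,G_j]_{q_j}=+\va_jE_jK_i^{-1}K_j^{-1}$;
\item $\bigl[[F_i,G_j]_{q_j},G_k\bigr]_{q_k}=+q_j^{-1}\va_j\va_k\bigl[E_j,[E_l,E_k]_{q_l^{-1}}\bigr]_{q_k}K_i^{-1}K_j^{-1}K_k^{-1}$.
\end{itemize}
Combined with $\bigl[[E_j,[E_l,E_k]_{q_l^{-1}}]_{q_k},F_j\bigr]=-[E_l,E_k]_{q_l^{-1}}K_j$ (valid for odd $k$), they yield the fifth identity. The opposite signs printed in the first and third lines of \eqref{case3shrimp1}, and in the paper's formula for this last commutator, cancel each other.
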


\begin{proof}
By the quantum Serre relation \cref{Serre}, we have
\(
    \bigl[\bigl[\,[F_i,F_j]_{q_j},F_k\bigr]_{q_k},F_j\bigr]=0.
\)
Set
\(
    C:=[E_i,E_j]_{q_i^{-1}}.
\)
Since \(q_k=q_j^{-1}\), a direct calculation similar to \cref{lem:case2} gives
\begin{equation}
\label{case3shrimp1}
\begin{aligned}
    [F_i,G_j]_{q_j}
    &=
    -\va_jE_jK_i^{-1}K_j^{-1},\\
    \bigl[\,[F_i,G_j]_{q_j},F_k\bigr]_{q_k}
    &=
    -\va_j(q_j-q_j^{-1})F_kE_jK_i^{-1}K_j^{-1},\\
    \bigl[\,[F_i,G_j]_{q_j},G_k\bigr]_{q_k}
    &=
    -q_j^{-1}\va_j\va_k
    \bigl[E_j,[E_l,E_k]_{q_l^{-1}}\bigr]_{q_k}
    K_i^{-1}K_j^{-1}K_k^{-1}.
\end{aligned}
\end{equation}
Moreover, since \([E_l,E_k]_{q_l^{-1}}\) super-commutes with both
\(F_i\) and \(F_j\), and the \(K_k^{-1}\)-factor contributes exactly the
scalar killed by the \(q_k\)-commutator, we have
\begin{equation}
\label{case3shrimp2}
    \bigl[\,[F_i,F_j]_{q_j},G_k\bigr]_{q_k}=0.
\end{equation}
Therefore
\[
    \bigl[\bigl[\,[F_i,F_j]_{q_j},G_k\bigr]_{q_k},F_j\bigr]=0,
    \qquad
    \bigl[\bigl[\,[F_i,F_j]_{q_j},G_k\bigr]_{q_k},G_j\bigr]=0.
\]

We now compute the remaining terms. From \eqref{case3shrimp1},
\[
\begin{aligned}
\bigl[\bigl[\,[F_i,G_j]_{q_j},F_k\bigr]_{q_k},F_j\bigr]
&=
-\va_j(q_j-q_j^{-1})
\bigl[
    F_kE_jK_i^{-1}K_j^{-1},
    F_j
\bigr] \\
&=
-\va_j(q_j-q_j^{-1})
\left(
    q_j^{-1}F_kE_jF_j+F_jF_kE_j
\right)K_i^{-1}K_j^{-1}.
\end{aligned}
\]
Using
\(
    E_jF_j=-F_jE_j+\frac{K_j-K_j^{-1}}{q_j-q_j^{-1}},
\)
this becomes
\[
\begin{aligned}
\bigl[\bigl[\,[F_i,G_j]_{q_j},F_k\bigr]_{q_k},F_j\bigr]
=
\va_j(q_j-q_j^{-1})
[F_j,F_k]_{q_k}E_jK_i^{-1}K_j^{-1} 
-q_j^{-1}\va_jF_kK_i^{-1}(1-K_j^{-2}).
\end{aligned}
\]

Next, we have 
\[
\begin{aligned}
\bigl[\bigl[\,[F_i,G_j]_{q_j},F_k\bigr]_{q_k},G_j\bigr]
&=
-\va_j^2(q_j-q_j^{-1})
\bigl[
    F_kE_jK_i^{-1}K_j^{-1},
    CK_j^{-1}
\bigr].
\end{aligned}
\]
Moving the \(K\)-factors gives
\[
\bigl[
    F_kE_jK_i^{-1}K_j^{-1},
    CK_j^{-1}
\bigr]
=
F_k\bigl(E_jC+q_jCE_j\bigr)K_i^{-1}K_j^{-2}.
\]
Since
\(
    E_jC+q_jCE_j=0,
\)
we conclude that
\[
    \bigl[\bigl[\,[F_i,G_j]_{q_j},F_k\bigr]_{q_k},G_j\bigr]=0.
\]

It remains to compute the term $\bigl[\bigl[\,[F_i,F_j]_{q_j},F_k\bigr]_{q_k},G_j\bigr]$. The factor \(K_j^{-1}\) commutes with
\(\bigl[\,[F_i,F_j]_{q_j},F_k\bigr]_{q_k}\), since
\(
    a_{ji}+a_{jj}+a_{jk}=0.
\)
Moreover, as in the proof of \cref{lem:case2}, we have
\[
\bigl[
    [F_i,F_j]_{q_j},
    C
\bigr]
=
-\bigl(q_jF_jE_j+q_j^{-1}E_jF_j\bigr)K_i^{-1}.
\]
Therefore
\[
\begin{aligned}
\bigl[
    \bigl[\,[F_i,F_j]_{q_j},F_k\bigr]_{q_k},
    C
\bigr] 
 =&
-\bigl(q_jF_jE_j+q_j^{-1}E_jF_j\bigr)K_i^{-1}F_k
+
q_kF_k\bigl(q_jF_jE_j+q_j^{-1}E_jF_j\bigr)K_i^{-1} \\
 =&
-
\Bigl(
    q_j[F_j,F_k]_{q_k}E_j
    +
    q_j^{-1}E_j[F_j,F_k]_{q_k}
\Bigr)K_i^{-1}.
\end{aligned}
\]
Using
\(
    E_j[F_j,F_k]_{q_k}+[F_j,F_k]_{q_k}E_j=F_kK_j^{-1},
\)
we obtain
\[
\bigl[
    \bigl[\,[F_i,F_j]_{q_j},F_k\bigr]_{q_k},
    C
\bigr]  
 =
-(q_j-q_j^{-1})[F_j,F_k]_{q_k}E_jK_i^{-1}
-q_j^{-1}F_kK_i^{-1}K_j^{-1}.
\]
Hence
\[
\bigl[\bigl[\,[F_i,F_j]_{q_j},F_k\bigr]_{q_k},G_j\bigr]
=
-\va_j(q_j-q_j^{-1})
[F_j,F_k]_{q_k}E_jK_i^{-1}K_j^{-1} 
-q_j^{-1}\va_jF_kK_i^{-1}K_j^{-2}.
\]

From \eqref{case3shrimp1}, the factor
\(K_i^{-1}K_j^{-1}K_k^{-1}\) commutes with \(F_j\). Since
\([E_l,E_k]_{q_l^{-1}}\) super-commutes with \(F_j\), we get
\[
\begin{aligned}
\bigl[\bigl[\,[F_i,G_j]_{q_j},G_k\bigr]_{q_k},F_j\bigr]
&=
-q_j^{-1}\va_j\va_k
\bigl[
    \bigl[E_j,[E_l,E_k]_{q_l^{-1}}\bigr]_{q_k},
    F_j
\bigr]
K_i^{-1}K_j^{-1}K_k^{-1} \\
&=
-q_j^{-1}\va_j\va_k
[E_l,E_k]_{q_l^{-1}}K_jK_i^{-1}K_j^{-1}K_k^{-1} \\
&=
-q_j^{-1}\va_j\va_k
[E_l,E_k]_{q_l^{-1}}K_i^{-1}K_k^{-1}.
\end{aligned}
\]
Here we use
\(
    \bigl[
        \bigl[E_j,[E_l,E_k]_{q_l^{-1}}\bigr]_{q_k},
        F_j
    \bigr]
    =
    [E_l,E_k]_{q_l^{-1}}K_j.
\)
At last, the equality
\[
    \bigl[\bigl[\,[F_i,G_j]_{q_j},G_k\bigr]_{q_k},G_j\bigr]=0
\]
follows from \cite[Lem.~4.5(3)]{SW25}.
This proves the lemma.
\end{proof}
Now we sum up the eight terms in \cref{lem:case3} and get
\[
\begin{aligned}
\bigl[\bigl[\,[B_i,B_j]_{q_j},B_k\bigr]_{q_k},B_j\bigr]
&=
\va_j(q_j-q_j^{-1})
[F_j,F_k]_{q_k}E_jK_i^{-1}K_j^{-1}
-q_j^{-1}\va_jF_kK_i^{-1}(1-K_j^{-2}) \\
&\quad
-\va_j(q_j-q_j^{-1})
[F_j,F_k]_{q_k}E_jK_i^{-1}K_j^{-1}
-q_j^{-1}\va_jF_kK_i^{-1}K_j^{-2} \\
&\quad
-q_j^{-1}\va_j\va_k [E_l,E_k]_{q_l^{-1}}K_i^{-1}K_k^{-1} \\
&=
-q_j^{-1}\va_jF_kK_i^{-1}
-q_j^{-1}\va_j\va_k [E_l,E_k]_{q_l^{-1}}K_i^{-1}K_k^{-1} \\
&=
-q_j^{-1}\va_j
\left(F_k+\va_k [E_l,E_k]_{q_l^{-1}}K_k^{-1}\right)K_i^{-1} \\
&=
-q_j^{-1}\va_j B_kK_i^{-1}.
\end{aligned}
\]
This proves \cref{case3}. The calculation for the local Satake diagram of the form
\[
\begin{tikzpicture}[anchorbase,scale=.7]
            \node (-2) [circ,fill=black,
                label={below:{}}] at (-1.5,0) {};
            \node (-1) [circcross,
                label={below:{$i$}}] at (-.5,0) {};
            \node (1) [circcross,
                label={below:{$j$}}] at (.5,0) {};
            \node (2) [circ,fill=black,
                label={below:{$k$}}] at (1.5,0) {};
            \path
                (-2) edge (-1)
                (-1) edge (1)
                (1) edge (2);
        \end{tikzpicture}
\]
is similar and omitted.\\

\subsection*{Competing interests}~ 
On behalf of all authors, the corresponding author states that there is no conflict of interest.


\bibliographystyle{alphaurl}
\bibliography{sAIII}

@article {AMS25,
    AUTHOR = {Algethami, D. and Mudrov, A. and Stukopin, V.},
     TITLE = {Quantum super-spherical pairs},
   JOURNAL = {J. Algebra},
  FJOURNAL = {Journal of Algebra},
    VOLUME = {674},
      YEAR = {2025},
    NUMBER = {2},
     PAGES = {276--313},
      ISSN = {0021-8693},
   MRCLASS = {17B37},
       DOI = {10.1016/j.jalgebra.2025.03.010},
       URL = {https://doi.org/10.1016/j.jalgebra.2025.03.010},
}

@article {L25,
    AUTHOR = {Lu, K.},
     TITLE = {Twisted super {Y}angians of type {AIII} and their
              representations},
   JOURNAL = {J. Algebra},
  FJOURNAL = {Journal of Algebra},
    VOLUME = {678},
      YEAR = {2025},
     PAGES = {74--132},
      ISSN = {0021-8693,1090-266X},
   MRCLASS = {17B37 (17B10)},
  MRNUMBER = {4897598},
MRREVIEWER = {Run-Qiang\ Jian},
       DOI = {10.1016/j.jalgebra.2025.04.008},
       URL = {https://doi-org.proxy.bib.uottawa.ca/10.1016/j.jalgebra.2025.04.008},
}

@article {L26,
    AUTHOR = {Lu, K.},
     TITLE = {Twisted super Yangians of quasi-split type {A}},
   JOURNAL = {to appear on J. Algebra},
  YEAR = {2026},
  ARCHIVEPREFIX = {arXiv},
    EPRINT = {2607.16594},
}

@article {CL23,
	AUTHOR = {Chen, J. and Luo, L.},
	TITLE = {Multiplication formulas and isomorphism theorem of
	{$\imath$}Schur superalgebras},
	JOURNAL = {J. Pure Appl. Algebra},
	FJOURNAL = {Journal of Pure and Applied Algebra},
	VOLUME = {227},
	YEAR = {2023},
	NUMBER = {3},
	PAGES = {Paper No. 107229, 36},
	ISSN = {0022-4049,1873-1376},
	MRCLASS = {20G43},
	MRNUMBER = {4478371},
	MRREVIEWER = {Mingqiang\ Liu},
	DOI = {10.1016/j.jpaa.2022.107229},
	URL = {https://doi-org.proxy.bib.uottawa.ca/10.1016/j.jpaa.2022.107229},
}

@article{SZ26,
    author = {Shen, Y. and Zhang, W.},
    title = {Relative braid group symmetries on quantum supersymmetric pairs of type s{AIII}},
    year = {2026},
ARCHIVEPREFIX = {arXiv},
    EPRINT = {2605.01696},
}

@article {Kol14,
    AUTHOR = {Kolb, S.},
     TITLE = {Quantum symmetric {K}ac-{M}oody pairs},
   JOURNAL = {Adv. Math.},
  FJOURNAL = {Advances in Mathematics},
    VOLUME = {267},
      YEAR = {2014},
     PAGES = {395--469},
      ISSN = {0001-8708,1090-2082},
   MRCLASS = {17B37 (17B67)},
  MRNUMBER = {3269184},
MRREVIEWER = {Darren\ Funk-Neubauer},
       DOI = {10.1016/j.aim.2014.08.010},
       URL = {https://doi.org/10.1016/j.aim.2014.08.010},
ARCHIVEPREFIX = {arXiv},
    EPRINT = {1207.6036},
}

@article {Let99,
    AUTHOR = {Letzter, G.},
     TITLE = {Symmetric pairs for quantized enveloping algebras},
   JOURNAL = {J. Algebra},
  FJOURNAL = {Journal of Algebra},
    VOLUME = {220},
      YEAR = {1999},
    NUMBER = {2},
     PAGES = {729--767},
      ISSN = {0021-8693},
   MRCLASS = {17B37},
       DOI = {10.1006/jabr.1999.8015},
       URL = {https://doi.org/10.1006/jabr.1999.8015},
}

@incollection {Let02,
    AUTHOR = {Letzter, G.},
     TITLE = {Coideal subalgebras and quantum symmetric pairs},
 BOOKTITLE = {New directions in {H}opf algebras},
    SERIES = {Math. Sci. Res. Inst. Publ.},
    VOLUME = {43},
     PAGES = {117--165},
 PUBLISHER = {Cambridge Univ. Press, Cambridge},
      YEAR = {2002},
   MRCLASS = {17B37},
ARCHIVEPREFIX = {arXiv},
    EPRINT = {math/0103228},
}

@book {Lus10,
    AUTHOR = {Lusztig, G.},
     TITLE = {Introduction to {Q}uantum {G}roups},
    SERIES = {Modern Birkh\"{a}user Classics},
      NOTE = {Reprint of the 1994 edition},
 PUBLISHER = {Birkh\"{a}user/Springer, New York},
      YEAR = {2010},
     PAGES = {xiv+346},
      ISBN = {978-0-8176-4716-2},
   MRCLASS = {17B37 (16T05 17-02 17B35)},
  MRNUMBER = {2759715},
       DOI = {10.1007/978-0-8176-4717-9},
       URL = {https://doi.org/10.1007/978-0-8176-4717-9},
}

@article {SSS25,
    AUTHOR = {Salmasian, H. and Savage, A. and Shen, Y.},
     TITLE = {The disoriented skein and iquantum {B}rauer categories},
   JOURNAL = {Forum Math. Sigma},
  FJOURNAL = {Forum of Mathematics. Sigma},
      YEAR = {2025},
ARCHIVEPREFIX = {arXiv},
    EPRINT = {2507.12328},
      NOTE = {To appear},
}

@article {She25,
    AUTHOR = {Shen, Y.},
     TITLE = {Quantum supersymmetric pairs and {$\imath$}{S}chur duality of
              type {AIII}},
   JOURNAL = {J. Algebra},
  FJOURNAL = {Journal of Algebra},
    VOLUME = {661},
      YEAR = {2025},
     PAGES = {853--903},
      ISSN = {0021-8693,1090-266X},
   MRCLASS = {17B37 (20C08)},
  MRNUMBER = {4793407},
       DOI = {10.1016/j.jalgebra.2024.07.035},
       URL = {https://doi.org/10.1016/j.jalgebra.2024.07.035},
ARCHIVEPREFIX = {arXiv},
    EPRINT = {2210.01233},
}

@article {SW25,
    AUTHOR = {Shen, Y. and Wang, W.},
     TITLE = {Quantum supersymmetric pairs of basic types},
   JOURNAL = {Comm. Math. Phys.},
  FJOURNAL = {Communications in Mathematical Physics},
    VOLUME = {406},
      YEAR = {2025},
    NUMBER = {8},
     PAGES = {Paper No. 187},
      ISSN = {0010-3616,1432-0916},
   MRCLASS = {99-06},
  MRNUMBER = {4927816},
       DOI = {10.1007/s00220-025-05339-w},
       URL = {https://doi.org/10.1007/s00220-025-05339-w},
ARCHIVEPREFIX = {arXiv},
    EPRINT = {2408.02874},
}

@incollection {Wan23, 
    AUTHOR = {Wang, W.},
     TITLE = {Quantum symmetric pairs},
 BOOKTITLE = {I{CM}---{I}nternational {C}ongress of {M}athematicians. {V}ol.
              4. {S}ections 5--8},
     PAGES = {3080--3102},
 PUBLISHER = {EMS Press, Berlin},
      YEAR = {2023},
      ISBN = {978-3-98547-062-4; 978-3-98547-562-9; 978-3-98547-058-7},
   MRCLASS = {17B37 (17B10 20G42)},
  MRNUMBER = {4680353},
MRREVIEWER = {J\"org\ Feldvoss},
ARCHIVEPREFIX = {arXiv},
    EPRINT = {2112.10911},
}

@article {Wat21,
    AUTHOR = {Watanabe, H.},
     TITLE = {Classical weight modules over {$\imath$}quantum groups},
   JOURNAL = {J. Algebra},
  FJOURNAL = {Journal of Algebra},
    VOLUME = {578},
      YEAR = {2021},
     PAGES = {241--302},
      ISSN = {0021-8693,1090-266X},
   MRCLASS = {17B37 (17B10)},
  MRNUMBER = {4234802},
MRREVIEWER = {Iwan\ Praton},
       DOI = {10.1016/j.jalgebra.2021.02.023},
       URL = {https://doi.org/10.1016/j.jalgebra.2021.02.023},
ARCHIVEPREFIX = {arXiv},
    EPRINT = {1912.11157},
}

@article {Yam94,
    AUTHOR = {Yamane, H.},
     TITLE = {Quantized enveloping algebras associated with simple {L}ie
              superalgebras and their universal {$R$}-matrices},
   JOURNAL = {Publ. Res. Inst. Math. Sci.},
  FJOURNAL = {Kyoto University. Research Institute for Mathematical
              Sciences. Publications},
    VOLUME = {30},
      YEAR = {1994},
    NUMBER = {1},
     PAGES = {15--87},
      ISSN = {0034-5318,1663-4926},
   MRCLASS = {17B37 (17B35)},
  MRNUMBER = {1266383},
MRREVIEWER = {Toshiyuki\ Tanisaki},
       DOI = {10.2977/prims/1195166275},
       URL = {https://doi.org/10.2977/prims/1195166275},
}

@article {Yam99,
    AUTHOR = {Yamane, H.},
     TITLE = {On defining relations of affine {L}ie superalgebras and affine
              quantized universal enveloping superalgebras},
   JOURNAL = {Publ. Res. Inst. Math. Sci.},
  FJOURNAL = {Kyoto University. Research Institute for Mathematical
              Sciences. Publications},
    VOLUME = {35},
      YEAR = {1999},
    NUMBER = {3},
     PAGES = {321--390},
      ISSN = {0034-5318,1663-4926},
   MRCLASS = {17B37 (17B67)},
  MRNUMBER = {1710748},
MRREVIEWER = {Naihuan\ Jing},
       DOI = {10.2977/prims/1195143607},
       URL = {https://doi.org/10.2977/prims/1195143607},
ARCHIVEPREFIX = {arXiv},
    EPRINT = {q-alg/9603015},
}

\end{document}